\newtheorem{theorem}{Theorem}[section]
\newtheorem{corollary}[theorem]{Corollary}
\newtheorem{proposition}[theorem]{Proposition}
\newtheorem{lemma}[theorem]{Lemma}
\newtheorem*{problem}{Open Problem}
\numberwithin{equation}{section}
\theoremstyle{definition}
\newenvironment{example}
  {\pushQED{\qed}\examplex}
  {\popQED\endexamplex}
\theoremstyle{remark}
\newtheorem{remark}[theorem]{Remark}
\newcommand{\1}[1]{{\mathbf 1}{\{#1\}}}
\newcommand{\R}{{\mathbb R}}
\newcommand{\Z}{{\mathbb Z}}
\newcommand{\N}{{\mathbb N}}
\newcommand{\ZP}{{\mathbb Z}_+\!}
\newcommand{\RP}{{\mathbb R}_+\!}
\DeclareMathOperator{\Exp}{\mathbb{E}}
\renewcommand{\Pr}{{\mathbb P}}
\DeclareMathOperator{\sign}{sgn}
\newcommand{\tra}{{\scalebox{0.6}{$\top$}}}
\newcommand{\eps}{\varepsilon}
\newcommand{\la}{\lambda}
\newcommand{\re}{{\mathrm{e}}}
\newcommand{\rc}{{\mathrm{c}}}
\newcommand{\ud}{{\mathrm d}}
\newcommand{\cB}{{\mathcal B}}
\newcommand{\cF}{{\mathcal F}}
\newcommand{\cL}{{\mathcal L}}
\newcommand{\cO}{{\mathcal{O}}}
\newcommand{\cR}{{\mathcal R}}
\newcommand{\cS}{{\mathcal S}}
\newcommand{\as}{\ \text{a.s.}}
\newcommand{\Bigmid}{\; \Bigl| \;}
\newcommand{\ba}{{\mathbf{a}}}
\newcommand{\bb}{{\mathbf{b}}}
\newcommand{\bx}{{\mathbf{x}}}
\newcommand{\0}{{\mathbf{0}}}
\newcommand{\HG}{{}_2 F_1 }
\newcommand{\HGG}{{}_4 F_3 }
\newcommand{\fD}{{\mathfrak{D}}}
\newcommand{\Gs}{G^{\rm sym}}
\newcommand{\Rs}{R^{\rm sym}}
\newcommand{\Ro}{R^{\rm one}}
\newcommand{\So}{\cS^{\rm one}}
\newcommand{\Ss}{\cS^{\rm sym}}
\newcommand{\bla}{{\boldsymbol{\lambda}}}
\newcommand{\bmu}{{\boldsymbol{\mu}}}
\newcommand{\bt}{{\boldsymbol{\theta}}}
\newcommand{\ubar}[1]{\underline{#1\mkern-4mu}\mkern4mu }
\def\namedlabel#1#2{\begingroup  
    (#2)%
    \def\@currentlabel{#2}%
    \phantomsection\label{#1}\endgroup
}
\begin{document}

\title{Heavy-tailed random walks\\
 on complexes of half-lines}
\author{Mikhail V.\ Menshikov\footnote{Department of Mathematical Sciences, Durham University, South Road, Durham DH1 3LE, UK.}
\and Dimitri Petritis\footnote{IRMAR, Campus de Beaulieu, 35042 Rennes Cedex, France.}
\and Andrew R.\ Wade\footnotemark[1]}

\date{4 October 2016}
\maketitle

\begin{abstract}
We study a random walk on a complex of finitely many half-lines  joined at a common origin; jumps
are heavy-tailed and of two types, either one-sided (towards the origin) or two-sided
(symmetric). Transmission between half-lines via the origin is governed by an irreducible Markov transition matrix, with associated stationary
distribution $\mu_k$.
If $\chi_k$ is $1$ for one-sided half-lines $k$ and $1/2$ for two-sided half-lines, and $\alpha_k$ is the tail exponent of the jumps on half-line $k$,
we show that the recurrence classification for the case where all $\alpha_k \chi_k \in (0,1)$
is determined by the sign of $\sum_k  \mu_k \cot ( \chi_k \pi \alpha_k )$. In the case of two half-lines,
the model fits naturally on $\R$ and is a version of the \emph{oscillating random walk} of Kemperman.
In that case, the cotangent criterion for recurrence becomes linear in $\alpha_1$ and $\alpha_2$;
our general setting exhibits the essential non-linearity in the cotangent criterion. For the general model, we also show
existence and non-existence of polynomial moments of return times. Our moments results are sharp (and new) for
several cases of the oscillating random walk; they are apparently even new for the case of a homogeneous
random walk on $\R$ with symmetric increments of tail exponent $\alpha \in (1,2)$.
\end{abstract}

\noindent
{\em Key words:}  Random walk, heavy tails, recurrence, transience, passage time moments, Lyapunov functions, oscillating random walk, cotangent criterion.

\medskip

\noindent
{\em AMS Subject Classification:} 60J05 (Primary); 60J10, 60G50  (Secondary)

\section{Introduction}
\label{sec:intro}
 
We study Markov processes on a complex of half-lines $\RP \times \cS$,
where $\cS$ is finite, and all half-lines are connected at a common origin.  
On a given half-line, a particle performs a random walk
with a heavy-tailed increment distribution, until it
would exit the half-line, when it switches (in general, at random) to another half-line to complete its jump.

To motivate the development of the general model, we first discuss informally some examples;
we give formal statements later. 

The \emph{one-sided oscillating random walk}
takes place on two half-lines, which we may map onto $\R$. From the positive half-line, the 
increments are negative with density proportional to $y^{-1-\alpha}$, and from the
negative half-line the increments are positive with density proportional to 
$y^{-1-\beta}$, where $\alpha, \beta \in (0,1)$. The walk is transient if and only if $\alpha + \beta < 1$;
this is essentially a result of Kemperman~\cite{kemperman}. 

The oscillating random walk
has several variations and has been well studied over the years (see e.g.~\cite{rf,sandric0,sandric1,sandric2}).
This previous work, as we describe in more detail below, is restricted to the case of \emph{two} half-lines.
We   generalize this model to an arbitrary number of half-lines, labelled by a finite set $\cS$,
by assigning a rule for travelling from half-line to half-line.

First we describe a deterministic rule.
Let $T$ be a positive integer. Define a \emph{routing schedule of length} $T$ to be a sequence  
 $\sigma = (i_1, \ldots, i_T)$ of $T$ elements of $\cS$, dictating the sequence in which half-lines are visited, as follows.
The walk starts from line $i_1$, and, on departure from line $i_1$ jumps over the origin to $i_2$, and so on,
until departing $i_T$ it returns to $i_1$; on line $k \in \cS$, the walk jumps towards the origin with density proportional
to $y^{-1-\alpha_k}$ where $\alpha_k \in (0,1)$. One simple example takes a cyclic schedule in which $\sigma$
is a permutation of the elements of $\cS$. In any case, a consequence of our results is that now the walk is transient
if and only if 
\begin{equation}
\label{eq:intro_criterion}
\sum_{k \in \cS} \mu_k \cot ( \pi \alpha_k ) > 0,
\end{equation}
where $\mu_k$ is the number of times $k$ appears in the sequence $\sigma$. In particular, in the cyclic case
the transience criterion is $\sum_{k \in \cS}   \cot ( \pi \alpha_k ) > 0$. 

It is easy to see that, if $\cS$ contains
two elements, the \emph{cotangent criterion} \eqref{eq:intro_criterion} 
is equivalent to the previous one for the one-sided oscillating walk ($\alpha_1 + \alpha_2 < 1$).
For more than two half-lines the criterion is non-linear, and it was necessary to extend the model to more
than two lines in order to see the essence of the behaviour.

More generally, we may choose a random routing rule between lines: on departure from half-line $i \in \cS$, the walk jumps
to half-line $j \in \cS$ with probability $p(i,j)$. The deterministic cyclic
routing schedule is a special case in which $p(i, i') =1$ for $i'$ the successor to $i$ in the cycle.
In fact, this set-up generalizes the arbitrary deterministic routing schedule described above, as follows.
Given the schedule sequence $\sigma$ of length $T$, we may convert this to a cyclic schedule
on an extended state-space consisting of $\mu_k$ copies of line $k$, and then reading $\sigma$ as a permutation.
So the deterministic routing
model is a special case of the model with Markov routing, which will be the focus of the rest of the paper.

Our result again will say that \eqref{eq:intro_criterion} is the criterion for transience, where $\mu_k$ is now the
stationary distribution associated to the stochastic matrix $p(i,j)$. Our general model also permits \emph{two-sided}
increments for the walk from some of the lines, which contribute terms involving $\cot ( \pi \alpha_k /2 )$ to the cotangent criterion
\eqref{eq:intro_criterion}.
These two-sided models also generalize previously studied classical models (see e.g.~\cite{kemperman,shepp,rf}).
Again, it is only in our general setting that the essential nature of the cotangent criterion \eqref{eq:intro_criterion} becomes apparent.

Rather than $\RP \times \cS$, one could work on $\ZP \times \cS$ instead, with mass functions replacing
probability densities; the results would be unchanged.

The paper is organized as follows. In Section~\ref{sec:results} we formally define our model and
describe our main results, which as well as a recurrence classification
include results on existence of moments of return times in the recurrent cases. In Section~\ref{sec:oscillating} we explain
how our general model relates to the special case of the oscillating random walk when $\cS$ has two elements, and state 
our results for that model; in this context the recurrence classification results are already known, but the existence
of moments results are new even here, and are in several important cases sharp. 
The present work was also motivated by some problems concerning many-dimensional, partially homogeneous random walks
similar to models studied in~\cite{cp1,cp2}:
we describe this connection in Section~\ref{sec:many-dim}. The main proofs are presented in Sections~\ref{sec:proofs}, \ref{sec:moments},
and~\ref{sec:critical}, the latter dealing with the critical boundary case which is more delicate and requires
additional work. We collect various technical results in Appendix~\ref{sec:appendix}.

\section{Model and results}
\label{sec:results}

Consider  $(X_n, \xi_n ; n \in \ZP)$, a discrete-time, time-homogeneous Markov process 
with state space $\RP \times \cS$, where $\cS$ is a finite non-empty set.
The state space is equipped with the appropriate Borel sets, namely, sets of the form $B \times A$
where $B \in \cB (\RP)$ is a Borel set in $\RP$, and $A \subseteq \cS$.
The process will be described by:
\begin{itemize}
\item an irreducible stochastic matrix labelled
by $\cS$,
$P = ( p(i,j) ; i,j \in \cS )$; and
\item a collection $(w_i ; i \in \cS)$ of probability density functions, so $w_i : \R \to \RP$ is a Borel
function with $\int_\R w_i (y) \ud y =1$.
\end{itemize}
We view $\RP \times \cS$ as
a complex of half-lines $\RP \times \{k\}$, or \emph{branches}, connected at a central origin $\cO := \{0 \} \times \cS$; at time $n$, the coordinate $\xi_n$ describes
which branch the process is on, and $X_n$ describes the distance along that branch at which the process sits. 
We will call $X_n$ a \emph{random walk} on this complex of branches.

To simplify notation, throughout we write $\Pr_{x,i} [ \, \cdot \, ]$ for $\Pr [ \, \cdot \mid (X_0 , \xi_0 ) = (x,i) ]$, the conditional
probability starting from $(x,i) \in \RP \times \cS$; similarly we use $\Exp_{x,i}$ for the corresponding expectation.
The transition kernel of the process is given for $(x,i) \in \RP \times \cS$, for all Borel sets $B \subseteq \RP$ and all $j \in \cS$,  by
\begin{align}
\label{complex-transition}
  \Pr \left[ ( X_{n+1}, \xi_{n+1} ) \in B \times \{ j \} \mid (X_n, \xi_n) = (x , i) \right] &= \Pr_{x,i} \left[ ( X_{1}, \xi_{1} ) \in B \times \{ j \}  \right] \nonumber\\
& {} \hskip -4.7cm {} 
= p (i,j) \int_B w_i (-z-x) \ud z +\1 { i = j } \int_B w_i (z-x) \ud z.  \end{align}
The dynamics of the process represented by \eqref{complex-transition} can be described algorithmically as follows.
Given $(X_n, \xi_n ) = (x,i) \in \RP \times \cS$, generate (independently) a spatial increment $\varphi_{n+1}$
from the distribution given by $w_i$ and a random index $\eta_{n+1} \in \cS$ according to the  
distribution $p(i, \, \cdot \,)$. Then,
\begin{itemize}
\item if $x + \varphi_{n+1} \geq 0$, set $(X_{n+1} , \xi_{n+1}) = (x + \varphi_{n+1} , i)$; or
\item if $x + \varphi_{n+1} < 0$, set $(X_{n+1}, \xi_{n+1} ) = (| x + \varphi_{n+1} | , \eta_{n+1} )$.
\end{itemize}
In words, the walk takes a $w_{\xi_n}$-distributed step. If this step would bring the walk beyond the origin,
it passes through the origin and switches onto branch $\eta_{n+1}$
(or, if $\eta_{n+1}$ happens to be equal to $\xi_n$, it reflects back along the same branch).

The finite 
irreducible stochastic matrix $P$ is associated with a (unique)
positive invariant probability distribution $(\mu_k ; k \in \cS)$ satisfying 
\begin{equation}
\label{mu_system}
 \sum_{j \in \cS} \mu_j p (j,k) - \mu_k =0 , \text{ for all } k \in \cS .\end{equation}
For future reference, we state the following.
\begin{description}
    \item[\namedlabel{ass:basic}{A0}] 
Let $P = ( p(i,j) ; i,j \in \cS )$ be an irreducible stochastic matrix, and let $(\mu_k ; k \in \cS)$
denote the corresponding invariant distribution.
\end{description}

Our interest here is when the $w_i$ are \emph{heavy tailed}.
We allow two classes of distribution for the $w_i$: \emph{one-sided} or \emph{symmetric}.
It is convenient, then, to partition $\cS$ as $\cS = \So \cup \Ss$ where $\So$ and $\Ss$ are disjoint
sets, representing those branches on which the walk takes, respectively, one-sided and symmetric jumps.
The $w_k$ are then described by
a collection of
positive parameters $(\alpha_k ; k \in \cS)$.

For a probability density function $v: \R \to \RP$, an exponent $\alpha \in (0,\infty)$, 
and a constant $c \in (0,\infty)$,
we write $v \in \fD_{\alpha,c}$ to mean
that there exists $c : \RP \to (0,\infty)$ with $\sup_{y} c(y) < \infty$ and $\lim_{y \to \infty} c  (y) = c$ for which
  \begin{equation}
\label{vdef}
 v (y) = \begin{cases} c  (y) y^{-1-\alpha} & \text{ if } y > 0 \\
0 & \text{ if } y \leq 0 . \end{cases}
\end{equation}

If $v \in \fD_{\alpha,c}$ is such that
\eqref{vdef} holds and $c(y)$ satisfies the stronger condition $c(y) = c + O (y^{-\delta})$
for some $\delta>0$, then we write $v \in \fD_{\alpha,c}^+$.

Our assumption on the increment distributions $w_i$ is as follows.
\begin{description}
    \item[\namedlabel{ass:tails}{A1}] 
Suppose that, for each $k \in \cS$,  we have an exponent 
$\alpha_k \in (0,\infty)$, a constant $c_k \in (0,\infty)$, and a density function $v_k \in \fD_{\alpha_k,c_k}$.
Then suppose that, for all $y \in \R$, $w_k$ is given by 
\begin{equation}
\label{wdef}
w_k (y)  = \begin{cases} v_k (-y) & \text{if } k \in \So \\
                  \frac{1}{2} v_k (|y|) & \text{if } k \in \Ss .\end{cases}
\end{equation}
\end{description}

We say that $X_n$ is \emph{recurrent} if $\liminf_{n \to \infty} X_n = 0$, a.s., and \emph{transient} if $\lim_{n \to \infty} X_n = \infty$, a.s.
An irreducibility argument shows that our Markov chain $(X_n,\xi_n)$ displays the usual recurrence/transience dichotomy
and exactly one of these two situations holds; however, our proofs establish this behaviour directly using semimartingale arguments, 
and so we may avoid discussion of irreducibility here.

Throughout we define, for $k \in \cS$,
\[ \chi_k := \frac{ 1 + \1 { k \in \So } }{2} = \begin{cases} 
\frac{1}{2} & \text{ if } k \in \Ss;\\
1  & \text{ if } k \in \So.\end{cases}  \]

Our first main result gives a recurrence classification for the process.

\begin{theorem}
\label{thm:recurrence}
Suppose that \eqref{ass:basic} and \eqref{ass:tails} hold.  
\begin{itemize}
\item[(a)] Suppose that $\max_{k \in \cS} \chi_k \alpha_k \geq 1$. Then $X_n$ is recurrent.
\item[(b)] Suppose that $\max_{k \in \cS} \chi_k \alpha_k < 1$.
\begin{itemize}
\item[(i)] 
If $\sum_{k \in \cS} \mu_k \cot ( \chi_k \pi \alpha_k ) < 0$, then  $X_n$  is recurrent.
\item[(ii)]
If $\sum_{k \in \cS} \mu_k \cot ( \chi_k \pi \alpha_k ) > 0$, then $X_n$  is transient.
\item[(iii)] Suppose  
in addition that the densities $v_{k}$ of \eqref{ass:tails}
belong to $\fD_{\alpha_k,c_k}^+$ for each $k$. Then
$\sum_{k \in \cS} \mu_k \cot ( \chi_k \pi \alpha_k ) = 0$ implies that $X_n$  is recurrent.
\end{itemize}
\end{itemize}
\end{theorem}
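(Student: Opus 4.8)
The plan is to use the Lyapunov function method for recurrence/transience of Markov chains, via a suitably chosen test function $f : \RP \times \cS \to \RP$ and Foster--Lyapunov criteria (the semimartingale criteria of Lamperti type, as collected in the appendix). The key heuristic is that on branch $k$ the walk behaves like a Lamperti-type process whose natural scale is governed by the tail exponent $\chi_k \alpha_k$; the choice of exponent in the test function must interpolate correctly across branches, and the constant $\sum_k \mu_k \cot(\chi_k \pi \alpha_k)$ emerges from averaging the one-step mean drift of a power-law function against the stationary distribution $\mu$. Concretely, I would look for $f(x,k) = x^{\gamma} (1 + \varepsilon g(k))$ for a small parameter $\varepsilon$, a correction function $g$ on $\cS$, and an exponent $\gamma$ to be tuned; the correction $g$ is there precisely to absorb the branch-switching terms. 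For part (a), $\max_k \chi_k \alpha_k \geq 1$, one expects recurrence from a logarithmic or sublinear test function: when some branch has $\chi_k\alpha_k \geq 1$ the walk gets pulled back strongly enough (in expectation) that $f(x,k) = \log(1+x)$ or a small power $x^\beta$ works as a supermartingale outside a compact set, with the dominant branch controlling the estimate.

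**The main computational input is the asymptotic one-step drift of a power function on a single half-line.** For a density $v \in \fD_{\alpha,c}$ with $\alpha\chi \in (0,1)$ and $f(x) = x^\gamma$, one computes $\Exp_{x,k}[f(X_1) ] - f(x)$ as $x \to \infty$. The heavy tail means that for a one-sided branch the walk makes large jumps toward the origin, and the expectation $\Exp[(x - Y)^\gamma_+ - x^\gamma]$ with $Y$ having density $\sim c y^{-1-\alpha}$ on $(0,\infty)$ behaves, after scaling $Y = xu$, like $x^{\gamma - \alpha}$ times a Beta-type integral. The emergence of the cotangent: the relevant integral $\int_0^\infty ((1-u)_+^\gamma - 1) u^{-1-\alpha}\, \ud u$, when combined with the ``reflection'' contribution $\int_1^\infty (u-1)^\gamma u^{-1-\alpha}\,\ud u$ for one-sided branches (the part of the jump that overshoots the origin and lands at $|x-Y|$), evaluates via Beta-function / Euler reflection-formula identities to something proportional to $\cot(\pi\alpha\chi) - \cot(\pi\gamma)$ (up to $\gamma\leftrightarrow$ sign conventions); for symmetric branches the analogous computation with the two-sided density produces $\cot(\pi\alpha\chi)$ with $\chi = 1/2$. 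Choosing $\gamma$ near $0$, so $\cot(\pi\gamma) \to +\infty$, would kill the criterion, so instead I expect $\gamma$ to be chosen \emph{equal} to a common value determined implicitly and the sign of $\sum_k \mu_k \cot(\chi_k\pi\alpha_k)$ to appear after averaging; more precisely, take $f(x,k) = x^\gamma(1+\varepsilon g(k))$, compute $\Exp[f(X_1,\xi_1)] - f(x,k)$ to leading order $x^{\gamma - \text{(something)}}$, and solve the linear system for $g$ (a Poisson equation for the transition matrix $P$) so that the surviving term is $\varepsilon$-times a constant multiple of $\sum_j \mu_j \cot(\chi_j \pi\alpha_j)$ plus harmless lower-order terms.

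**For parts (b)(i) and (b)(ii):** once the drift asymptotics are in hand, if $\sum_k \mu_k \cot(\chi_k\pi\alpha_k) < 0$ one arranges $f$ so that $\Exp[f(X_1,\xi_1)\mid (X_0,\xi_0) = (x,k)] \leq f(x,k)$ for all $x$ large and all $k$, i.e.\ $f$ is a supermartingale off a compact set; since $f$ is bounded on compacts and $f(x,k)\to\infty$ (or one uses a bounded modification $\min(f, N)$ and lets $N\to\infty$, or the standard criterion with $f\to\infty$), recurrence follows from the appendix's recurrence criterion. Symmetrically, if the sum is positive, one uses a test function of the form $f(x,k) = 1 - (1+x)^{-\gamma}(1+\varepsilon g(k))$ (bounded, tending to $1$) and shows it is a supermartingale off a compact set with $f$ bounded away from its supremum on the compact set, giving transience. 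The delicate point in both is uniformity in $k \in \cS$, but since $\cS$ is finite this reduces to finitely many one-branch estimates plus the bookkeeping of the switching terms.

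**The main obstacle I anticipate is the critical case (b)(iii)**, $\sum_k \mu_k \cot(\chi_k\pi\alpha_k) = 0$, which is exactly why the paper isolates Section~\ref{sec:critical} and needs the stronger regularity $v_k \in \fD^+_{\alpha_k,c_k}$. When the leading-order drift vanishes, one must extract the next-order term: the test function $x^\gamma$ must be refined to something like $x^\gamma (\log x)^{\beta}$ or $x^\gamma + b\, x^{\gamma'}$ with a second exponent, and one needs the $O(y^{-\delta})$ control on $c(y)$ to guarantee that the error terms from the tail's non-asymptotic part are genuinely lower order than the logarithmic correction. This is a classical Lamperti-at-criticality phenomenon (à la Lamperti's $\log$-type test functions for zero-drift Bessel-like processes), but here it is complicated by (i) the multi-branch structure, so the correction $g(k)$ must be re-solved at the next order against a Poisson equation that may now have a solvability condition, and (ii) the heavy tails, which make the truncation arguments and the interchange of the $x\to\infty$ limit with the $\cS$-average more subtle. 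I would handle it by first establishing a sharp two-term expansion of the single-branch drift of $x^\gamma$ and of $x^\gamma \log x$ under $\fD^+$, then choosing $f(x,k) = x^{\gamma}(\log x)(1+\varepsilon g_1(k)) + x^\gamma(1+\varepsilon g_0(k))$ — or rather the bounded analogue — and verifying the supermartingale inequality; the sign that gives recurrence should come out automatically because the subleading cotangent-type coefficient is of a definite sign at criticality (this is where an explicit computation with the Euler reflection formula is unavoidable).
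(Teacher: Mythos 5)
Your plan for parts (b)(i) and (b)(ii) is essentially the paper's proof: the test function is $f_\nu(x,k)=\lambda_k(1+x)^\nu$ with $\lambda_k=1+\theta_k\nu$, the one-branch drift expansion yields $-1+\nu\pi\cot(\chi_k\pi\alpha_k)+o(\nu)$ via Beta-function and digamma reflection identities, and $\theta$ is chosen to solve the Poisson equation $(P-I)\bt=\bb$, whose solvability condition $\sum_k\mu_k b_k=0$ (Fredholm alternative, Lemma~\ref{lem:finding-theta}) is exactly where the $\mu$-average of the cotangents enters; one then takes $\nu>0$ small for recurrence and $\nu<0$ small for transience and applies the semimartingale criteria. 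That part of your proposal is sound.

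There are, however, two genuine gaps. For part (a), the heuristic that ``the dominant branch controls the estimate'' fails: on a branch with $\chi_k\alpha_k<1$ the drift of $(1+x)^\nu$ is of order $x^{\nu-\alpha_k}$ and can be strictly \emph{positive} when $\cot(\chi_k\pi\alpha_k)>0$, and no sign condition on $\sum_k\mu_k\cot(\chi_k\pi\alpha_k)$ is assumed in (a). The Poisson-equation device is unavailable here, both because the solvability condition need not hold and because the drifts on the branches in $\cS_0:=\{k:\chi_k\alpha_k\ge1\}$ are of a different (more strongly negative) order, so they cannot be matched term-by-term with the others. The paper's Proposition~\ref{pro:recurrence-i} resolves this by partitioning $\cS$ into shells $\cS_0,\cS_1,\dots,\cS_M$ according to $P$-distance from $\cS_0$ and choosing the weights $\lambda_k$ constant on shells and strictly increasing with that distance, via the recursive triangular construction of Lemma~\ref{lem:algebraic2}; the expected gain from switching toward $\cS_0$ then beats the outward drift on the intermediate branches. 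Some mechanism of this kind is needed and is absent from your sketch.

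For part (b)(iii), your plan relies on extracting a ``definite-sign subleading cotangent-type coefficient'' from a two-term expansion of the drift. This is precisely what the hypothesis $v_k\in\fD^+_{\alpha_k,c_k}$ does \emph{not} deliver: after the leading term is cancelled by solving the Poisson equation exactly, the drift of the logarithmic function is only $O(x^{-\alpha_i-\eta})$ of \emph{unknown sign} (see the discussion following~\eqref{eq:g-critical}). The paper's fix is structural rather than computational: it uses $h(x,k)=(\log(1+x)+\lambda_k)^{1/2}$ with additive weights, and the concavity of the square root applied to the jump-over-the-origin term produces a definite negative contribution of order $x^{-\alpha_i}\log^{-3/2}x$ (Lemma~\ref{lem:h-vs-g}), which dominates the sign-unknown polynomial error. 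A refinement of the form $x^\gamma\log x$ plus corrections, as you propose, would not generate this gain, and the claim that the second-order coefficient ``is of a definite sign at criticality'' is unsubstantiated.
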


In the recurrent cases, it is of interest to quantify recurrence via existence or non-existence
of passage-time moments. For $a > 0$, let $\tau_a := \min \{ n \geq 0 : X_n \leq a \}$, where throughout the paper we adopt
 the usual convention that $\min \emptyset := +\infty$.
The next result shows that in all the recurrent cases, excluding the boundary case in Theorem~\ref{thm:recurrence}(b)(iii),
the tails of $\tau_a$ are \emph{polynomial}.

\begin{theorem}
\label{thm:moments}
Suppose that~\eqref{ass:basic} and~\eqref{ass:tails} hold.  In  cases (a) and (b)(i)
of Theorem~\ref{thm:recurrence}, there exists $0<q_\star<\infty$ such that for all $x>a$ and all  $k\in\cS$, 
\[\Exp_{x,k}[ \tau_a^{q} ] < \infty, \ \textrm{ for } q<q_\star \ \text{ and } \Exp_{x,k}[ \tau_a^{q} ] = \infty, \ \text{ for } q>q_\star.\]
\end{theorem}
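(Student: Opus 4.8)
The plan is to establish matching upper and lower bounds on the polynomial moments of $\tau_a$ by constructing suitable Lyapunov functions of the form $f(x,k) = (x + C)^{q}$ (or, more carefully, $x^q$ plus lower-order corrections depending on the branch label $k$), and then applying the standard semimartingale criteria for existence and non-existence of passage-time moments (e.g.\ the results of Aspandiiarov--Iasnogorodski--Menshikov, or Menshikov--Popov--Wade). The key input is a sharp computation of the one-step mean drift $\Exp_{x,k}[ f(X_1,\xi_1) - f(x,k) ]$ for large $x$. For a heavy-tailed jump of exponent $\chi_k \alpha_k \in (0,1)$, a test function $x^q$ has drift of order $x^{q - \chi_k\alpha_k}$ when $q < \chi_k\alpha_k$, and one expects the leading constant to involve exactly the quantity $\mu_k \cot(\chi_k \pi \alpha_k)$ after averaging over the branch label using the stationary distribution $\mu$. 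Concretely, I would first do this computation on a single half-line (fixed $k$), then handle the averaging over $\cS$ by adding to $x^q$ a bounded-in-$x$ but $k$-dependent term $g(k) x^{q'}$ (with $q' < q$) chosen to solve a Poisson-type equation for $P$, so that the drift of the combined function has a sign governed by $\sum_k \mu_k \cot(\chi_k\pi\alpha_k)$ — which in cases (a) and (b)(i) is strictly negative (interpreting $\cot(\chi_k\pi\alpha_k) = -\infty$, heuristically, when $\chi_k\alpha_k \ge 1$, so that case (a) is "deeply" recurrent and any polynomial moment exists up to some positive threshold).

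The upper bound on moments (existence of $\Exp_{x,k}[\tau_a^q]$ for $q < q_\star$) follows by the Foster--Lyapunov-type argument: if one can find $f \geq 0$ with $\Exp_{x,k}[f(X_1,\xi_1)] - f(x,k) \leq -c\, f(x,k)^{1 - 1/p}$ outside a bounded set, then $\Exp_{x,k}[\tau_a^{p}] < \infty$; taking $f \asymp x^{\beta}$ for appropriate $\beta$ relates $p$ to $\beta$ and to the drift exponent. For the lower bound (non-existence for $q > q_\star$), I would use the complementary criterion: if there is a function $h$ with $\Exp_{x,k}[h(X_1,\xi_1)] - h(x,k) \geq -c\, h(x,k)^{1-1/p}$ and controlled jumps, then $\Exp_{x,k}[\tau_a^p] = \infty$ — but because the increments are heavy-tailed, the "controlled jumps" hypothesis fails, and instead one must use the non-existence criterion tailored to heavy tails, exploiting that large jumps occur with polynomially small probability (this is exactly where the tail assumption~\eqref{ass:tails} enters quantitatively, via $\Pr_{x,k}[X_1 > x + t] \asymp t^{-\alpha_k}$ for $k\in\So$ and similarly for $k \in \Ss$). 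The value of $q_\star$ will be identified explicitly in terms of the $\alpha_k$, the $\chi_k$, the $\mu_k$, and the sign/magnitude of the cotangent sum; I expect $q_\star$ to be something like the largest $q$ for which the averaged drift of $x^q$ is still negative, which ties it to a root of an equation involving $\sum_k \mu_k \cot(\chi_k\pi\alpha_k)$ against the contributions from the drift exponents.

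The main obstacle, I expect, will be the precise drift computation with the $k$-dependent correction term: one needs the correction $g(k)x^{q'}$ to be chosen so that its contribution, after the particle crosses the origin and resamples its branch according to $p(i,\cdot)$, cancels the "fluctuating" (non-$\mu$-averaged) part of the leading drift, leaving only the $\mu$-weighted cotangent sum. This requires solving $\sum_j p(k,j) g(j) - g(k) = (\text{known function of } k)$, which is solvable precisely because the right-hand side will be arranged to have zero $\mu$-average — and verifying that zero-average condition is itself the content of the cotangent identity. A secondary technical difficulty is controlling the error terms uniformly: the $o(1)$ in $c_k(y) \to c_k$ (or the $O(y^{-\delta})$ in the $\fD^+$ case, though that is only needed for the critical case handled elsewhere) must be integrated against $x^q$-type weights without destroying the sign of the leading term, which forces the restriction to $q$ strictly below the critical exponent and is the reason the theorem is stated with strict inequalities $q < q_\star$ and $q > q_\star$ rather than addressing $q = q_\star$.
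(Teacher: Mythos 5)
Your overall strategy---power-type Lyapunov functions with branch-dependent weights fixed by a Poisson-type equation for $P$, the Aspandiiarov--Iasnogorodski--Menshikov criteria for existence of moments, and a heavy-tail-adapted argument for non-existence---is essentially the paper's. The paper takes the supermartingale estimates $Df_\nu(x,i)\le-\eps x^{\nu-\bar\alpha}$ ready-made from Propositions~\ref{pro:main-supermartingale} and~\ref{pro:recurrence-i} (already established for the recurrence classification), feeds $Y_n=f_\nu(X_n,\xi_n)$ into Lemma~\ref{lem:AIM} to get $\Exp_{x,k}[\tau_a^p]<\infty$ for $p<\nu/\bar\alpha$, and for non-existence uses a ``one big jump'' estimate: a jump to distance $y$ has probability $\gtrsim y^{-\bar\alpha}$ and is followed, with uniformly positive probability, by a return time of order $y^\beta$, giving a polynomial lower bound on the tail of $\tau_a$.

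There are, however, two genuine gaps in your plan as written. First, you aim to identify $q_\star$ explicitly via \emph{matching} upper and lower bounds; this is not achievable by these methods in the general setting (the remark following the theorem states that locating $q_\star$ sharply is an open problem, resolved only in special two-line cases). What the argument actually yields is a pair $0<q_0<q_1<\infty$ with finiteness for $q<q_0$ and infiniteness for $q>q_1$. To deduce the theorem from such non-matching bounds you need the additional, easy but essential, observation that $q\mapsto\Exp_{x,k}[\tau_a^q]$ is nondecreasing, so that $q_\star:=\sup\{q:\Exp_{x,k}[\tau_a^q]<\infty\}\in[q_0,q_1]$ has the stated dichotomy; your proposal omits this step, and without it the conclusion as stated does not follow. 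Second, your treatment of case~(a) by ``interpreting $\cot(\chi_k\pi\alpha_k)=-\infty$'' is only a heuristic: for branches with $\chi_k\alpha_k\ge1$ the expansion $R^\flat(\alpha_k,\nu)=-1+\nu\pi\cot(\chi_k\pi\alpha_k)+o(\nu)$ is not available, and the paper needs a genuinely different construction of the weights there (Proposition~\ref{pro:recurrence-i}, built on the partition $\cS_0,\dots,\cS_M$ and the triangular-matrix Lemma~\ref{lem:algebraic2}); if you intend to reuse that already-proved drift estimate you should say so explicitly rather than redo the computation. A smaller slip worth correcting: the drift of $x^q$ on branch $k$ is of order $x^{q-\alpha_k}$, not $x^{q-\chi_k\alpha_k}$---the factor $\chi_k$ enters the leading constant, not the exponent---which changes the value of $q_0$ you would extract from Lemma~\ref{lem:AIM} (namely $q_0=\nu/\bar\alpha$ with $\bar\alpha=\max_k\alpha_k$).
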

\begin{remark}
One would like to precisely locate $q_\star$; here we only locate $q_\star$ within an interval, i.e., we show that there exist $q_0$ and $q_1$ with $0<q_0<q_1<\infty$ such that   $\Exp_{x,k} [ \tau_a^{q} ] < \infty$ for $q<q_0$ and $\Exp_{x,k} [ \tau_a^{q} ] = \infty$ for $q>q_1$. The existence of a critical $q_\star$ follows from the monotonicity of the map $q\mapsto \Exp_{x,k} [ \tau_c^{q} ]$, but obtaining a sharp estimate of its value remains an open problem in the general case.
\end{remark}

We do have sharp results
in several particular cases for \emph{two} half-lines, 
in which case our model reduces to the \emph{oscillating random walk}
considered by Kemperman~\cite{kemperman} and others.
We present these sharp moments results (Theorems~\ref{thm:symmetric_random_walk_moments} and~\ref{thm:one-sided_moments})
in the next section,
which discusses in detail the case of the  oscillating random walk,
and also describes how our recurrence results relate to the known results for this
classical model.

\section{Oscillating random walks and related examples}
\label{sec:oscillating}

\subsection{Two half-lines become one line}

In the case of our general model in which $\cS$ consists of two elements,
$\cS = \{ -1, +1\}$, say, it is natural and convenient
to represent our random walk on the whole real line $\R$. Namely, if 
$\omega (x, k) := k x$
for $x \in \RP$ and $k = \pm1$, we let $Z_n = \omega (X_n, \xi_n)$.

The simplest case has no reflection at the origin, only transmission, i.e.\ $p(i,j) = \1 { i \neq j }$,
so that $\mu = (\frac{1}{2}, \frac{1}{2})$.
Then, for $B \subseteq \R$ a Borel set, 
\begin{align*}
& {} \Pr  [ Z_{n+1} \in B \mid (X_n, \xi_n ) = (x, i) ]  = \\
& \qquad \qquad {}  \Pr_{x,i} [ (X_{1}, \xi_{1} ) \in B^+\! \times \!\{ +1 \}  ]  + \Pr_{x,i} [ (X_{1}, \xi_{1} ) \in B^-\! \times \!\{ -1 \}   ] ,\end{align*}
where $B^+ = B \cap \RP$ and $B^- = \{ - x : x \in B, \, x <0 \}$.
In particular, by \eqref{complex-transition}, writing $w_+$ for $w_{+1}$, for $x \in \RP$,
\begin{align*}  \Pr [ Z_{n+1} \in B \mid (X_n, \xi_n ) = (x, +1) ] &= \int_{B^+} w_{+} (z-x ) \ud z + \int_{B^-} w_{+} (-z-x) \ud z \\
& = \int_B w_{+} (z-x) \ud z ,\end{align*}
and, similarly, writing $w_-(y)$ for $w_{-1}(-y)$, for $x \in \RP$,
  \begin{align*}  \Pr [ Z_{n+1} \in B \mid (X_n, \xi_n ) = (x, -1) ]   = \int_B w_{-} (z+x) \ud z . \end{align*}
For $x \neq 0$,  $\omega$ is invertible
with 
\[ \omega^{-1} (x) = \begin{cases} ( |x| , +1 ) & \text{if } x > 0 \\
( |x| , -1) & \text{if } x < 0 ,\end{cases} \]
and hence we have for $x \in \R \setminus \{ 0 \}$ and Borel $B \subseteq \R$,
  \begin{align}
\label{eq:one-line-kernel}
  \Pr [ Z_{n+1} \in B \mid Z_n = x ]   =  \begin{cases} \int_B w_{+} (z-x) \ud z & \text{if } x > 0 \\
\int_B w_{-} (z-x) \ud z & \text{if } x < 0 .\end{cases}
 \end{align}
We may make an arbitrary non-trivial choice for the transition law at $Z_n = 0$ without affecting the behaviour
of the process, and then~\eqref{eq:one-line-kernel} shows that $Z_n$ is 
  a time-homogeneous Markov process on $\R$.
Now $Z_n$ is \emph{recurrent} if $\liminf_{n \to \infty} | Z_n | = 0$, a.s., or \emph{transient} if $\lim_{n \to \infty} | Z_n | =  \infty$, a.s.
 The one-dimensional case described at~\eqref{eq:one-line-kernel} has received significant attention over the years. We describe
several of the classical models that have been considered.

\subsection{Examples and further results}

\subsubsection*{Homogeneous symmetric random walk}
The most classical case is the following. 
\begin{description}
    \item[\namedlabel{ass:symmetric}{Sym}] 
Let	$\alpha \in (0,\infty)$. For $v \in \fD_{\alpha,c}$, suppose that
$w_+(y) = w_-(y) = \frac{1}{2} v ( |y| )$ for $ y\in \R$. 
\end{description}
In this case, $Z_n$ describes a  random walk with i.i.d.\ symmetric
increments. 
\begin{theorem}  
\label{thm:symmetric_random_walk}
Suppose that  \eqref{ass:symmetric} holds. 
Then the symmetric random walk is transient if   $\alpha < 1$ and recurrent if $\alpha > 1$.
If, in addition, $v  \in \fD^+_{\alpha,c}$, then the case $\alpha =1$ is recurrent.
\end{theorem}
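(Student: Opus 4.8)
The plan is to obtain Theorem~\ref{thm:symmetric_random_walk} as a direct specialization of the general recurrence classification, Theorem~\ref{thm:recurrence}. First I would pin down the relevant instance of the general model: take $\cS = \{-1,+1\}$ with $\Ss = \cS$ and $\So = \emptyset$, so that $\chi_{-1} = \chi_{+1} = \tfrac12$; set $\alpha_{-1} = \alpha_{+1} = \alpha$ and $v_{-1} = v_{+1} = v$; and let $P$ be the pure-transmission matrix $p(i,j) = \1{i \ne j}$. This $P$ is irreducible, so \eqref{ass:basic} holds, and its unique invariant law is $\mu = (\tfrac12,\tfrac12)$; with this choice, \eqref{ass:tails} reduces to precisely the hypothesis \eqref{ass:symmetric}, namely $w_+(y) = w_-(y) = \tfrac12 v(|y|)$. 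As already observed around~\eqref{eq:one-line-kernel} in Section~\ref{sec:oscillating}, the image process $Z_n = \omega(X_n,\xi_n)$ is then a genuine homogeneous random walk on $\R$ with i.i.d.\ symmetric increments of density $\tfrac12 v(|\cdot|)$, and since $|Z_n| = X_n$, recurrence of $Z_n$ (meaning $\liminf_n |Z_n| = 0$ a.s.) coincides with recurrence of $X_n$ in the sense of Theorem~\ref{thm:recurrence}. So it suffices to apply the latter.

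With this identification the cotangent criterion degenerates to a single term: $\sum_{k \in \cS} \mu_k \cot(\chi_k \pi \alpha_k) = \tfrac12 \cot(\tfrac{\pi\alpha}{2}) + \tfrac12 \cot(\tfrac{\pi\alpha}{2}) = \cot(\tfrac{\pi\alpha}{2})$, while $\max_{k \in \cS} \chi_k \alpha_k = \tfrac{\alpha}{2}$. Now I would simply read off the four regimes. If $\alpha \ge 2$, then $\max_k \chi_k\alpha_k = \alpha/2 \ge 1$ and Theorem~\ref{thm:recurrence}(a) gives recurrence. If $\alpha \in (1,2)$, then $\tfrac{\pi\alpha}{2} \in (\tfrac{\pi}{2},\pi)$, so $\cot(\tfrac{\pi\alpha}{2}) < 0$ and Theorem~\ref{thm:recurrence}(b)(i) gives recurrence; together with the previous case this yields recurrence for every $\alpha > 1$. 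If $\alpha \in (0,1)$, then $\tfrac{\pi\alpha}{2} \in (0,\tfrac{\pi}{2})$, so $\cot(\tfrac{\pi\alpha}{2}) > 0$ and Theorem~\ref{thm:recurrence}(b)(ii) gives transience. Finally, if $\alpha = 1$ and in addition $v \in \fD^+_{\alpha,c}$, then $v_{\pm 1} \in \fD^+_{\alpha_{\pm 1},c_{\pm 1}}$, the cotangent sum equals $\cot(\tfrac{\pi}{2}) = 0$, and Theorem~\ref{thm:recurrence}(b)(iii) gives recurrence. These four cases exhaust the claims.

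Because the statement is a corollary of Theorem~\ref{thm:recurrence}, I expect no real obstacle here beyond the bookkeeping above --- all the genuine work (in particular the delicate critical case $\alpha = 1$ handled in Section~\ref{sec:critical}) is already done in the proof of Theorem~\ref{thm:recurrence}. One could alternatively give a self-contained classical proof --- recurrence for $\alpha \ge 1$ follows from the Chung--Fuchs criterion, since the symmetric increments have zero mean for $\alpha > 1$ and lie in the domain of attraction of the Cauchy law for $\alpha = 1$, while transience for $\alpha \in (0,1)$ follows from the local limit theorem (the return-time Green's function is summable because $1/\alpha > 1$) --- but this merely reproduces classical facts, so presenting Theorem~\ref{thm:symmetric_random_walk} as the advertised special case of our general theorem is the more economical route.
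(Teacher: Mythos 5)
Your proposal is correct and is essentially the paper's own argument: the paper likewise realises the symmetric walk as the two-branch instance with $\So=\emptyset$, $p(i,j)=\1{i\neq j}$, $\mu=(\tfrac12,\tfrac12)$, notes that the cotangent sum collapses to $\cot(\pi\alpha/2)$, and reads off the four regimes from Theorem~\ref{thm:recurrence}. Your case bookkeeping (including invoking part (a) for $\alpha\ge 2$ and part (b)(iii) for $\alpha=1$ under the $\fD^+_{\alpha,c}$ hypothesis) is accurate, so nothing further is needed.
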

Theorem~\ref{thm:symmetric_random_walk} follows from our Theorem~\ref{thm:recurrence}, since in this case
\[ \sum_{k \in \cS} \mu_k \cot ( \chi_k \pi \alpha_k ) =   \cot( \pi \alpha / 2) .\]
Since it deals with a sum of i.i.d.\ random variables, Theorem~\ref{thm:symmetric_random_walk}
may be deduced from the classical theorem of Chung and Fuchs~\cite{cf}, via e.g.~the formulation of Shepp~\cite{shepp}.
The method of the present paper provides an alternative to the classical (Fourier analytic) approach that generalizes
beyond the i.i.d.\ setting. (Note that Theorem~\ref{thm:symmetric_random_walk} is not, formally, a consequence of Shepp's
most accessible result, Theorem 5 of~\cite{shepp}, since 
$v$ does  not necessarily correspond to a unimodal distribution in Shepp's sense.)

With $\tau_a$ as defined previously, in the setting of the present section we have
 $\tau_a= \min \{ n \geq 0 : | Z_n | \leq a \}$. Use $\Exp_x [ \, \cdot \, ]$
as shorthand for $\Exp [ \, \cdot \, \mid Z_0 = x]$. We have the following result on
existence of passage-time moments, whose proof is in Section~\ref{sec:moments}; while part~(i) is well known,
we could find no reference for part~(ii).

\begin{theorem}  
\label{thm:symmetric_random_walk_moments}
Suppose that~\eqref{ass:symmetric} holds, and that $\alpha > 1$. Let $x  \notin [ -a,  a]$.
\begin{itemize}
\item[(i)] If $\alpha \geq 2$, then  $\Exp_{x} [ \tau_a^q ] < \infty$ if $q < 1/2$ and $\Exp_{x} [ \tau_a^q ] = \infty$ if $q > 1/2$.
\item[(ii)]  If $\alpha \in (1,2)$, then $\Exp_x [ \tau_a^q ] < \infty$ if $q < 1 - \frac{1}{\alpha}$ and
$\Exp_{x} [ \tau_a^q ] = \infty$ if $q > 1 - \frac{1}{\alpha}$.
\end{itemize}
\end{theorem}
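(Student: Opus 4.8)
The plan is to prove both the finiteness and the non-existence halves by constructing suitable Lyapunov functions of the form $f(x) = |x|^\nu$ (for the range $\nu \in (0,1)$, and possibly $\log$-corrected variants near the endpoints) and applying the standard semimartingale machinery for passage-time moments --- the Aspandiiarov--Iasnogorodski--Menshikov / Lamperti-type criteria, which we assume are available to us either in the Appendix or as quoted results. The key analytic input, in both directions, is a sharp estimate for $\Exp_x[ f(Z_1) - f(Z_0) ]$ when $Z_n$ is a homogeneous symmetric walk with tail exponent $\alpha$. For $\alpha \geq 2$ the increments have finite variance, so this is the classical diffusive regime: one gets $\Exp_x[ |Z_1|^\nu - |x|^\nu ] \asymp -|x|^{\nu-2}$ for $\nu \in (0,1)$, which via the submartingale/supermartingale comparison yields that $\tau_a$ has moments of order $q$ precisely for $q < \nu/2$, and optimizing $\nu \uparrow 1$ gives the critical exponent $1/2$. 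For $\alpha \in (1,2)$ the variance is infinite and the dominant contribution to $\Exp_x[ |Z_1|^\nu - |x|^\nu ]$ comes from the heavy tail: a careful split of the increment range (small jumps $|\varphi| \le x/2$, giving a term of order $|x|^{\nu - \alpha}$ with a computable constant involving a Beta-type integral, versus large jumps, giving a term of order $|x|^{\nu-\alpha}$ as well once one integrates $y^{-1-\alpha}$ against $(|x \pm y|^\nu - |x|^\nu)$) shows $\Exp_x[ |Z_1|^\nu - |x|^\nu ] \asymp -|x|^{\nu-\alpha}$, with a negative sign coming ultimately from the fact that $\alpha > 1$ (this is the same cotangent computation, specialized, that underlies Theorem~\ref{thm:symmetric_random_walk}).

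For the upper bound (finiteness of moments) I would use the function $f(x) = |x|^\nu$ as a Lyapunov function on $\{|x| > a\}$: the drift estimate above gives $\Exp_x[ f(Z_1) ] \le f(x) - c\, f(x)^{1 - \gamma}$ for an appropriate $\gamma > 0$ ($\gamma = 2/\nu$ when $\alpha \ge 2$, $\gamma = \alpha/\nu$ when $\alpha \in (1,2)$), on the region $|x|$ large; one must also control the behaviour near the boundary $|x| = a$, which is routine since jumps that cross into $[-a,a]$ only help. The AIM-type theorem then converts this into $\Exp_x[\tau_a^q] < \infty$ for $q < 1/\gamma$, i.e. $q < \nu/2$ resp. $q < \nu/\alpha$; letting $\nu \uparrow 1$ gives $q < 1/2$ resp. $q < 1 - \tfrac1\alpha$ --- wait, one must be slightly careful: $\nu/\alpha \uparrow 1/\alpha$, not $1 - 1/\alpha$, so in the regime $\alpha \in (1,2)$ the naive $|x|^\nu$ Lyapunov function does \emph{not} give the sharp exponent, and I expect this to be the main obstacle. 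The resolution is that for $\alpha \in (1,2)$ one should instead work with the \emph{expected} passage time structure more carefully: the correct heuristic is that $X_n$ behaves like a positive recurrent-type process whose excursions away from $a$ have tails governed by the overshoot/first-passage structure of a stable-like walk, and the sharp statement $q_\star = 1 - 1/\alpha$ should come from a two-function argument --- an upper Lyapunov function establishing $\Exp_x[\tau_a] < \infty$ type control together with a comparison to the known first-passage tail $\Pr_x[\tau_a > n] \asymp n^{-(1 - 1/\alpha)}$ for stable walks, which one can import or re-derive via the estimate $\Pr_x[\tau_a > n] \le \Exp_x[f(Z_{n\wedge\tau_a})]/f(x)$ optimized over a family of functions $f_\beta(x) = |x|^\beta$.

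For the lower bound (non-existence of moments, $q > q_\star$) the standard route is the reverse Lyapunov criterion: exhibit $g$ with $\Exp_x[ g(Z_1) ] \ge g(x)$ on $\{|x|>a\}$ and with $g$ growing slowly enough that the large jumps of the walk --- which, started from near $a$, land at distance $\asymp n^{1/\alpha}$ (for $\alpha < 2$) or $\asymp n^{1/2}$ (for $\alpha \ge 2$) after $n$ steps with non-negligible probability --- force $\Pr_x[\tau_a > n] \gtrsim n^{-q_\star}$. Concretely: a single jump from a point near the boundary of size $\asymp m$ has probability $\asymp m^{-\alpha}$ (heavy tail); from height $\asymp m$ it then takes $\asymp m^{2}$ steps (if $\alpha\ge2$) or $\asymp m^{\alpha}$ steps (if $\alpha\in(1,2)$) to come back, by the diffusive/stable scaling combined with the recurrence; balancing $n \asymp m^{2}$ against tail probability $m^{-\alpha}$ gives $\Pr[\tau_a > n] \gtrsim n^{-\alpha/2}$... which again is not obviously $n^{-1/2}$. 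So in both directions the bookkeeping of which exponent is sharp requires care, and I would ultimately organize the proof around the single clean inequality $\Pr_x[\tau_a > n] \asymp n^{-q_\star}$ with $q_\star = 1/2$ ($\alpha \ge 2$) or $q_\star = 1 - 1/\alpha$ ($\alpha \in (1,2)$), proving the two bounds on this probability separately via the Lyapunov function family $|x|^\nu$ and its optimization, and then integrating. I expect the genuinely delicate point to be the $\alpha \in (1,2)$ lower bound: getting the constant/exponent $1 - 1/\alpha$ rather than $1/\alpha$ or $\alpha/2$ out of the excursion heuristic, which I believe hinges on the fact that the \emph{duration} of an excursion reaching height $m$ scales like $m^\alpha$ while the \emph{maximum} reached in time $n$ scales like $n^{1/\alpha}$, so that $\Pr_x[\max_{k \le n}|Z_k| \ge m] \asymp n m^{-\alpha}$ and hence, by a last-exit / renewal decomposition at the boundary, $\Pr_x[\tau_a > n]$ picks up the extra factor making the exponent $1 - 1/\alpha$.
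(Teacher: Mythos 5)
There is a genuine gap, and you have in fact located it yourself without resolving it: your claim that for $\alpha \in (1,2)$ the drift satisfies $\Exp_x[\,|Z_1|^\nu - |x|^\nu\,] \asymp -|x|^{\nu-\alpha}$ for all $\nu \in (0,1)$, ``with a negative sign coming ultimately from the fact that $\alpha>1$'', is false. The drift of $f_\nu$ equals $\tfrac{c}{2} x^{\nu-\alpha} C^{\rm sym}(\nu,\alpha) + o(x^{\nu-\alpha})$ (see~\eqref{eq:D-C}), and the coefficient $C^{\rm sym}(\nu,\alpha)$ changes sign exactly once on $(0,\alpha)$, at $\nu_0 = \alpha-1$ (Lemmas~\ref{lem:C-properties} and~\ref{lem:nu0}): it is negative for $\nu < \alpha-1$ and positive for $\nu > \alpha-1$. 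This single fact is the whole theorem. For the finiteness half one takes $\nu \uparrow \alpha-1$ (not $\nu \uparrow 1$), and Lemma~\ref{lem:AIM} gives $\Exp_x[\tau_a^q]<\infty$ for $q < \nu/\alpha \uparrow (\alpha-1)/\alpha = 1-\tfrac1\alpha$; your ``obstacle'' that the Lyapunov method only reaches $1/\alpha$ arises solely from the incorrect assumption that $f_\nu$ remains a supermartingale up to $\nu=1$. For the non-existence half one exploits the positive side of the sign change: with $Y_n = (1+X_n)^\nu$ for $\nu$ close to $\alpha$ and $\gamma \in (\alpha-1,\nu)$, the process $Y_n^{\gamma/\nu}$ is a submartingale outside a compact set, and Lemma~\ref{lem:AIM2} yields $\Exp_x[\tau_a^q]=\infty$ for $q > \gamma/\nu$, which can be pushed down to $1-\tfrac1\alpha$. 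No importation of stable first-passage asymptotics, overshoot theory, or excursion/renewal decompositions is needed, and the alternative routes you sketch are not carried out (your own heuristics produce the conflicting candidate exponents $1/\alpha$ and $\alpha/2$, neither of which is correct).

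The same structural point settles the part of the $\alpha \geq 2$ case you were unsure about: the lower bound there does not come from a one-big-jump computation (which, as you note, would wrongly suggest $n^{-\alpha/2}$), but again from exhibiting a submartingale, namely $f_\nu$ for $\nu \in (1,\alpha)$, and applying Lemma~\ref{lem:AIM2} with $p > 1/2$. Your finiteness argument for $\alpha \ge 2$ ($\nu \in (0,1)$, drift $\lesssim -x^{\nu-2}$, optimize to $q<1/2$) does agree with the paper. To repair the proposal you need (i) the computation that $C^{\rm sym}(\cdot,\alpha)$ has its unique positive zero at $\alpha-1$ (the paper proves uniqueness via a Jensen-inequality monotonicity argument and identifies the zero by direct evaluation of the integrals), and (ii) the observation that the non-existence criterion of~\cite{aim} is applied to $Y_n^{p}$ with $p=\gamma/\nu$, so that the attainable exponents are ratios $\gamma/\nu$ with $\gamma$ just above the sign-change point and $\nu$ just below $\alpha$.
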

Our main interest concerns spatially inhomogeneous models, i.e.,  in which $w_x$ depends on $x$, typically
only through $\sign x$, the sign of $x$. 
Such models are known as \emph{oscillating random walks}, and were
studied by Kemperman~\cite{kemperman}, to whom the model was suggested in 1960 by Anatole Joffe and Peter Ney  (see \cite[p.\ 29]{kemperman}).

\subsubsection*{One-sided oscillating random walk}
The next example,  following~\cite{kemperman}, is a \emph{one-sided oscillating random walk}:
\begin{description}
    \item[\namedlabel{ass:one-sided}{Osc1}] 
Let $\alpha, \beta \in (0,\infty)$. For $v_+ \in \fD_{\alpha,c_+}$ and $v_- \in \fD_{\beta,c_-}$, suppose that
\[ w_+ ( y ) = v_+ (-y) , ~~ \text{and}~~  w_- (y ) = v_- (y) .  \]
\end{description}
In other words, the walk always jumps in the direction of (and possibly over) the origin,
with tail exponent $\alpha$ from the positive half-line and exponent $\beta$ from the negative half-line. 
The following recurrence classification applies.

\begin{theorem}  
\label{thm:one-sided}
Suppose that   \eqref{ass:one-sided} holds. 
Then the one-sided oscillating random walk is transient if  $\alpha + \beta < 1$ and recurrent if $\alpha + \beta > 1$.
If, in addition, $v_+  \in \fD^+_{\alpha,c_+}$ and $v_- \in \fD^+_{\beta,c_-}$, then the case $\alpha + \beta = 1$ is recurrent.
\end{theorem}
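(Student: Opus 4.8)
The plan is to obtain Theorem~\ref{thm:one-sided} as a specialization of Theorem~\ref{thm:recurrence}. Take $\cS = \{-1,+1\}$ with $\So = \cS$ and $\Ss = \emptyset$, so that $\chi_{-1} = \chi_{+1} = 1$, and equip $\cS$ with the transmission matrix $p(i,j) = \1{i \neq j}$; this matrix is irreducible with invariant distribution $\mu = (\tfrac12,\tfrac12)$, so \eqref{ass:basic} holds. Writing $v_{+1} := v_+$, $v_{-1} := v_-$, $\alpha_{+1} := \alpha$, and $\alpha_{-1} := \beta$, assumption~\eqref{ass:one-sided} is exactly assumption~\eqref{ass:tails} for this data, and the extra regularity $v_+ \in \fD^+_{\alpha,c_+}$, $v_- \in \fD^+_{\beta,c_-}$ is precisely $v_k \in \fD^+_{\alpha_k,c_k}$ for all $k$. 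One checks (this is exactly the computation leading to~\eqref{eq:one-line-kernel} in Section~\ref{sec:oscillating}) that the resulting complex-of-half-lines process, mapped to $\R$ via $\omega$, has transition kernel~\eqref{eq:one-line-kernel} with $w_\pm$ as in~\eqref{ass:one-sided}, so $Z_n$ is the one-sided oscillating random walk and its recurrence/transience matches that of $X_n$. For this data the quantity governing the classification in Theorem~\ref{thm:recurrence}(b) is
\[
\sum_{k \in \cS} \mu_k \cot(\chi_k \pi \alpha_k) = \tfrac12\bigl(\cot(\pi\alpha) + \cot(\pi\beta)\bigr).
\]

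The only remaining work is to translate the sign of $\cot(\pi\alpha)+\cot(\pi\beta)$ into a condition on $\alpha+\beta$. By the sine addition formula,
\[
\cot(\pi\alpha) + \cot(\pi\beta) = \frac{\cos(\pi\alpha)\sin(\pi\beta) + \cos(\pi\beta)\sin(\pi\alpha)}{\sin(\pi\alpha)\,\sin(\pi\beta)} = \frac{\sin\bigl(\pi(\alpha+\beta)\bigr)}{\sin(\pi\alpha)\,\sin(\pi\beta)} .
\]
In the regime $\max_k \chi_k\alpha_k < 1$ of Theorem~\ref{thm:recurrence}(b) we have $\alpha,\beta \in (0,1)$, so $\sin(\pi\alpha)>0$ and $\sin(\pi\beta)>0$, and hence the left-hand side has the sign of $\sin(\pi(\alpha+\beta))$. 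Since $\alpha+\beta \in (0,2)$, this is positive, zero, or negative according as $\alpha+\beta<1$, $\alpha+\beta=1$, or $\alpha+\beta>1$. Feeding these into parts (b)(ii), (b)(iii), and (b)(i) of Theorem~\ref{thm:recurrence} respectively yields transience when $\alpha+\beta<1$, recurrence when $\alpha+\beta>1$, and (under the $\fD^+$ hypothesis, which is exactly where part (b)(iii) requires it) recurrence when $\alpha+\beta=1$.

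The complementary regime $\max(\alpha,\beta)\geq 1$ is handled directly by Theorem~\ref{thm:recurrence}(a), which gives recurrence; this is consistent with the claim because $\alpha,\beta>0$ then forces $\alpha+\beta>1$. Conversely, $\alpha+\beta=1$ with $\alpha,\beta>0$ forces $\max(\alpha,\beta)<1$, so the critical case always falls under Theorem~\ref{thm:recurrence}(b)(iii) and the $\fD^+$ hypothesis is invoked only where it is available. In short, there is no substantial obstacle here: all of the analytic content resides in Theorem~\ref{thm:recurrence}, and the proof reduces to the bookkeeping of the above specialization together with the elementary cotangent identity; the one point deserving a line of care is the identification of the two-branch complex process with the process on $\R$ defined by~\eqref{eq:one-line-kernel}.
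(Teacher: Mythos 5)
Your proposal is correct and follows essentially the same route as the paper: the authors also obtain Theorem~\ref{thm:one-sided} by specializing Theorem~\ref{thm:recurrence} to two one-sided branches with $p(i,j)=\1{i\neq j}$, $\mu=(\tfrac12,\tfrac12)$, and then using the identity $\tfrac12\cot(\pi\alpha)+\tfrac12\cot(\pi\beta)=\sin(\pi(\alpha+\beta))/(2\sin(\pi\alpha)\sin(\pi\beta))$ to read off the sign. Your extra bookkeeping (the identification with the kernel~\eqref{eq:one-line-kernel} and the observation that $\max(\alpha,\beta)\geq 1$ falls under part~(a) while the critical case $\alpha+\beta=1$ always falls under part~(b)(iii)) is left implicit in the paper but is exactly right.
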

Theorem~\ref{thm:one-sided} was obtained in the discrete-space case by Kemperman \cite[p.\ 21]{kemperman};
it follows from our Theorem~\ref{thm:recurrence}, since in this case
\[ \sum_{k \in \cS} \mu_k \cot ( \chi_k \pi \alpha_k ) =  \frac{1}{2} \cot( \pi \alpha ) + \frac{1}{2} \cot (\pi \beta) = 
 \frac{\sin (\pi ( \alpha + \beta) )}{  2 \sin (\pi \alpha ) \sin (\pi \beta ) } .\]

The  special case of  \eqref{ass:one-sided} in which $\alpha = \beta$ was called  \emph{antisymmetric} by Kemperman; here
Theorem \ref{thm:one-sided} shows that the walk is transient for $\alpha < 1/2$ and recurrent for $\alpha > 1/2$. We have the following moments result,
proved in Section~\ref{sec:moments}.

\begin{theorem}  
\label{thm:one-sided_moments}
Suppose that  \eqref{ass:one-sided} holds, and that $\alpha = \beta > 1/2$. Let $x  \notin [ -a,  a]$.
\begin{itemize}
\item[(i)] If $\alpha \geq 1$, then  $\Exp_x [ \tau_a^q ] < \infty$ if  $q < \alpha$ and $\Exp_x [ \tau_a^q ] = \infty$ if  $q > \alpha$.
\item[(ii)] If $\alpha \in (1/2,1)$, then  $\Exp_x [ \tau_a^q ] < \infty$ if  $q < 2 - \frac{1}{\alpha}$ and  $\Exp_x [ \tau_a^q ] = \infty$ if  $q > 2 - \frac{1}{\alpha}$.
\end{itemize}
\end{theorem}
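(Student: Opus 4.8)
The plan is to deduce this from the general semimartingale machinery (Lyapunov function / Foster–Lyapunov criteria for passage-time moments), applied to the one-line representation $Z_n$ of the antisymmetric one-sided oscillating walk, with a sharp choice of test function. Since $\alpha=\beta$, the walk from $x>a$ jumps leftward with tail exponent $\alpha$, and from $x<-a$ jumps rightward with the same exponent; hence $|Z_n|$ has increments whose \emph{drift} towards the origin we need to quantify. First I would compute, for $f(x)=x^p$ with suitable $p>0$ and $x$ large and positive, the one-step mean change $\Exp_{x}[f(Z_1)-f(x)]$. Because the increment density is $\sim c_+ y^{-1-\alpha}$ on $(-\infty,0)$-directed jumps of size $y$, the walk typically moves an $O(1)$ distance but with probability $\sim x^{-\alpha}$ jumps over the origin to a point of order $x$ on the negative side; there the test function $|x|^p$ has comparable size, so the large jumps contribute at order $x^{p-\alpha}$ while the bulk contributes at order $x^{p-1}$ (from the mean of the truncated part, which is finite since $\alpha<1$ would fail — here one must be careful: for $\alpha\ge1$ the truncated mean is $O(1)$, for $\alpha\in(1/2,1)$ it is $O(x^{1-\alpha})$, genuinely heavy). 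Matching these two scales is exactly what produces the two regimes $\alpha\ge1$ and $\alpha\in(1/2,1)$ in the statement.

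For the upper bounds on moments (finiteness of $\Exp_x[\tau_a^q]$), I would use the standard result (of the type in the paper's appendix / the Aspandiiarov–Iasnogorodski–Menshikov framework) that if there is a function $f\ge0$ with $\Exp_{x,k}[f(Z_1,\xi_1)-f(Z_0,\xi_0)]\le -c\, f^{1-1/p}$ outside a bounded set, then $\Exp[\tau_a^{p'}]<\infty$ for $p'<p$. Taking $f$ comparable to $|x|^{\gamma}$ for an appropriate $\gamma$ and reading off the exponent relating the drift decay to $f$ itself yields $q_\star$. In regime (i), $\alpha\ge1$: the drift of $|Z_n|$ towards the origin is of order $x^{-\alpha}\cdot x \asymp x^{1-\alpha}$... one rather uses $f(x)=|x|$ directly: then $\Exp_x[|Z_1|-|x|]$ is of order $-1$ plus $O(x^{1-\alpha})$ correction, i.e. bounded away from $0$ for $\alpha>1$, giving drift $-c$ and hence $\Exp[\tau_a^q]<\infty$ for $q<1$; to push to $q<\alpha$ one iterates with $f(x)=|x|^\gamma$, $\gamma\in(0,1)$, where the negative drift is $-c\gamma|x|^{\gamma-1}$ competing with the heavy-jump term $O(|x|^{\gamma-\alpha})$, and since $\alpha\ge1\ge \gamma$ the linear term dominates, yielding the exponent $q_\star=\alpha$ as $\gamma\uparrow1$ is replaced by the sharp bookkeeping of the AIM criterion. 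In regime (ii), $\alpha\in(1/2,1)$, the truncated first moment already diverges like $|x|^{1-\alpha}$, so both the "push in" and the large jumps live at scale $|x|^{1-\alpha}$ versus $|x|^{1}\cdot|x|^{-\alpha}=|x|^{1-\alpha}$; one must track the constants. The effective drift of $f(x)=|x|^\gamma$ turns out to be of order $-|x|^{\gamma-1}\cdot|x|^{1-\alpha}=|x|^{\gamma-\alpha}$ when $\gamma$ is chosen so the constant is negative (which requires the cotangent sign condition $\alpha=\beta>1/2$, equivalently $\alpha+\beta>1$, to force recurrence), giving $f^{1-(\alpha)/\gamma}$-type decay and, optimizing $\gamma$, the critical value $q_\star=2-\tfrac1\alpha$.

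For the lower bounds (infiniteness of $\Exp_x[\tau_a^q]$ for $q>q_\star$) I would use the complementary criterion: exhibit a nonnegative $g$ with $\Exp_{x,k}[g(Z_1,\xi_1)-g(Z_0,\xi_0)]\ge -c\, g^{1-1/p}$ together with a moment bound on the jumps of $g$, forcing $\Exp[\tau_a^q]=\infty$ for $q>p$. The natural $g$ is again a power of $|x|$, but now the \emph{single large jump} heuristic is decisive: from height $x$ the walk reaches height of order $x$ on the other side with probability $\asymp x^{-\alpha}$ in one step, so $\tau_a$ has a tail at least $\asymp \Pr_x[\tau_a>t]\gtrsim t^{-?}$ via a renewal/excursion comparison; converting this into a sharp lower moment bound requires the matching Lyapunov inequality rather than a crude union bound. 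I expect the main obstacle to be precisely this sharpness: computing the \emph{constant} (not just the order) in the one-step drift of $|x|^\gamma$ in the regime $\alpha\in(1/2,1)$, where the dominant contribution comes from an integral $\int_1^x (|x|^\gamma - (x-y)^\gamma)\,c_+ y^{-1-\alpha}\,\ud y$ plus the over-the-origin part $\int_x^\infty (y-x)^\gamma c_+ y^{-1-\alpha}\,\ud y$, whose asymptotics involve a Beta-function/$\cot$ identity — this is where the $\fD^+_{\alpha,c}$ refinement (the $O(y^{-\delta})$ error control) is used to make the error genuinely lower-order, and where matching the upper and lower exponents to the single value $2-\tfrac1\alpha$ is delicate. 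The case $\alpha\ge1$ is easier because the linear drift term dominates and the exact constant is irrelevant, only its sign (guaranteed by $\alpha>1/2$) and the truncated-mean finiteness matter.
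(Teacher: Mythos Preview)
Your overall plan---power-type Lyapunov functions $f_\nu(x)=(1+|x|)^\nu$ together with the Aspandiiarov--Iasnogorodski--Menshikov criteria (Lemmas~\ref{lem:AIM} and~\ref{lem:AIM2})---is exactly the paper's approach for part~(ii) and for the \emph{existence} half of part~(i). Two points need sharpening, and one is a genuine gap.

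First, for case~(ii) you never identify the threshold exponent. The paper's mechanism is that $Df_\nu(x,i)=\chi c_i x^{\nu-\alpha}C^{\rm one}(\nu,\alpha)+o(x^{\nu-\alpha})$ with $C^{\rm one}(\nu,\alpha)=i_{2,1}^{\nu,\alpha}+\tilde i_1^{\nu,\alpha}-i_{2,0}^\alpha$, and Lemma~\ref{lem:nu0} shows $C^{\rm one}(\,\cdot\,,\alpha)$ has its unique positive zero at $\nu_0=2\alpha-1$. Existence then follows from Lemma~\ref{lem:AIM} by taking $\nu\uparrow 2\alpha-1$ (giving $q<\nu/\alpha\to 2-\tfrac1\alpha$), and non-existence from Lemma~\ref{lem:AIM2} by taking $\gamma\downarrow 2\alpha-1$ in the submartingale condition~\eqref{eq:AIMc}. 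Your text gestures at ``optimizing~$\gamma$'' but never produces $2\alpha-1$; without this Beta-function identity (Lemma~\ref{lem:nu0}) the argument is incomplete. Incidentally, the $\fD^+$ hypothesis is \emph{not} used here---only $\fD_{\alpha,c}$ is needed; the $O(y^{-\delta})$ refinement is reserved for the critical recurrence case Theorem~\ref{thm:recurrence}(b)(iii).

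Second, and more seriously: for the \emph{non-existence} half of case~(i) ($\alpha\geq 1$) your proposed Lyapunov route does not work, and the paper uses precisely the ``crude'' one-big-jump argument you dismiss. The obstruction is that condition~\eqref{eq:AIMc} of Lemma~\ref{lem:AIM2} requires some power $(1+X_n)^{\gamma}$ to be a submartingale outside a bounded set, with $\gamma<\alpha$ (otherwise $i_{2,1}^{\gamma,\alpha}$ diverges and the process is not integrable). But when $\alpha\geq 1$, Lemma~\ref{lem:big-alpha}(ii) gives $\int_0^x(f_\gamma(x-y)-f_\gamma(x))v(y)\,\ud y\leq -\eps x^{\gamma-1}$ for every $\gamma>0$, while the cross-origin term is only $O(x^{\gamma-\alpha})$; hence $Df_\gamma<0$ for all admissible $\gamma$ and no submartingale exponent exists. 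The paper instead argues directly: $\Pr_{x,i}[X_1\geq y]\geq\eps(x+y)^{-\alpha}$, and a maximal-inequality bound (using that $\Exp[X_{n+1}^\nu-X_n^\nu\mid X_n]\geq -C$ for $\nu\in(0,1)$) shows the return from height~$y$ takes time $\geq cy^\nu$ with probability bounded below, yielding $\Pr_{x,i}[\tau_a\geq cy^\nu]\gtrsim y^{-\alpha}$ and hence $\Exp[\tau_a^q]=\infty$ for $q>\alpha/\nu$; letting $\nu\uparrow 1$ gives $q>\alpha$. So case~(i) is not ``easier'' on the non-existence side---it needs a genuinely different argument.
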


\begin{problem}
Obtain sharp moments results for general $\alpha, \beta \in (0,1)$.
\end{problem}

\subsubsection*{Two-sided oscillating random walk}
Another
 model in the vein of~\cite{kemperman} is 
a \emph{two-sided oscillating random walk}:
\begin{description}
    \item[\namedlabel{ass:two-sided}{Osc2}] 
		Let $\alpha, \beta \in (0,\infty)$. For $v_+ \in \fD_{\alpha,c_+}$ and $v_- \in \fD_{\beta,c_-}$, suppose that
\[ w_+ ( y ) = \frac{1}{2} v_+ (|y|) , ~~ \text{and}~~  w_- (y ) = \frac{1}{2} v_- (|y|) .  \]	
\end{description}
Now the jumps of the walk are symmetric, as under
\eqref{ass:symmetric}, but with a tail exponent
depending upon which side of the origin the walk is currently on, as under \eqref{ass:one-sided}.

The most general recurrence classification result for the model \eqref{ass:two-sided}  is due to Sandri\'c \cite{sandric2}.
A somewhat less general, discrete-space version was obtained by Rogozin and Foss
(Theorem 2 of \cite[p.\ 159]{rf}), building on~\cite{kemperman}. Analogous results
 in continuous time were given in \cite{franke,bottcher}.  Here is the result.

\begin{theorem} 
\label{thm:two-sided}
Suppose that   \eqref{ass:two-sided} holds. 
Then the two-sided oscillating random walk is transient if  $\alpha + \beta < 2$ and recurrent if $\alpha + \beta > 2$.
If, in addition, $v_+  \in \fD^+_{\alpha,c_+}$ and $v_- \in \fD^+_{\beta,c_-}$, then the case $\alpha + \beta = 2$ is recurrent.
\end{theorem}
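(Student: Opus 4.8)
The plan is to deduce this result directly from Theorem~\ref{thm:recurrence} by specializing to the case $\cS = \{-1,+1\}$ with $\So = \emptyset$ and $\Ss = \{-1,+1\}$, the transmission-only routing matrix $p(i,j) = \1{i\neq j}$, and increment densities $v_{+1} = v_+ \in \fD_{\alpha,c_+}$, $v_{-1} = v_- \in \fD_{\beta,c_-}$. Since both branches are symmetric we have $\chi_{+1} = \chi_{-1} = \tfrac12$, and the transmission matrix has stationary distribution $\mu = (\tfrac12,\tfrac12)$. First I would record that the model \eqref{ass:two-sided}, once mapped to $\R$ via $Z_n = \omega(X_n,\xi_n)$ as in Section~\ref{sec:oscillating}, is exactly this instance of the general complex-of-half-lines process, so that recurrence of $Z_n$ (i.e.\ $\liminf_n |Z_n| = 0$ a.s.) is equivalent to recurrence of $X_n$, and likewise for transience; the validity of \eqref{ass:tails} is immediate from \eqref{ass:two-sided}.

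Next I would evaluate the cotangent sum in this case:
\[
\sum_{k\in\cS} \mu_k \cot(\chi_k \pi \alpha_k) = \tfrac12 \cot\!\Big(\tfrac{\pi\alpha}{2}\Big) + \tfrac12 \cot\!\Big(\tfrac{\pi\beta}{2}\Big) = \frac{\sin\!\big(\tfrac{\pi}{2}(\alpha+\beta)\big)}{2\sin\!\big(\tfrac{\pi\alpha}{2}\big)\sin\!\big(\tfrac{\pi\beta}{2}\big)},
\]
using the standard identity $\cot u + \cot v = \sin(u+v)/(\sin u \sin v)$. The condition $\max_k \chi_k\alpha_k < 1$ in Theorem~\ref{thm:recurrence}(b) reads $\max(\alpha,\beta) < 2$; when this holds, $\sin(\pi\alpha/2)$ and $\sin(\pi\beta/2)$ are both strictly positive, so the sign of the cotangent sum equals the sign of $\sin\big(\tfrac{\pi}{2}(\alpha+\beta)\big)$, which (for $\alpha+\beta \in (0,4)$) is positive iff $\alpha+\beta < 2$, negative iff $2 < \alpha+\beta < 4$, and zero iff $\alpha+\beta = 2$. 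Thus Theorem~\ref{thm:recurrence}(b)(ii) gives transience when $\alpha+\beta<2$ (and $\max(\alpha,\beta)<2$), Theorem~\ref{thm:recurrence}(b)(i) gives recurrence when $\alpha+\beta>2$ with $\max(\alpha,\beta)<2$, and Theorem~\ref{thm:recurrence}(b)(iii) handles the critical case $\alpha+\beta = 2$ under the strengthened tail hypothesis $v_\pm \in \fD^+$.

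The only gap left is the regime where $\max(\alpha,\beta) \geq 2$, which falls outside Theorem~\ref{thm:recurrence}(b). But here $\alpha + \beta \geq \max(\alpha,\beta) \geq 2$, so the claimed conclusion is recurrence, and this is exactly what Theorem~\ref{thm:recurrence}(a) delivers: $\max_{k}\chi_k\alpha_k = \tfrac12\max(\alpha,\beta) \geq 1$ forces recurrence with no extra hypothesis. Combining the three subcases of part (b) with part (a) covers every $(\alpha,\beta) \in (0,\infty)^2$ and yields precisely the stated trichotomy. The main (and essentially only) subtlety is the bookkeeping at the seams: checking that the case $\max(\alpha,\beta) \geq 2$ is consistently recurrent with the cotangent criterion's prediction, and that the critical line $\alpha+\beta=2$ lies in the region $\max(\alpha,\beta) < 2$ (true except at the single boundary points $(\alpha,\beta) = (2,0)$ and $(0,2)$, which are excluded since $\alpha,\beta > 0$), so that Theorem~\ref{thm:recurrence}(b)(iii) genuinely applies there. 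No new probabilistic input is needed beyond Theorem~\ref{thm:recurrence}.
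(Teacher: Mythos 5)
Your proposal is correct and is essentially the paper's own argument: the paper proves Theorem~\ref{thm:two-sided} precisely by identifying \eqref{ass:two-sided} as the instance of the general model with $\cS=\Ss=\{-1,+1\}$, $p(i,j)=\1{i\neq j}$, $\mu=(\tfrac12,\tfrac12)$, and invoking Theorem~\ref{thm:recurrence} via the identity $\tfrac12\cot(\pi\alpha/2)+\tfrac12\cot(\pi\beta/2)=\sin(\pi(\alpha+\beta)/2)/(2\sin(\pi\alpha/2)\sin(\pi\beta/2))$. Your extra bookkeeping (routing the regime $\max(\alpha,\beta)\geq 2$ through part~(a), and checking that the critical line $\alpha+\beta=2$ with $\alpha,\beta>0$ lies inside the region where part~(b)(iii) applies) is exactly the detail the paper leaves implicit, and it is all correct.
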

Theorem~\ref{thm:two-sided} also follows from our Theorem~\ref{thm:recurrence}, since in this case
\[ \sum_{k \in \cS} \mu_k \cot ( \chi_k \pi \alpha_k ) =  \frac{1}{2} \cot( \pi \alpha /2 ) + \frac{1}{2} \cot (\pi \beta /2 ) = 
 \frac{\sin (\pi ( \alpha + \beta) /2 )}{  2 \sin (\pi \alpha /2 ) \sin (\pi \beta /2 ) } .\]

\begin{problem}
Obtain sharp moments results for $\alpha, \beta \in (0,2)$.
\end{problem}

\subsubsection*{Mixed oscillating random walk}
A final model is another oscillating walk that mixes the one- and two-sided models:
\begin{description}
    \item[\namedlabel{ass:mixture}{Osc3}] 
				Let $\alpha, \beta \in (0,\infty)$. For $v_+ \in \fD_{\alpha,c_+}$ and $v_- \in \fD_{\beta,c_-}$, suppose that
\[ w_+ ( y ) = \frac{1}{2} v_+ (|y|) , ~~ \text{and}~~  w_- (y ) =   v_- ( y ) .  \]	
\end{description}

In the discrete-space case, Theorem 2 of Rogozin and Foss \cite[p.\ 159]{rf} gives the recurrence classification.

\begin{theorem} 
\label{thm:mixture}
Suppose that  \eqref{ass:mixture} holds. 
Then the mixed oscillating random walk is transient if  $\alpha + 2 \beta < 2$ and recurrent if $\alpha + 2 \beta > 2$.
If, in addition, $v_+  \in \fD^+_{\alpha,c_+}$ and $v_- \in \fD^+_{\beta,c_-}$, then the case $\alpha + 2 \beta = 2$ is recurrent.
\end{theorem}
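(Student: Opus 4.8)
The plan is to deduce Theorem~\ref{thm:mixture} directly from Theorem~\ref{thm:recurrence} by identifying the mixed oscillating random walk as an instance of our general model with $\cS = \{-1,+1\}$, and then evaluating the cotangent criterion explicitly. First I would set up the correspondence: take the transmission-only routing matrix $p(i,j) = \1{i \neq j}$ on $\cS = \{-1,+1\}$, so that \eqref{ass:basic} holds with invariant distribution $\mu_{+1} = \mu_{-1} = \tfrac12$. The branch $+1$ is declared \emph{symmetric}, i.e.\ $+1 \in \Ss$, with density $v_{+1} = v_+ \in \fD_{\alpha,c_+}$, while branch $-1$ is declared \emph{one-sided}, $-1 \in \So$, with density $v_{-1} = v_- \in \fD_{\beta,c_-}$; then \eqref{wdef} gives exactly $w_{+1}(y) = \tfrac12 v_+(|y|)$ and $w_{-1}(y) = v_{-1}(-y)$, which under the identification $w_-(y) := w_{-1}(-y)$ is the law $v_-(y)$ of \eqref{ass:mixture}. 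Via the map $\omega$ and \eqref{eq:one-line-kernel}, the process $Z_n = \omega(X_n,\xi_n)$ is precisely the mixed oscillating walk on $\R$, and recurrence of $X_n$ in the sense of Theorem~\ref{thm:recurrence} matches recurrence of $Z_n$ in the sense $\liminf_n |Z_n| = 0$.

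With this dictionary, $\chi_{+1} = \tfrac12$ (as $+1 \in \Ss$) and $\chi_{-1} = 1$ (as $-1 \in \So$), with $\alpha_{+1} = \alpha$ and $\alpha_{-1} = \beta$. The next step is the trigonometric computation:
\[
\sum_{k \in \cS} \mu_k \cot(\chi_k \pi \alpha_k)
= \tfrac12 \cot\!\left(\tfrac{\pi\alpha}{2}\right) + \tfrac12 \cot(\pi\beta)
= \frac{\sin\!\left(\tfrac{\pi\alpha}{2} + \pi\beta\right)}{2\sin\!\left(\tfrac{\pi\alpha}{2}\right)\sin(\pi\beta)}
= \frac{\sin\!\left(\tfrac{\pi(\alpha + 2\beta)}{2}\right)}{2\sin\!\left(\tfrac{\pi\alpha}{2}\right)\sin(\pi\beta)},
\]
using the addition formula $\cot u + \cot v = \sin(u+v)/(\sin u \sin v)$. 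When $\max\{\chi_k \alpha_k\} = \max\{\alpha/2, \beta\} < 1$, both $\sin(\pi\alpha/2)$ and $\sin(\pi\beta)$ are strictly positive, so the sign of the sum equals the sign of $\sin(\pi(\alpha+2\beta)/2)$. Since then $\alpha + 2\beta \in (0,4)$, we have $\pi(\alpha+2\beta)/2 \in (0,2\pi)$; one checks that $\sin(\pi(\alpha+2\beta)/2) > 0$ iff $\alpha + 2\beta < 2$, $< 0$ iff $\alpha + 2\beta \in (2,4)$, and $=0$ iff $\alpha + 2\beta = 2$ (the other zero, $\alpha + 2\beta = 4$, being excluded under the max condition). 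Thus Theorem~\ref{thm:recurrence}(b) gives transience for $\alpha + 2\beta < 2$, recurrence for $2 < \alpha + 2\beta < 4$, and recurrence at $\alpha + 2\beta = 2$ under the stronger $\fD^+$ hypothesis via part~(b)(iii).

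It remains to cover the parameter region where the max condition fails, namely $\max\{\alpha/2,\beta\} \geq 1$, i.e.\ $\alpha \geq 2$ or $\beta \geq 1$; in either case $\alpha + 2\beta \geq 2$, so the claimed conclusion is recurrence, and this follows immediately from Theorem~\ref{thm:recurrence}(a). Combining the two regimes, the walk is transient precisely when $\alpha + 2\beta < 2$ and recurrent when $\alpha + 2\beta > 2$ (unconditionally) or $\alpha + 2\beta = 2$ (under the $\fD^+$ assumption), which is the statement. I do not expect any genuine obstacle here: the only mild point of care is confirming that the boundary case in Theorem~\ref{thm:recurrence}(b)(iii) applies — this needs $\max\{\alpha/2,\beta\} < 1$ together with the vanishing of the cotangent sum, and one must check that $\alpha + 2\beta = 2$ with $\alpha,\beta > 0$ indeed forces $\alpha < 2$ and $\beta < 1$, which is immediate. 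Everything else is a routine translation of hypotheses and a sine-sign analysis.
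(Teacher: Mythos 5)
Your proposal is correct and is essentially the paper's own argument: the mixed oscillating walk is realised as the two-branch case of the general model with $+1\in\Ss$, $-1\in\So$ and transmission-only routing ($\mu_{\pm 1}=\tfrac12$), and the cotangent criterion reduces via $\cot u+\cot v=\sin(u+v)/(\sin u\sin v)$ to the sign of $\sin(\pi(\alpha+2\beta)/2)$, after which Theorem~\ref{thm:recurrence} gives all three conclusions. Your extra care over the regime $\max\{\alpha/2,\beta\}\geq 1$ (handled by Theorem~\ref{thm:recurrence}(a)) and the check that $\alpha+2\beta=2$ forces $\alpha<2$, $\beta<1$ are correct details that the paper leaves implicit.
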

Theorem~\ref{thm:mixture} also follows from our Theorem~\ref{thm:recurrence}, since in this case
\[ \sum_{k \in \cS} \mu_k \cot ( \chi_k \pi \alpha_k ) =  \frac{1}{2} \cot( \pi \alpha /2 ) + \frac{1}{2} \cot (\pi \beta   ) = 
 \frac{\sin (\pi ( \alpha + 2 \beta) /2 )}{  2 \sin (\pi \alpha /2 ) \sin (\pi \beta   ) } .\]

\subsection{Additional remarks}

It is possible to generalize the model further by permitting the local transition density to vary \emph{within} each half-line.
Then we have the transition kernel
\begin{equation}
\label{markov}
 \Pr [ Z_{n+1} \in B \mid Z_n = x ] = \int_B w_x (z -x ) \ud z,  \end{equation}
for all Borel sets $B \subseteq \R$. Here the local transition densities $w_x : \R \to \RP$ are 
Borel functions.
Variations of the oscillating random walk, within the general setting of \eqref{markov}, have also been studied in the literature. 
Sandri\'c \cite{sandric1,sandric2} supposes that the $w_x$ satisfy, 
for each $x \in \R$, $w_x (y) \sim c(x) | y |^{-1-\alpha (x)}$
as $|y| \to \infty$ for some measurable functions $c$ and $\alpha$;
he refers to this as a \emph{stable-like Markov chain}. Under a uniformity condition on the $w_x$,
and other mild technical conditions, Sandri\'c \cite{sandric1} obtained, via Foster--Lyapunov methods similar in spirit to those of the present paper,
 sufficient conditions
for recurrence and transience: essentially $\liminf_{x \to \infty} \alpha (x) >1$ is sufficient for recurrence and
$\limsup_{x \to \infty} \alpha (x) <1$ is sufficient for transience.
These results can be seen as a generalization of Theorem \ref{thm:symmetric_random_walk}. 
Some related results for models in continuous-time (L\'evy processes)
are given in \cite{sw,wang,sandric3}. 
Further results and an overview of the literature are provided in Sandri\'c's PhD thesis \cite{sandric0}.

\section{Many-dimensional random walks}
\label{sec:many-dim}

The next two examples show how versions of the oscillating random walk of Section~\ref{sec:oscillating} arise as embedded Markov chains in certain two-dimensional random walks.

\begin{example}
\label{ex:cp1}
Consider $\xi_n =(\xi_n^{(1)}, \xi_n^{(2)})$, $n \in \ZP$,
 a nearest-neighbour random walk on $\Z^2$
with transition probabilities\index{random walk!multidimensional}\index{random walk!simple}
\[ \Pr [ \xi_{n+1} = (y_1, y_2 ) \mid \xi_n = (x_1, x_2 ) ] = p (x_1, x_2 ; y_1, y_2 ) . \]
Suppose that the probabilities are given for $x_2 \neq 0$ by,
\begin{align}
\label{e:cp_jumps}
 p (x_1, x_2 ; x_1 , x_2 + 1 ) & =  p (x_1, x_2 ; x_1 , x_2 - 1 ) = \frac{1}{3} ; \nonumber\\
 p (x_1, x_2 ; x_1 + 1, x_2   ) & = \frac{1}{3}  \1 { x_2 < 0  }; \nonumber\\
 p (x_1, x_2 ; x_1 - 1, x_2   ) & = \frac{1}{3}  \1 { x_2 > 0  };  
\end{align}
(the rest being zero)
and for $x_2 = 0$ by $p (x_1, 0 ; x_1 , 1 ) = 1$ for all $x_1 > 0$, 
$p (x_1, 0 ; x_1 , -1 ) = 1$ for all $x_1 < 0$,
and $p (0, 0 ; 0 , 1 ) = p (0, 0 ; 0 , -1 ) = 1/2$. See   Figure~\ref{fig:camp_pet2} for an illustration.
\begin{figure}[!h]
\begin{center}
\begin{tikzpicture}[scale=1.3]
\foreach \i in {0,...,12} {
        \draw [dotted,gray] (0.4*\i,0) -- (0.4*\i,0.4*12);
                                   }
\foreach \i in {0,...,12} {
        \draw [dotted,gray] (0,0.4*\i) -- (0.4*12,0.4*\i);
                                   }
\draw [<->,>=stealth,thick] (0*6,0.4*6) -- (0.4*12,0.4*6); 
\draw [<->,>=stealth,thick] (0.4*6,0) -- (0.4*6,0.4*12); 	
\draw [ ->,>=stealth] (0.4*9,0.4*9) -- (0.4*9,0.4*8);
\draw [ ->,>=stealth] (0.4*9,0.4*9) -- (0.4*9,0.4*10);
\draw [ ->,>=stealth] (0.4*9,0.4*9) -- (0.4*8,0.4*9);
\draw [ ->,>=stealth] (0.4*3,0.4*9) -- (0.4*3,0.4*8);
\draw [ ->,>=stealth] (0.4*3,0.4*9) -- (0.4*3,0.4*10);
\draw [ ->,>=stealth] (0.4*3,0.4*9) -- (0.4*2,0.4*9);
\foreach \i in {1,...,5} {
	\draw [  ->,>=stealth] (0.4*\i,0.4*6) -- (0.4*\i,0.4*5);
			        }
\foreach \i in {7,...,11} {
	\draw [  ->,>=stealth] (0.4*\i,0.4*6) -- (0.4*\i,0.4*7);
			         }
\draw [ <->,>=stealth] (0.4*6,0.4*5) -- (0.4*6,0.4*7);
\draw [ ->,>=stealth] (0.4*9,0.4*3) -- (0.4*9,0.4*2);
\draw [ ->,>=stealth] (0.4*9,0.4*3) -- (0.4*9,0.4*4);
\draw [ ->,>=stealth] (0.4*9,0.4*3) -- (0.4*10,0.4*3);
\draw [ ->,>=stealth] (0.4*3,0.4*3) -- (0.4*3,0.4*2);
\draw [ ->,>=stealth] (0.4*3,0.4*3) -- (0.4*3,0.4*4);
\draw [ ->,>=stealth] (0.4*3,0.4*3) -- (0.4*4,0.4*3);
\end{tikzpicture}
\hskip5mm
 \includegraphics[width=0.4\textwidth]{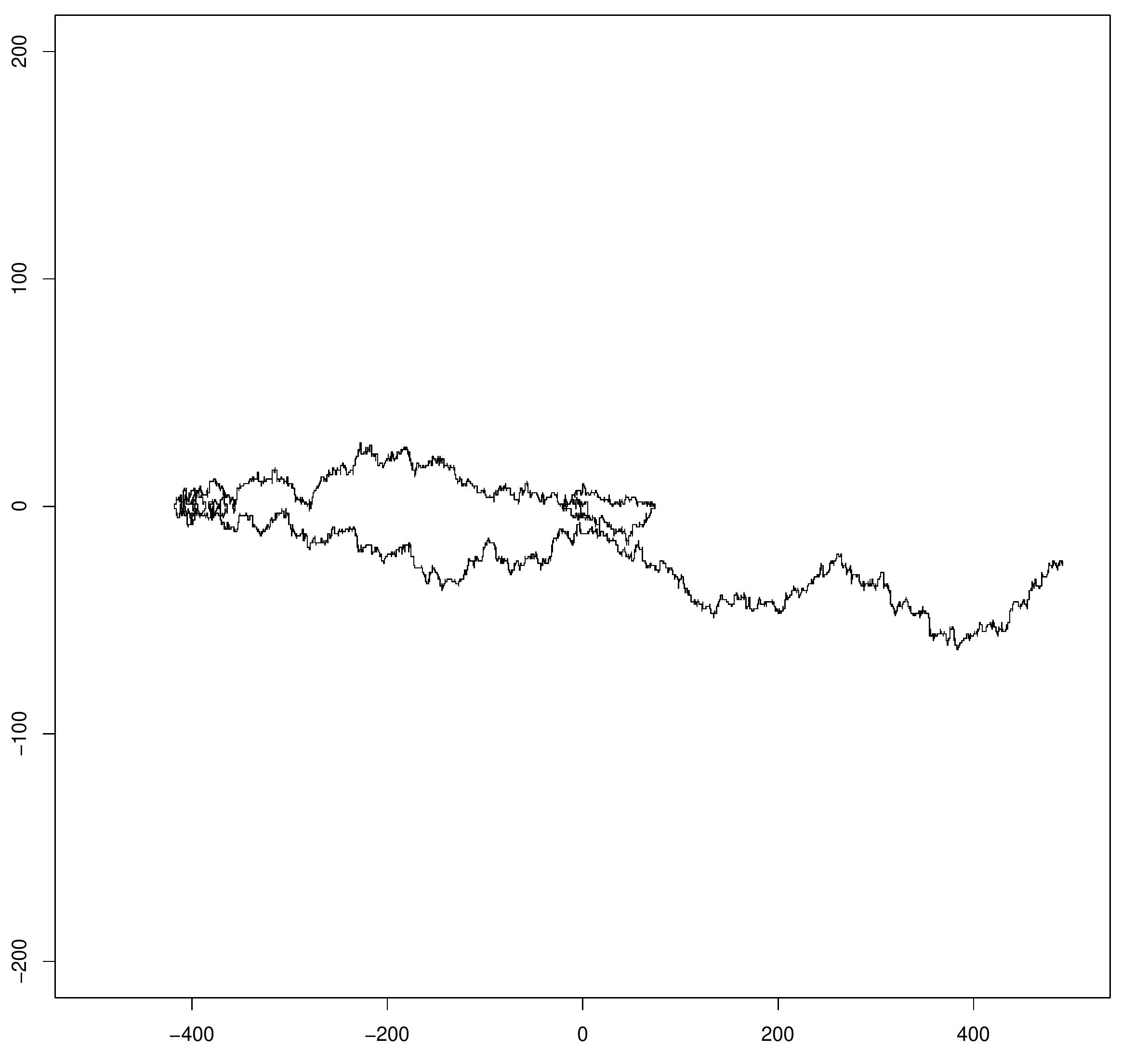}
 \caption{\label{fig:camp_pet2} Pictorial representation  of the non-homogeneous nearest-neighbour random walk on $\Z^2$
of Example~\ref{ex:cp2},
plus a simulated trajectory of 5000 steps of the walk.
We conjecture that the walk
is recurrent. }
 \end{center}
 \end{figure}

Set $\tau_0 :=0$ and define recursively $\tau_{k+1} = \min \{ n > \tau_k : \xi^{(2)}_n = 0 \}$
for $k \geq 0$; consider the embedded Markov chain
$X_n = \xi^{(1)}_{\tau_n}$. We show that $X_n$ is a discrete version of the oscillating random walk
described in Section~\ref{sec:oscillating}.
Indeed, $| \xi^{(2)}_n |$ is a reflecting random walk on $\ZP$ with increments taking values $-1, 0, +1$ each with probability
$1/3$. We then (see e.g.~\cite[p.~415]{feller2}) have that for some constant $c \in (0,\infty)$, 
\[ \Pr [ \tau_1 > r ] = (c + o(1) ) r^{-1/2}, ~\text{as}~ r \to \infty.\]
 Suppose that $\xi^{(1)}_0 = x >0$. 
Since between times $\tau_0$ and $\tau_1$, $\xi^{(1)}_n$ is monotone, we have
\[ \Pr [ \xi_{\tau_1}^{(1)} - \xi_{\tau_0}^{(1)} < -r ] \geq  \Pr [ \tau_1 > 3r + r^{3/4} ] - \Pr [ \xi^{(1)}_{3r + r^{3/4}} - \xi^{(1)}_0 \geq  -r ]  .\]
Here, by the Azuma--Hoeffding inequality, for some $\eps >0$ and all $r \geq 1$,
\[ \Pr [ \xi^{(1)}_{3r + r^{3/4}} - \xi^{(1)}_0  \geq  -r ] \leq \exp \{ - \eps r^{1/2} \} . \]
Similarly,
\[\Pr [ \xi_{\tau_1}^{(1)} - \xi_{\tau_0}^{(1)} < -r ] \leq   \Pr [ \tau_1 > 3r - r^{3/4} ] + \Pr [ \xi^{(1)}_{3r-  r^{3/4}} - \xi^{(1)}_0  \leq  -r ]   ,\]
where $\Pr [ \xi^{(1)}_{3r-  r^{3/4}} - \xi^{(1)}_0 \leq  -r ] \leq \exp \{ - \eps r^{1/2} \}$.
Combining these bounds, and using the symmetric argument for $\{ \xi_{\tau_1}^{(1)} > r\}$ when 
$\xi^{(1)}_0 = x <0$, 
we see that for $r > 0$, 
\begin{align}
\label{e:cp1_jump}
\Pr [ X_{n+1} - X_n < -r \mid X_n = x ] & = u(r), \text{ if } x > 0, \text{ and } \nonumber\\
\Pr [ X_{n+1} - X_n > r \mid X_n = x ] & = u(r), \text{ if } x < 0 , 
\end{align}
where $u(r)  =  ( c + o(1) ) r^{-1/2}$. 
Thus $X_n$ satisfies a discrete-space analogue of~\eqref{ass:one-sided} 
with $\alpha = \beta = 1/2$. This is the critical case identified in Theorem~\ref{thm:one-sided},
but that result does not cover this case due to the rate of convergence estimate for $u$;
 a finer analysis is required. We conjecture that the walk is recurrent.
\end{example}

\begin{example}
\label{ex:cp2}
We present two variations on the previous example, which are superficially similar but turn out to be less delicate.
First, modify the random walk of the previous example
by supposing that~\eqref{e:cp_jumps} holds
but replacing the behaviour at $x_2 = 0$ by
$p (x_1, 0 ; x_1 , 1 ) = p (x_1, 0 ; x_1, -1 ) = 1/2$
for all $x_1 \in \Z$. See the left-hand part of Figure~\ref{fig:camp_pet1} for an illustration.

The embedded process $X_n$ now has, for all $x \in \Z$ and for $r \geq 0$,
\begin{equation}
\label{e:cp2_jump}
 \Pr [ X_{n+1} - X_n < -r \mid X_n = x ] =  \Pr [ X_{n+1} - X_n > r \mid X_n = x ] = u(r) ,  
\end{equation}
where $u(r) =  (c/2) ( 1 + o(1) ) r^{-1/2}$. 
Thus $X_n$ is a random walk with symmetric increments, and the discrete version of our Theorem~\ref{thm:symmetric_random_walk} (and also a result of~\cite{shepp})
implies that the walk is transient. 
This walk was studied by Campanino and Petritis \cite{cp1,cp2}, who proved  transience via different methods. 
\begin{figure}[!h]
\begin{center}
\scalebox{1.3}{
\begin{tikzpicture}
    \foreach \i in {0,...,12} {
        \draw [dotted,gray] (0.4*\i,0) -- (0.4*\i,0.4*12);
    }
    \foreach \i in {0,...,12} {
        \draw [dotted,gray] (0,0.4*\i) -- (0.4*12,0.4*\i);
    }
		
	  \draw [<->,>=stealth,thick] (0*6,0.4*6) -- (0.4*12,0.4*6); 
    \draw [<->,>=stealth,thick] (0.4*6,0) -- (0.4*6,0.4*12); 
		
		 	\draw [ ->,>=stealth] (0.4*9,0.4*9) -- (0.4*9,0.4*8);
			\draw [ ->,>=stealth] (0.4*9,0.4*9) -- (0.4*9,0.4*10);
			\draw [ ->,>=stealth] (0.4*9,0.4*9) -- (0.4*8,0.4*9);
			
			 	\draw [ ->,>=stealth] (0.4*3,0.4*9) -- (0.4*3,0.4*8);
			\draw [ ->,>=stealth] (0.4*3,0.4*9) -- (0.4*3,0.4*10);
			\draw [ ->,>=stealth] (0.4*3,0.4*9) -- (0.4*2,0.4*9);
			
			\foreach \i in {1,...,11} {
			\draw [ <->,>=stealth] (0.4*\i,0.4*5) -- (0.4*\i,0.4*7);
			}
			
	  	\draw [ ->,>=stealth] (0.4*9,0.4*3) -- (0.4*9,0.4*2);
			\draw [ ->,>=stealth] (0.4*9,0.4*3) -- (0.4*9,0.4*4);
			\draw [ ->,>=stealth] (0.4*9,0.4*3) -- (0.4*10,0.4*3);
			
			 	\draw [ ->,>=stealth] (0.4*3,0.4*3) -- (0.4*3,0.4*2);
			\draw [ ->,>=stealth] (0.4*3,0.4*3) -- (0.4*3,0.4*4);
			\draw [ ->,>=stealth] (0.4*3,0.4*3) -- (0.4*4,0.4*3);
			
		\end{tikzpicture}
		\qquad
\begin{tikzpicture}
    \foreach \i in {0,...,12} {
        \draw [dotted,gray] (0.4*\i,0) -- (0.4*\i,0.4*12);
    }
    \foreach \i in {0,...,12} {
        \draw [dotted,gray] (0,0.4*\i) -- (0.4*12,0.4*\i);
    }
		
	  \draw [<->,>=stealth,thick] (0*6,0.4*6) -- (0.4*12,0.4*6); 
    \draw [<->,>=stealth,thick] (0.4*6,0) -- (0.4*6,0.4*12); 
				
	  	\draw [ ->,>=stealth] (0.4*9,0.4*9) -- (0.4*9,0.4*8);
			\draw [ ->,>=stealth] (0.4*9,0.4*9) -- (0.4*9,0.4*10);
			\draw [ ->,>=stealth] (0.4*9,0.4*9) -- (0.4*8,0.4*9);
			
			 	\draw [ ->,>=stealth] (0.4*3,0.4*9) -- (0.4*3,0.4*8);
			\draw [ ->,>=stealth] (0.4*3,0.4*9) -- (0.4*3,0.4*10);
			\draw [ ->,>=stealth] (0.4*3,0.4*9) -- (0.4*2,0.4*9);
			
			\foreach \i in {1,...,5} {
			\draw [ ->,>=stealth] (0.4*\i,0.4*6) -- (0.4*\i,0.4*5);
			}
			
			\foreach \i in {6,...,11} {
			\draw [  <->,>=stealth] (0.4*\i,0.4*5) -- (0.4*\i,0.4*7);
			}
			
	  	\draw [ ->,>=stealth] (0.4*9,0.4*3) -- (0.4*9,0.4*2);
			\draw [ ->,>=stealth] (0.4*9,0.4*3) -- (0.4*9,0.4*4);
			\draw [ ->,>=stealth] (0.4*9,0.4*3) -- (0.4*10,0.4*3);
			
			 	\draw [ ->,>=stealth] (0.4*3,0.4*3) -- (0.4*3,0.4*2);
			\draw [ ->,>=stealth] (0.4*3,0.4*3) -- (0.4*3,0.4*4);
			\draw [ ->,>=stealth] (0.4*3,0.4*3) -- (0.4*4,0.4*3);
			
		\end{tikzpicture}		
		}
 \caption{Pictorial representation of the two non-homogeneous nearest-neighbour random walks on $\Z^2$ of
Example~\ref{ex:cp2}. Each of these walks is transient.}
 \label{fig:camp_pet1}
 \end{center}
 \end{figure}
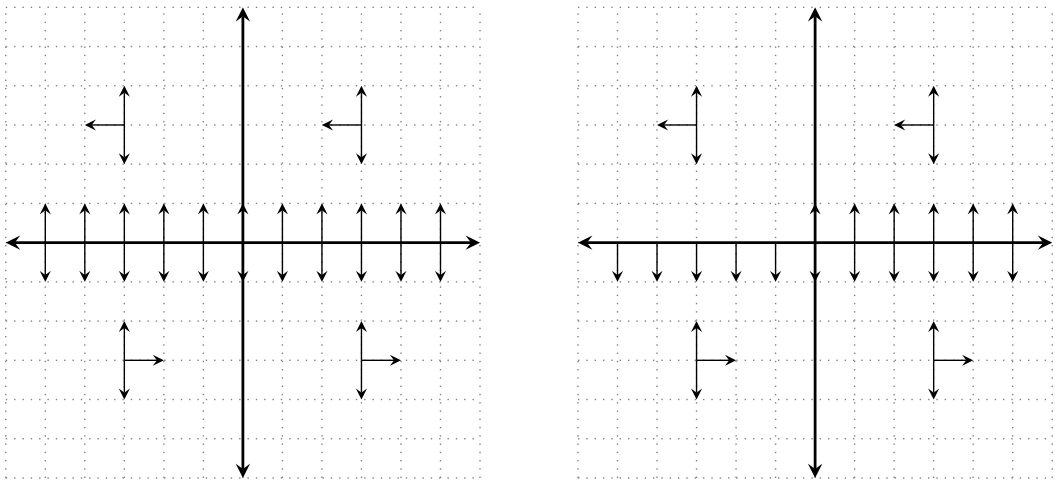

Next, modify the random walk of Example~\ref{ex:cp1}
by supposing that~\eqref{e:cp_jumps} holds
but replacing the behaviour at $x_2 = 0$ by
$p (x_1, 0 ; x_1 , 1 ) = p (x_1, 0 ; x_1, -1 ) = 1/2$ if $x_1 \geq 0$,
and $p(x_1, 0 ; x_1 , -1) = 1$ for $x_1 < 0$. See the right-hand part of Figure~\ref{fig:camp_pet1} for an illustration.
This time the walk takes a symmetric increment as at~\eqref{e:cp2_jump} when $x \geq 0$
but a one-sided increment as at~\eqref{e:cp1_jump} when $x < 0$. In this case the discrete version of our Theorem~\ref{thm:mixture} (and also a result of~\cite{rf}) shows that the walk is transient.
\end{example}

One may obtain the general model on $\ZP \times \cS$ as an embedded process for a random walk on
complexes of half-spaces, generalizing the examples considered here; we leave this to the interested reader.

\section{Recurrence classification in the non-critical cases}
\label{sec:proofs}

\subsection{Lyapunov functions}
\label{sec:lyapunov_functions}

Our proofs are based on demonstrating appropriate Lyapunov functions;
that is, for suitable $\varphi: \RP \times \cS \to \RP$ we study
$Y_n = \varphi ( X_n , \xi_n )$ such that $Y_n$ has
appropriate local supermartingale or submartingale properties for the one-step
mean increments
\begin{align*}
D\varphi(x,i) & := \Exp \left[ \varphi(X_{n+1}, \xi_{n+1} ) - \varphi (X_n, \xi_n ) \mid (X_n, \xi_n ) = (x, i) \right] 
\\
& = \Exp_{x, i} \left[ \varphi(X_{1}, \xi_{1} ) - \varphi (X_0, \xi_0 ) \right] . \end{align*}

First we note some consequences of the transition law \eqref{complex-transition}.
Let $\varphi: \RP \times \cS \to \RP$ be measurable. Then, we have from \eqref{complex-transition} that, for $(x, i) \in \RP \times \cS$,
\begin{align}
\label{general-function}
D\varphi(x,i)
&= \sum_{j \in \cS} p (i,j) \int_{-\infty}^{-x} \left( \varphi( -x-y , j) - \varphi(x,i) \right) w_i (y) \ud y
\nonumber\\
& {} \hskip 3cm {} +  \int_{-x}^\infty \left( \varphi(x+y, i) - \varphi(x,i) \right) w_i (y) \ud y.
\end{align}

Our primary Lyapunov function  is roughly of the form $x \mapsto | x| ^\nu$, $\nu \in \R$,
but weighted according to an $\cS$-dependent
component (realised
by a collection of multiplicative weights $\lambda_k$); these weights provide  a crucial technical tool.

For $\nu \in \R$ and $x \in \R$ we write
\[ f_\nu (x) := (1 + |x| )^\nu .\]
Then, for parameters $\lambda_k > 0$ for each $k \in \cS$, define
for $x \in \RP$ and $k \in \cS$,
\begin{equation}
\label{fdef}
 f_\nu (x , k ) := \lambda_k f_\nu (x) = \lambda_k (1+x)^\nu.
\end{equation}
Now for this Lyapunov function, \eqref{general-function} gives
\begin{align}
\label{multiplicative-function}
Df_\nu (x, i) &=
\sum_{j \in \cS} p (i,j)  \int_{-\infty}^{-x} \left( \lambda_j f_\nu (x+y) - \lambda_i  f_\nu (x) \right) w_i (y) \ud y
 \nonumber\\
& {} \hskip 2cm {}
+ \lambda_i \int_{-x}^\infty \left( f_\nu ( x+ y) - f_\nu (x) \right) w_i (y) \ud y.\end{align}
Depending on whether $i \in \Ss$ or $i \in \So$, the above integrals can be expressed in terms of $v_i$ as follows.
For $i\in\Ss$, 
\begin{align}
\label{conditional-increment-sym}
Df_\nu (x, i) &=\sum_{j \in \cS} p (i,j)  \frac{\lambda_j}{2}  \int_{x}^\infty  
f_\nu (y-x) v_i (y) \ud y  - \frac{\lambda_i}{2} \int_x^\infty  f_\nu (x) v_i (y) \ud y
\nonumber\\
& {} \hskip 0.7cm {}+ \frac{\lambda_i}{2}  \int_{0}^x 
\left( f_\nu ( x+ y) + f_\nu (x-y) - 2f_\nu (x) \right) v_i ( y ) \ud y  \nonumber\\
& {} \hskip 0.7cm {}+ \frac{\lambda_i}{2}   \int_x^\infty \left( f_\nu ( x+ y) - f_\nu (x) \right) v_i (y) \ud y .
\end{align}
For $i\in\So$, 
\begin{align}
 \label{conditional-increment-one}
Df_\nu (x, i) & =\sum_{j \in \cS} p (i,j) \lambda_j \int_x^\infty   f_\nu (y-x) v_i (y) \ud y 
 - \lambda_i \int_x^\infty f_\nu (x)  v_i (y) \ud y \nonumber\\
& {} \hskip 0.7cm {}
+\lambda_i
 \int_{0}^x \left( f_\nu ( x - y) - f_\nu (x) \right) v_i (y) \ud y.
 \end{align}

\subsection{Estimates of functional increments}
\label{sec:increments}

In the course of our proofs, we need various integral estimates 
that can be expressed in terms of classical transcendental functions. 
For the convenience of the reader, we gather all necessary integrals in Lemmas~\ref{lem:i-integrals} and~\ref{lem:j-integrals};
the proofs of these results are deferred until Section~\ref{sec:integrals}.  
Recall that the Euler gamma function $\Gamma$ satisfies the functional equation $z \Gamma (z) = \Gamma (z+1)$,
and the hypergeometric function ${}_m F_n$ is defined via a power series (see~\cite{as}).

\begin{lemma}
\label{lem:i-integrals}
Suppose that $\alpha > 0$ and $-1<\nu<\alpha$. Then
\begin{align*}
i_0^{\nu,\alpha} &:=\int_1^\infty \frac {(1+u)^\nu-1}{u^{1+\alpha}} \ud u= 
 \frac{1}{\alpha-\nu} \ \HG ( -\nu, \alpha-\nu ; \alpha-\nu+1 ; -1) -\frac{1}{\alpha};\\
i_{2,0}^\alpha & :=\int_1^\infty \frac{1}{u^{1+\alpha}} \ud u= \frac{1}{\alpha}; \\
i_{2,1}^{\nu,\alpha} &:= \int_1^\infty \frac {(u-1)^\nu}{u^{1+\alpha}} \ud u = \frac{\Gamma(1+\nu)\Gamma(\alpha-\nu)}{\Gamma(1+\alpha)}.
\end{align*}
Suppose that $\alpha \in (0,2)$ and $\nu > -1$. Then
\[ i_1^{\nu,\alpha}  := \int_0^1 \frac {(1+u)^\nu+(1-u)^\nu-2 }{u^{1+\alpha}} \ud u = \frac{\nu(\nu-1)}{2-\alpha}
\ \HGG (1,1-\tfrac{\nu}{2},1-\tfrac{\alpha}{2}, \tfrac{3-\nu}{2} ; \tfrac{3}{2},2,  2-\tfrac{\alpha}{2} ;  1) . \]
Suppose that $\alpha \in (0,1)$ and $\nu > -1$. Then
\[\tilde{i}_1^{\nu,\alpha}:=  \int_0^1 \frac {(1-u)^\nu-1 }{u^{1+\alpha}} \ud u= \frac{1}{\alpha}
\left( 1 - \frac{\Gamma(1+\nu)\Gamma(1-\alpha)}{\Gamma(1-\alpha+\nu)}\right).\]
\end{lemma}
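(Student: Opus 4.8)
The plan is to evaluate each integral either by a direct substitution reducing it to a Beta integral, or by recognising it as an Euler-type integral representation of a hypergeometric function. I treat the five integrals in turn.

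For $i_{2,0}^\alpha = \int_1^\infty u^{-1-\alpha}\,\mathrm du = \alpha^{-1}$ this is immediate. For $i_{2,1}^{\nu,\alpha} = \int_1^\infty (u-1)^\nu u^{-1-\alpha}\,\mathrm du$, substitute $u = 1/t$ so that $u-1 = (1-t)/t$ and $\mathrm du = -t^{-2}\,\mathrm dt$; the integral becomes $\int_0^1 t^{\alpha - \nu - 1}(1-t)^\nu\,\mathrm dt = B(\alpha - \nu, \nu+1) = \Gamma(1+\nu)\Gamma(\alpha-\nu)/\Gamma(1+\alpha)$, valid since $-1<\nu<\alpha$ guarantees both arguments of $B$ are positive and the integral at $\infty$ converges. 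Similarly $\tilde i_1^{\nu,\alpha} = \int_0^1 \bigl((1-u)^\nu - 1\bigr)u^{-1-\alpha}\,\mathrm du$: since $\alpha \in (0,1)$ the subtraction of $1$ makes the integrand integrable at $0$. I would write $\tilde i_1^{\nu,\alpha} = \int_0^1 (1-u)^\nu u^{-1-\alpha}\,\mathrm du - \int_0^1 u^{-1-\alpha}\,\mathrm du$ as a (divergent) formal split and instead integrate by parts: $\int_0^1 \bigl((1-u)^\nu-1\bigr)u^{-1-\alpha}\,\mathrm du = \bigl[ -\tfrac{1}{\alpha}u^{-\alpha}\bigl((1-u)^\nu-1\bigr)\bigr]_0^1 - \tfrac{\nu}{\alpha}\int_0^1 u^{-\alpha}(1-u)^{\nu-1}\,\mathrm du$, where the boundary term vanishes at both ends (at $u=1$ because $(1-u)^\nu - 1 \to -1$ times $u^{-\alpha}\to 1$... ) — more carefully, the boundary term at $u=1$ is $-\tfrac1\alpha(0-1) = \tfrac1\alpha$ and at $u=0$ is $0$ — giving $\tilde i_1^{\nu,\alpha} = \tfrac{1}{\alpha} - \tfrac{\nu}{\alpha} B(1-\alpha,\nu) = \tfrac1\alpha\bigl(1 - \tfrac{\nu\Gamma(1-\alpha)\Gamma(\nu)}{\Gamma(1-\alpha+\nu)}\bigr) = \tfrac1\alpha\bigl(1 - \tfrac{\Gamma(1+\nu)\Gamma(1-\alpha)}{\Gamma(1-\alpha+\nu)}\bigr)$ using $\nu\Gamma(\nu) = \Gamma(1+\nu)$.

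For $i_0^{\nu,\alpha} = \int_1^\infty \bigl((1+u)^\nu - 1\bigr)u^{-1-\alpha}\,\mathrm du$, I split off the elementary piece: $\int_1^\infty u^{-1-\alpha}\,\mathrm du = 1/\alpha$, so $i_0^{\nu,\alpha} = \int_1^\infty (1+u)^\nu u^{-1-\alpha}\,\mathrm du - \tfrac1\alpha$, and the first integral is to be recognised as a hypergeometric value. Substituting $u = 1/t$ as before turns $\int_1^\infty (1+u)^\nu u^{-1-\alpha}\,\mathrm du$ into $\int_0^1 (1+t)^\nu t^{\alpha-\nu-1}\,\mathrm dt$; this is exactly the Euler integral $\int_0^1 t^{b-1}(1-zt)^{-a}\,\mathrm dt = \tfrac1b\,\HG(a,b;b+1;z)$ with $a = -\nu$, $b = \alpha-\nu$, $z = -1$, yielding $\tfrac{1}{\alpha-\nu}\HG(-\nu,\alpha-\nu;\alpha-\nu+1;-1)$. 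Hence $i_0^{\nu,\alpha} = \tfrac{1}{\alpha-\nu}\HG(-\nu,\alpha-\nu;\alpha-\nu+1;-1) - \tfrac1\alpha$, with convergence at $0$ from $\alpha-\nu>0$ and at $\infty$ from $\nu<\alpha$; the subtracted $1$ in the original integrand is what controls the behaviour at $u=1$... (actually there is no singularity at $u=1$; the "$-1$" is merely a convenient normalisation making the answer match the boundary term $1/\alpha$).

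The genuinely laborious case is $i_1^{\nu,\alpha} = \int_0^1 \bigl((1+u)^\nu + (1-u)^\nu - 2\bigr)u^{-1-\alpha}\,\mathrm du$ for $\alpha \in (0,2)$: here the combination $(1+u)^\nu+(1-u)^\nu-2 = \nu(\nu-1)u^2 + O(u^4)$ vanishes to second order at $u=0$, which is why $\alpha < 2$ suffices. I would expand $(1+u)^\nu + (1-u)^\nu = 2\sum_{m\geq 0}\binom{\nu}{2m}u^{2m}$ and integrate term by term:
\[
i_1^{\nu,\alpha} = 2\sum_{m\geq 1}\binom{\nu}{2m}\int_0^1 u^{2m-1-\alpha}\,\mathrm du = 2\sum_{m\geq 1}\binom{\nu}{2m}\frac{1}{2m-\alpha},
\]
which converges for $\alpha<2$. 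Then I rewrite $\binom{\nu}{2m} = \frac{\nu(\nu-1)}{2}\cdot\frac{(-\nu/2+1)_{m-1}\,( (1-\nu)/2 +1/2)_{\cdots}}{\cdots}$... concretely, shifting the index to $m\mapsto m+1$ and carefully converting the ratio of consecutive terms into Pochhammer symbols: the term ratio of $a_m := 2\binom{\nu}{2m+2}\frac{1}{2m+2-\alpha}$ is a rational function of $m$ whose numerator factors contribute the upper parameters $1,\,1-\tfrac\nu2,\,1-\tfrac\alpha2,\,\tfrac{3-\nu}{2}$ and whose denominator factors contribute the lower parameters $\tfrac32,\,2,\,2-\tfrac\alpha2$, with prefactor $a_0 = \nu(\nu-1)/(2-\alpha)$. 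Matching the term ratio is the fiddly step; I expect this bookkeeping — expressing $\binom{\nu}{2m+2}$ via the duplication-type identity $\binom{\nu}{2m+2} = \binom{\nu}{2}\frac{(1-\tfrac\nu2)_m(\tfrac{3-\nu}{2})_m}{(\tfrac32)_m(2)_m}$ and then combining with $\frac{2-\alpha}{2m+2-\alpha} = \frac{(1-\tfrac\alpha2)_m}{(2-\tfrac\alpha2)_m}$ — to be the main obstacle, being purely a matter of care rather than depth. Convergence of each interchange of sum and integral is justified by dominated convergence / absolute convergence of the binomial series on $[0,1)$, the endpoint $u=1$ being harmless since the integrand is bounded there.
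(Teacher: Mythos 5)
Your treatment of $i_{2,0}^{\alpha}$, $i_{2,1}^{\nu,\alpha}$, $i_0^{\nu,\alpha}$ and $i_1^{\nu,\alpha}$ coincides with the paper's: the substitution $u=1/t$ reducing to a Beta integral, the Euler integral representation of $\HG$ evaluated at $z=-1$, and the binomial expansion with term-by-term integration followed by term-ratio matching for the $\HGG$ are exactly the steps used there, and your Pochhammer bookkeeping for $i_1^{\nu,\alpha}$ checks out. (One small caveat you share with the paper: for $\nu\in(-1,0]$ the series $\sum_m \bigl|\binom{\nu}{2m}\bigr|$ diverges, since $\bigl|\binom{\nu}{2m}\bigr|\asymp m^{-1-\nu}$, so the interchange of sum and integral at $u=1$ is better justified by noting that $\binom{\nu}{2m}>0$ for all $m\ge1$ when $\nu\in(-1,0)$ and applying Tonelli, rather than by absolute convergence.)

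The one genuine gap is in $\tilde i_1^{\nu,\alpha}$, where you depart from the paper: the paper truncates the integral at $u\ge t>0$, evaluates it as a Gauss hypergeometric function, and extracts the $t\downarrow0$ limit via the connection formula 15.3.6 of Abramowitz--Stegun, whereas you integrate by parts. Your route is more elementary, but as written it is valid only for $\nu>0$: when $\nu\in(-1,0)$ the boundary term $-\tfrac1\alpha u^{-\alpha}\bigl((1-u)^\nu-1\bigr)$ tends to $-\infty$ as $u\uparrow1$ (it does not equal $\tfrac1\alpha$), and the companion integral $\int_0^1 u^{-\alpha}(1-u)^{\nu-1}\,\ud u=B(1-\alpha,\nu)$ diverges at $u=1$ since it requires $\nu>0$. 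This range cannot be discarded: the formula for $\tilde i_1^{\nu,\alpha}$ feeds into $\Ro(\alpha,\nu)$ in Lemma~\ref{lem:Df}, which is applied with $\nu\in(-1,0)$ in Proposition~\ref{pro:main-supermartingale}(ii) (the transient case). The repair is routine --- either perform the integration by parts on $[0,1-\eps]$ and verify that the divergences of the boundary term and of $\tfrac\nu\alpha\int_0^{1-\eps}u^{-\alpha}(1-u)^{\nu-1}\,\ud u$ cancel as $\eps\downarrow0$, or note that both sides of the identity are analytic in $\nu$ on $\{\nu>-1\}$ for fixed $\alpha\in(0,1)$ and extend from $\nu>0$ by the identity theorem --- but one of these steps must be supplied.
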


Recall that the digamma function is $\psi(z) = \frac{\ud}{\ud z} \log \Gamma (z) = \Gamma' (z) / \Gamma (z)$, 
which has $\psi (1) = - \gamma$ where $\gamma \approx 0.5772$ is Euler's constant. 

\begin{lemma}
\label{lem:j-integrals}
Suppose that $\alpha >0$. Then
\begin{align*}
j_0^\alpha &:=\int_1^\infty \frac{\log (1+u)}{u^{1+\alpha}} \ud u= \frac{1}{\alpha}\left(\psi(\alpha)-\psi(\tfrac{\alpha}{2}) \right); \\
j_2^\alpha &:=\int_1^\infty \frac{\log (u-1)}{u^{1+\alpha}} \ud u= -\frac{1}{\alpha}(\gamma +\psi(\alpha)). 
\end{align*}
Suppose that $\alpha \in (0,2)$. Then
\[ j_1^\alpha :=\int_0^1 \frac{\log (1-u^2)}{u^{1+\alpha}} \ud u= \frac{1}{\alpha}\left(\gamma +\psi(1-\tfrac{\alpha}{2})\right) 
 .\]
Suppose that $\alpha \in (0,1)$. Then
\[ \tilde{j}_1^\alpha :=\int_0^1 \frac{\log (1-u)}{u^{1+\alpha}} \ud u= \frac{1}{\alpha}\left(\gamma +\psi(1-\alpha)\right)
. \]
\end{lemma}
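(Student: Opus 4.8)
The plan is to observe that each of the four identities is the $\nu$-derivative at $\nu = 0$ of one of the Beta-type integrals already evaluated in Lemma~\ref{lem:i-integrals} (in one case after a change of variable), so the whole proof reduces to differentiating under the integral sign and then simplifying with the standard identities $\Gamma'(z) = \Gamma(z)\psi(z)$, $\psi(1) = -\gamma$, and the functional equations $\Gamma(z+1) = z\Gamma(z)$, $\psi(z+1) = \psi(z) + z^{-1}$. Differentiation under the integral sign is legitimate at the interior parameter value $\nu = 0$ because near each singular endpoint the integrand and its $\nu$-derivative are dominated, uniformly for $\nu$ in a small neighbourhood of $0$, by a fixed integrable function — by the same elementary estimates that give convergence of the $i$-integrals — and I would dispose of this in a sentence.

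For $j_2^\alpha$: since $\partial_\nu (u-1)^\nu = (u-1)^\nu\log(u-1)$, we have $j_2^\alpha = \partial_\nu i_{2,1}^{\nu,\alpha}\,|_{\nu=0}$, and differentiating the closed form $i_{2,1}^{\nu,\alpha} = \Gamma(1+\nu)\Gamma(\alpha-\nu)/\Gamma(1+\alpha)$ at $\nu=0$ gives $\Gamma(\alpha)\bigl(\psi(1)-\psi(\alpha)\bigr)/\Gamma(1+\alpha) = \alpha^{-1}\bigl(-\gamma-\psi(\alpha)\bigr)$. Entirely analogously $\partial_\nu\bigl[(1-u)^\nu - 1\bigr] = (1-u)^\nu\log(1-u)$, so $\tilde j_1^\alpha = \partial_\nu \tilde i_1^{\nu,\alpha}\,|_{\nu=0}$, and differentiating $\tilde i_1^{\nu,\alpha} = \alpha^{-1}\bigl(1 - \Gamma(1+\nu)\Gamma(1-\alpha)/\Gamma(1-\alpha+\nu)\bigr)$ at $\nu=0$ gives $\alpha^{-1}\bigl(\psi(1-\alpha)-\psi(1)\bigr) = \alpha^{-1}\bigl(\gamma+\psi(1-\alpha)\bigr)$. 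For $j_1^\alpha$ the substitution $u = t^{1/2}$ turns the integral into $\tfrac12\int_0^1 t^{-1-\alpha/2}\log(1-t)\,\ud t = \tfrac12\,\tilde j_1^{\alpha/2}$, and since $\alpha<2$ keeps $\alpha/2$ in the admissible range $(0,1)$, the value just obtained gives $j_1^\alpha = \alpha^{-1}\bigl(\gamma + \psi(1-\tfrac\alpha2)\bigr)$.

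The integral $j_0^\alpha$ is the one I expect to be the main obstacle, because its natural parent $i_0^{\nu,\alpha}$ is a ${}_2F_1$ whose parameter derivative is cumbersome. I would instead evaluate the combination $j_0^\alpha + j_2^\alpha = \int_1^\infty u^{-1-\alpha}\log(u^2-1)\,\ud u$: the substitution $u = s^{-1/2}$ converts this to $\tfrac12\int_0^1 s^{\alpha/2-1}\bigl(\log(1-s) - \log s\bigr)\,\ud s$, whose two summands are elementary, namely $\int_0^1 s^{\alpha/2-1}\log(1-s)\,\ud s = \partial_b B(\tfrac\alpha2,b)\,|_{b=1}$ and $\int_0^1 s^{\alpha/2-1}\log s\,\ud s = -4\alpha^{-2}$; simplification yields $j_0^\alpha + j_2^\alpha = -\alpha^{-1}\bigl(\gamma + \psi(\tfrac\alpha2)\bigr)$, and subtracting the value of $j_2^\alpha$ already found gives $j_0^\alpha = \alpha^{-1}\bigl(\psi(\alpha) - \psi(\tfrac\alpha2)\bigr)$, as required. (One could alternatively differentiate the hypergeometric representation of $i_0^{\nu,\alpha}$ directly, arriving at the series $\sum_{n\geq1}\frac{(-1)^{n+1}}{n}\cdot\frac{\alpha}{\alpha+n}$ and summing it via $\sum_{k\geq1}\bigl(k^{-1} - (a+k)^{-1}\bigr) = \gamma + \psi(a+1)$ and the digamma duplication formula, but this route is messier and I would avoid it.)

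In short, three of the four evaluations are one-line consequences of Lemma~\ref{lem:i-integrals}, and the only genuine work is the indirect computation of $j_0^\alpha$, together with the routine but necessary check that the endpoint behaviour — at $u=1$ and $u=\infty$ for the integrals over $(1,\infty)$, and at $u=0,1$ for those over $(0,1)$ — permits differentiation under the integral sign.
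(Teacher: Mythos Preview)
Your proof is correct and self-contained, but it differs from the paper's own argument. The paper simply appeals to tables of Mellin transforms in Erd\'elyi~\emph{et al.}\ for $j_0^\alpha$, $j_2^\alpha$, and $\tilde j_1^\alpha$ directly, and then (exactly as you do) reduces $j_1^\alpha$ to $\tfrac12\tilde j_1^{\alpha/2}$ by the substitution $u^2=s$. Your differentiation approach for $j_2^\alpha$ and $\tilde j_1^\alpha$ is in fact precisely what the paper acknowledges in the Remark immediately after the lemma (``The $j$ integrals can be obtained as derivatives with respect to $\nu$ of the $i$ integrals, evaluated at $\nu=0$''), though the paper does not carry it out. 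Your treatment of $j_0^\alpha$ via the combination $j_0^\alpha+j_2^\alpha$ and the substitution $u=s^{-1/2}$ is a genuinely nice device that sidesteps both the table lookup and the awkward hypergeometric derivative; it is more work than citing a reference, but it makes the lemma independent of~\cite{emot1}. In short: the paper trades self-containment for brevity, while your route trades a few extra lines of computation for an argument that stands on its own.
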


\begin{remark}
The $j$ integrals can be obtained as derivatives with respect to $\nu$ of the  $i$ integrals, evaluated at $\nu=0$.
\end{remark}

The next result collects estimates for our integrals in the expected functional increments~\eqref{conditional-increment-sym} and~\eqref{conditional-increment-one} in terms of the integrals in Lemma~\ref{lem:i-integrals}.

\begin{lemma}
\label{lem:all-integrals}
Suppose that $v \in \fD_{\alpha,c}$.
For $\alpha >0$ and $-1 < \nu < \alpha$ we have
\begin{align}
\label{int21}
\int_x^\infty f_\nu ( y -x ) v (y) \ud y & = c x^{\nu - \alpha} i^{\nu, \alpha}_{2,1} + o (x^{\nu - \alpha} ) ;\\
\label{int20}
\int_x^\infty f_\nu (x) v  (y) \ud y & = c x^{\nu -\alpha} i_{2,0}^{\nu,\alpha} + o (x^{\nu - \alpha} ) ;\\
\label{int0}
\int_x^\infty \left( f_\nu (x+y) - f_\nu (x) \right) v (y) \ud y & = c x^{\nu -\alpha} i_{0}^{\nu,\alpha}+ o (x^{\nu - \alpha} ) .
\end{align}
For $\alpha \in (0,2)$ and $\nu > -1$ we have
\begin{equation}
\label{int1}
 \int_{0}^x \left( f_\nu (x+y) + f_\nu (x-y) - 2 f_\nu (x) \right) v ( y ) \ud y = c x^{\nu - \alpha} i_1^{\nu,\alpha} + o (x^{\nu - \alpha} ) . 
\end{equation}
For $\alpha \in (0,1)$ and $\nu > -1$ we have
\begin{equation}
\label{intt1}
 \int_{0}^x \left( f_\nu (x-y) - f_\nu (x) \right) v (y) \ud y = c x^{\nu - \alpha} \tilde i_1^{\nu,\alpha} + o (x^{\nu - \alpha} ) . \end{equation}
Moreover, if $v \in \fD_{\alpha,c}^+$ then stronger versions of  all of the above estimates 
hold with $ o (x^{\nu - \alpha} ) $ replaced by $O (x^{\nu - \alpha-\delta} ) $
for some $\delta >0$. 
\end{lemma}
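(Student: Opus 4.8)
The plan is to reduce all five estimates to one scaling argument followed by a dominated-convergence passage to the limit. Take for instance the integral in \eqref{int21}. Since $v(y)=c(y)y^{-1-\alpha}$ on $(0,\infty)$ with $\sup_y c(y)<\infty$ and $c(y)\to c$, the substitution $y=xu$ turns $\int_x^\infty f_\nu(y-x)v(y)\,\ud y$ into $x^{-\alpha}\int_1^\infty (1+x(u-1))^\nu\,c(xu)\,u^{-1-\alpha}\,\ud u$; writing $(1+x(u-1))^\nu=x^\nu(u-1+x^{-1})^\nu$ and extracting the $x^\nu$, the remaining integrand $(u-1+x^{-1})^\nu c(xu)u^{-1-\alpha}$ tends pointwise on $\{u>1\}$ to $c\,(u-1)^\nu u^{-1-\alpha}$, whose integral is exactly $i_{2,1}^{\nu,\alpha}$ by Lemma~\ref{lem:i-integrals}. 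The identical substitution in \eqref{int0}, \eqref{int1} and \eqref{intt1} yields, after extracting $x^\nu$, the pointwise limits $c\bigl[(1+u)^\nu-1\bigr]u^{-1-\alpha}$ on $\{u>1\}$, $c\bigl[(1+u)^\nu+(1-u)^\nu-2\bigr]u^{-1-\alpha}$ on $(0,1)$ and $c\bigl[(1-u)^\nu-1\bigr]u^{-1-\alpha}$ on $(0,1)$, i.e., precisely the integrands defining $i_0^{\nu,\alpha}$, $i_1^{\nu,\alpha}$ and $\tilde i_1^{\nu,\alpha}$; while \eqref{int20} is immediate since $f_\nu(x)$ factors out and $\int_x^\infty v(y)\,\ud y=(c+o(1))x^{-\alpha}/\alpha$. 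So, modulo justifying the limits, every claim follows.

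The crux, which is also the main obstacle, is producing an $x$-independent integrable majorant for the rescaled integrands near the endpoints at which the limiting integrand is singular; it is precisely here that the stated ranges of $\nu$ and $\alpha$ are used. The only genuinely singular cases are: \eqref{int21} at $u\downarrow 1$, where the bare factor $(u-1)^\nu$ is integrable exactly because $\nu>-1$; \eqref{int1} at $u\downarrow 0$, where a second-order Taylor expansion bounds the bracket $(1+x(1+u))^\nu+(1+x(1-u))^\nu-2(1+x)^\nu$ uniformly by a multiple of $(xu)^2(1+x(1-u))^{\nu-2}$, so that (using $1+x(1-u)\ge c x$ for $u\le\tfrac12$) after dividing by $x^{\nu-\alpha}$ the integrand is dominated near $0$ by a multiple of $u^{1-\alpha}$, integrable exactly because $\alpha<2$; and \eqref{intt1} at $u\downarrow 0$, where a first-order expansion gives a $u^{-\alpha}$-type majorant, integrable exactly because $\alpha<1$. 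At the outer endpoint $u\to\infty$ (for \eqref{int21}, \eqref{int20}, \eqref{int0}) the bracket grows like $u^\nu$ against the weight $u^{-1-\alpha}$, so integrability there is exactly $\nu<\alpha$; here the uniform bound $(1+x(u-1))^\nu\le C_\nu\,x^\nu\bigl((u-1)^\nu+1\bigr)$, valid for $x\ge 1$, together with $\sup_y c(y)<\infty$ suffices, and near $u\uparrow 1$ in \eqref{int1} and \eqref{intt1} one bounds the two one-sided differences separately, again invoking $\nu>-1$. With these majorants, dominated convergence delivers \eqref{int21}--\eqref{intt1}.

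For the refinement under $v\in\fD_{\alpha,c}^+$, write $c(y)=c+r(y)$ with $|r(y)|\le Cy^{-\delta}$. Running the $r$-part through the same scaling inserts an extra factor $(xu)^{-\delta}$ into the $u$-integral, making that contribution $O(x^{\nu-\alpha-\delta'})$ for some $\delta'\in(0,\delta]$ (one only shrinks $\delta'$ so the $u$-integral still converges at $\infty$). For the leading $c$-part one must quantify the residual discrepancy between $(1+|t|)^\nu$ and $|t|^\nu$: from $(1+x(u-1))^\nu-x^\nu(u-1)^\nu=x^\nu(u-1)^\nu\bigl[(1+\tfrac{1}{x(u-1)})^\nu-1\bigr]$, splitting the $u$-integral at $x(u-1)=1$, one finds this costs a factor $O\bigl(x^{-\min\{1,1+\nu\}}\bigr)$, and likewise for the other brackets; taking $\delta$ in the statement to be the smallest of $\delta'$ and these exponents yields the asserted $O(x^{\nu-\alpha-\delta})$ error. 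I expect the endpoint domination in the previous paragraph to be the only genuinely delicate point; the remainder is careful bookkeeping against the explicit constants of Lemma~\ref{lem:i-integrals}.
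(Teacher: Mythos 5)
Your argument is correct, and it reaches the same destination as the paper's proof (rescale by $x$, identify the limit with the explicit integrals of Lemma~\ref{lem:i-integrals}), but the mechanism for justifying the limit is genuinely different. The paper uses the \emph{shifted} substitution $u=(1+y)/x$, under which the polynomial bracket in \eqref{int21} becomes \emph{exactly} $x^\nu(u-1)^\nu$; the integrand then factors as a fixed, $x$-independent integrable function $f(u)=(u-1)^\nu u^{-1-\alpha}$ times a factor $g(u)$ that is uniformly sandwiched between constants converging to $cx^{-1-\alpha}$, and the elementary sandwich estimate of Lemma~\ref{lem:integral_approx} finishes the job with no domination argument for the bracket at all. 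You instead keep the plain substitution $y=xu$, so your bracket retains $x$-dependence, and you pay for this by having to exhibit $x$-independent integrable majorants at each singular endpoint before invoking dominated convergence. Your majorants are the right ones and each integrability constraint ($\nu>-1$ at $u=1$, $\nu<\alpha$ at $\infty$, $\alpha<2$ resp.\ $\alpha<1$ at $u=0$ via second- resp.\ first-order Taylor bounds) is correctly matched to the hypotheses, so the proof goes through; your route is arguably more robust for \eqref{int1} and \eqref{intt1}, where $c(xu)$ is merely bounded (not close to $c$) for $u$ near $0$ and the paper's sandwich device would itself require a splitting of the domain, which dominated convergence absorbs automatically. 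One small point of care for the $\fD^+_{\alpha,c}$ refinement: your extra factor $(xu)^{-\delta}$ worsens the singularity at $u\downarrow 0$ in \eqref{int1} and \eqref{intt1} (not at $u\to\infty$), so the shrinking of $\delta$ should be justified there, e.g.\ by splitting the integral at $u=x^{-1/2}$ as the paper does in the proof of Lemma~\ref{lem:G-estimate}; this is routine and consistent with the level of detail the paper itself supplies for this part.
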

\begin{proof}
These estimates are mostly quite straightforward, so we do not give all the details. We spell out the estimate
in~\eqref{int21}; the others are similar. We have
\begin{align*} \int_x^\infty f_\nu ( y -x ) v (y) \ud y & = x^\nu \int_x^\infty \left( \tfrac{1+ y}{x} - 1 \right)^\nu c(y) y^{-1-\alpha} \ud y .
\end{align*}
With the substitution $u = \frac{1+y}{x}$, this last expression becomes
\[ x^{1+\nu} \int_{\frac{1+x}{x}}^\infty ( u -1 )^\nu u^{-1-\alpha} (x - u^{-1} )^{-1-\alpha} c ( u x -1 ) \ud u .\]
Let $\eps \in (0,c)$. Then there exists $y_0 \in \RP$ such that $| c(y) - c | < \eps$
for all $y \geq y_0$, so that $| c ( ux -1 ) - c | < \eps$ for all $u$ in the range of integration,
provided $x \geq y_0$. Writing
\[ f(u) = (u-1)^\nu u^{-1-\alpha}, \text{ and } g(u) = (x - u^{-1} )^{-1-\alpha} c ( ux -1 ) ,\]
for the duration of the proof, we have that
\begin{equation}
\label{eq33}
 \int_x^\infty f_\nu ( y -x ) v (y) \ud y = x^{1+\nu} \int_{\frac{1+x}{x}}^\infty f(u) g(u) \ud u .
\end{equation}
For $u \geq \frac{1+x}{x}$ and $x \geq y_0$, we have
\[ g_- := (c-\eps) x^{-1-\alpha} \leq g(u) \leq (c+\eps) (x-1)^{-1-\alpha} =: g_+ ,\]
so that $g_+ - g_- \leq 2 \eps (x-1)^{-1-\alpha} + C_1 x^{-2-\alpha}$
for a constant $C_1 < \infty$ not depending on $x \geq y_0$ or $\eps$. Moreover, it is easy to see that
$\int_1^\infty | f(u) | \ud u \leq C_2$ for a constant $C_2$ depending only on $\nu$
and $\alpha$, provided $\nu \in (-1,\alpha)$. Hence Lemma~\ref{lem:integral_approx} shows that
\[ \left| \int_{\frac{1+x}{x}}^\infty f(u) g(u) \ud u - (c-\eps) x^{-1-\alpha} \int_{\frac{1+x}{x}}^\infty f(u) \ud u
\right| \leq 2 C_2 \eps (x-1)^{-1-\alpha} + C_1 C_2 x^{-2-\alpha} ,\]
for all $x \geq y_0$. Since also
\[ \left| \int_{\frac{1+x}{x}}^\infty f(u) \ud u - i^{\nu,\alpha}_{2,1} \right| \leq \int_1^{\frac{1+x}{x}} | f(u) | \ud u \to 0 ,\]
as $x \to \infty$, it follows that for any $\eps >0$ we may choose $x$ sufficiently large so that
\[ \left| \int_{\frac{1+x}{x}}^\infty f(u) g(u) \ud u -  c x^{-1-\alpha} i^{\nu,\alpha}_{2,1}
\right| \leq \eps x^{-1-\alpha} ,\]
and since $\eps>0$ was arbitrary, we obtain~\eqref{int21} from~\eqref{eq33}.
\end{proof}

We also need the following simple estimates for ranges of $\alpha$ when the asymptotics for the final two
integrals in Lemma~\ref{lem:all-integrals} are not valid.

\begin{lemma}
\label{lem:big-alpha}
Suppose that $v \in \fD_{\alpha,c}$.
\begin{itemize}
\item[(i)]
For $\alpha \geq 2$ and any $\nu \in (0,1)$, there exist $\eps>0$ and $x_0 \in \RP$ such that,
for all $x \geq x_0$,
\[ \int_{0}^x \left( f_\nu (x+y) + f_\nu (x-y) - 2 f_\nu (x) \right) v ( y ) \ud y 
\leq \begin{cases} - \eps x^{\nu -2} \log x & \text{ if } \alpha =2, \\
- \eps x^{\nu - 2} & \text{ if } \alpha > 2. 
\end{cases} \]
\item[(ii)]
For $\alpha \geq 1$ and any $\nu >0$, there exist $\eps>0$ and $x_0 \in \RP$ such that,
for all $x \geq x_0$,
\[ \int_{0}^x \left( f_\nu (x-y) - f_\nu (x) \right) v (y) \ud y 
\leq \begin{cases} - \eps x^{\nu -1} \log x & \text{ if } \alpha =1, \\
- \eps x^{\nu - 1} & \text{ if } \alpha > 1. 
\end{cases} \]
\end{itemize}
\end{lemma}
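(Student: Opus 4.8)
The plan is to treat both parts by the same elementary argument, exploiting the fact that the relevant integrand has a fixed sign on $[0,x]$. For part~(i), $f_\nu(x+y)+f_\nu(x-y)-2f_\nu(x)\le 0$ for $0\le y\le x$ because $t\mapsto(1+t)^\nu$ is concave on $[0,\infty)$ when $\nu\in(0,1)$ (this is exactly where $\nu<1$ is needed); for part~(ii), $f_\nu(x-y)-f_\nu(x)\le 0$ for $0\le y\le x$ because $t\mapsto(1+t)^\nu$ is increasing for $\nu>0$. Consequently, since also $v\ge 0$, shrinking the domain of integration can only increase the value of the integral, so it suffices to bound the integral over a convenient sub-interval $[y_0,x/2]$, where $y_0$ is a large fixed constant chosen so that $v(y)\ge \tfrac{c}{2}y^{-1-\alpha}$ for all $y\ge y_0$ --- this is possible because $c(y)\to c>0$, and the discarded piece over $[0,y_0]$ is finite (there $v$ is merely an integrable density, not of order $y^{-1-\alpha}$) and non-positive, so dropping it only helps.

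On $[y_0,x/2]$ I would then replace $v(y)$ by $\tfrac{c}{2}y^{-1-\alpha}$ (legitimate as the integrand is $\le 0$) and estimate the bracketed function by Taylor expansion with integral remainder about $x$. In part~(i), writing $\phi(t)=(1+x+t)^\nu$, one has $f_\nu(x+y)+f_\nu(x-y)-2f_\nu(x)=\int_0^y (y-s)\bigl(\phi''(s)+\phi''(-s)\bigr)\,\ud s$, and since $\phi''(t)=\nu(\nu-1)(1+x+t)^{\nu-2}$ with $\nu(\nu-1)<0$ and $1+x\pm s\asymp x$ uniformly for $|s|\le y\le x/2$, this yields $f_\nu(x+y)+f_\nu(x-y)-2f_\nu(x)\le -c'x^{\nu-2}y^2$ for a constant $c'>0$ and all large $x$. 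In part~(ii), the one-derivative analogue $f_\nu(x-y)-f_\nu(x)=-\nu\int_0^y (1+x-s)^{\nu-1}\,\ud s$ together with $(1+x-s)^{\nu-1}\asymp x^{\nu-1}$ for $0\le s\le x/2$ gives $f_\nu(x-y)-f_\nu(x)\le -c'x^{\nu-1}y$. Substituting these bounds reduces the task to evaluating
\[ x^{\nu-2}\int_{y_0}^{x/2} y^{2}\cdot y^{-1-\alpha}\,\ud y \;=\; x^{\nu-2}\int_{y_0}^{x/2} y^{1-\alpha}\,\ud y \qquad\text{and}\qquad x^{\nu-1}\int_{y_0}^{x/2} y^{-\alpha}\,\ud y , \]
respectively. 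When $\alpha=2$ (resp.\ $\alpha=1$) the power integral equals $\log(x/2)-\log y_0\sim\log x$, giving the factor $x^{\nu-2}\log x$ (resp.\ $x^{\nu-1}\log x$); when $\alpha>2$ (resp.\ $\alpha>1$) the exponent is $<-1$, so the integral increases to a strictly positive finite limit, giving the factor $x^{\nu-2}$ (resp.\ $x^{\nu-1}$). Absorbing constants yields the stated bounds for a suitable $\eps>0$ and all $x\ge x_0$.

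I expect no substantial obstacle: this is the crude one-sided counterpart to Lemma~\ref{lem:all-integrals}, needed precisely in the range $\alpha\ge 2$ (resp.\ $\alpha\ge 1$) where the sharp asymptotics of~\eqref{int1} (resp.~\eqref{intt1}) break down, so only a one-sided estimate is required. The few points demanding care are: (a) noting the concavity (resp.\ monotonicity) that fixes the sign of the integrand, which is what licenses both the domain restriction and the substitution of $\tfrac{c}{2}y^{-1-\alpha}$ for $v$; (b) checking that the comparison of $1+x\pm s$ with $x$ in the Taylor remainder is uniform over $|s|\le y\le x/2$, which is why we truncate at $x/2$ rather than at $x$; and (c) keeping the borderline cases $\alpha=2$ and $\alpha=1$, where $\int y^{1-\alpha}$ and $\int y^{-\alpha}$ acquire a logarithmic factor, distinct from the strict inequalities, where these integrals converge to positive constants.
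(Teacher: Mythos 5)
Your proposal is correct and follows essentially the same route as the paper: fix the sign of the integrand via concavity (resp.\ monotonicity) of $t\mapsto(1+t)^\nu$, restrict to a sub-interval $[y_0,\mathrm{const}\cdot x]$ where $v(y)\geq \tfrac{c}{2}y^{-1-\alpha}$, obtain a quadratic (resp.\ linear) Taylor lower bound on the magnitude of the bracketed term, and integrate $y^{1-\alpha}$ (resp.\ $y^{-\alpha}$) to produce the logarithm at the boundary exponent. The only differences from the paper's argument are cosmetic (truncation at $x/2$ rather than $\delta_\nu x$, and the integral form of the Taylor remainder rather than a pointwise Taylor estimate on a $\delta_\nu$-neighbourhood).
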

\begin{proof} 

For part (i), set $a_\nu (z) = (1 + z)^\nu + (1-z)^\nu - 2$, so that
\begin{align*}
 \int_{0}^x \left( f_\nu (x+y) + f_\nu (x-y) - 2 f_\nu (x) \right) v ( y ) \ud y 
& = (1+x)^\nu \int_0^x a_\nu \left( \tfrac{y}{1+x} \right) c( y) y^{-1-\alpha} .\end{align*}
Suppose that $\alpha \geq 2$ and $\nu \in (0,1)$.
For $\nu \in (0,1)$, calculus shows that $a_\nu(z)$ has a single local maximum at $z=0$,
so that $a_\nu(z) \leq 0$ for all $z$. Moreover, Taylor's theorem shows that 
for any $\nu \in (0,1)$ there exists $\delta_\nu \in (0,1)$ such that
$a_\nu (z) \leq - (\nu /2) (1-\nu) z^2$ for all $z \in [0, \delta_\nu]$. 
Also, $c(y) \geq c/2 > 0$ for all $y \geq y_0$ sufficiently large.
Hence, for all $x \geq y_0 / \delta_\nu$,
\begin{align*}
 \int_{0}^x  a_\nu  \left(   \tfrac{y}{1+x} \right) c (y) y^{-1-\alpha} \ud y 
& \leq \int_{y_0}^{\delta_\nu x} a_\nu \left(   \tfrac{y}{1+x} \right) c (y) y^{-1-\alpha} \ud y \\
& \leq - \frac{c \nu (1-\nu)}{4(1+x)^2} \int_{y_0}^{\delta_\nu x} y^{1-\alpha} \ud y , \end{align*}
which yields part  (i) of the lemma.

For part (ii), suppose that $\alpha \geq 1$ and $\nu >0$. For any $\nu >0$, there exists $\delta_\nu \in (0,1)$
such that $(1-z)^\nu -1 \leq -(\nu/2) z$ for all $z \in [0,\delta_\nu]$. Moreover,
$c(y) \geq c/2 >0$ for all $y \geq y_0$ sufficiently large.
Hence, since the integrand is non-positive, for $x > y_0 / \delta_\nu$,
\begin{align*}
(1+x)^\nu \int_{0}^x \left(   \left(1 -\tfrac{y}{1+x}\right)^\nu - 1 \right) c  (y) y^{-1-\alpha} \ud y & 
\leq \frac{c}{2} (1+x)^\nu  \int_{y_0}^{\delta_\nu x} \left(   \left(1 -\tfrac{y}{1+x} \right)^\nu - 1 \right)   y^{-1-\alpha} \ud y\\
& \leq - \frac{c \nu (1+x)^\nu }{4( 1+x)} \int_{y_0}^{\delta_\nu x}    y^{-\alpha} \ud y , \end{align*}
and part (ii) follows.
\end{proof}

\begin{lemma}
\label{lem:Df}
Suppose that \eqref{ass:tails} holds and  $\chi_i\alpha_i<1$. Then for 
$\nu \in (-1,1 \wedge \alpha_i)$,  $\flat\in \{{\rm one}, {\rm sym} \}$, and $ i \in \cS^\flat$,
as $x\to\infty$,
\[
Df_\nu(x,i)= 
\chi_i\lambda_i c_i x^{\nu-\alpha_i} i_{2,1}^{\nu,\alpha_i} 
 \left(\frac{(P\bla)_i}{\la_i}+R^\flat(\alpha_i,\nu)\right) + o (x^{\nu - \alpha_i} ),
\]
where $\bla=(\lambda_k; k\in\cS)$,  
\begin{align*}
 \Rs(\alpha,\nu)  = \frac{i_0^{\nu,\alpha}+i_1^{\nu,\alpha} -i_{2,0}^{\alpha}}{i_{2,1}^{\nu,\alpha}}, 
\text{ and }  
 \Ro(\alpha,\nu)  =  \frac{\tilde{i}_1^{\nu,\alpha} -i_{2,0}^{\alpha}}{i_{2,1}^{\nu,\alpha}}.
 \end{align*}
 Moreover, if $v_i \in \fD_{\alpha_i,c_i}^+$ then, for some $\delta >0$,  
\[
Df_\nu(x,i)= \chi_i\lambda_i c_i x^{\nu-\alpha_i} i_{2,1}^{\nu,\alpha_i} 
 \left(\frac{(P\bla)_i}{\la_i}+R^\flat (\alpha_i,\nu)\right) + O (x^{\nu - \alpha_i-\delta} ).\]
 Finally,   as $\nu \to 0$, 
 \begin{equation}
\label{eq34} 
R^\flat(\alpha,\nu)= 
\begin{cases}
\Rs (\alpha, \nu)  = -1 + \nu \pi \cot (\pi \alpha_i / 2) + o(\nu)  & \text{ if }\ \ \flat={\rm sym}\\
\Ro (\alpha, \nu)  = -1 + \nu \pi \cot (\pi \alpha_i ) + o(\nu) & \text{ if } \ \ \flat={\rm one}.
\end{cases}
\end{equation}
\end{lemma}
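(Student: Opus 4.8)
The plan is to proceed in two steps. First, obtain the leading-order asymptotics of $Df_\nu(x,i)$ by feeding the integral estimates of Lemma~\ref{lem:all-integrals} into the exact identities~\eqref{conditional-increment-sym} (for $i\in\Ss$) and~\eqref{conditional-increment-one} (for $i\in\So$). Second, read off the $\nu\to0$ behaviour of $\Rs$ and $\Ro$ from the closed forms in Lemmas~\ref{lem:i-integrals} and~\ref{lem:j-integrals} together with the digamma reflection formula.

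For the first step, take $i\in\Ss$, so that $\chi_i=\tfrac12$ and the hypothesis $\chi_i\alpha_i<1$ forces $\alpha_i\in(0,2)$; combined with $\nu\in(-1,1\wedge\alpha_i)$ this places us within the hypotheses of every estimate~\eqref{int21}, \eqref{int20}, \eqref{int0} and~\eqref{int1}. The sum over $j$ in~\eqref{conditional-increment-sym} touches only the first integral and produces the factor $(P\bla)_i=\sum_{j}p(i,j)\lambda_j$. Substituting those four estimates and collecting the common power $x^{\nu-\alpha_i}$ gives
\[ Df_\nu(x,i)= c_i x^{\nu-\alpha_i}\Bigl( \tfrac{(P\bla)_i}{2}\, i_{2,1}^{\nu,\alpha_i} + \tfrac{\lambda_i}{2}\bigl( i_0^{\nu,\alpha_i} + i_1^{\nu,\alpha_i} - i_{2,0}^{\alpha_i} \bigr) \Bigr) + o(x^{\nu-\alpha_i}), \]
and pulling out $\chi_i\lambda_i i_{2,1}^{\nu,\alpha_i}=\tfrac{\lambda_i}{2}i_{2,1}^{\nu,\alpha_i}$ yields precisely the stated formula with $R^\flat=\Rs$. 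The case $i\in\So$ ($\chi_i=1$) is the identical computation applied to~\eqref{conditional-increment-one}, which carries only three integrals; here $\chi_i\alpha_i<1$ means $\alpha_i\in(0,1)$, which is exactly what is needed for~\eqref{intt1}, and one obtains $R^\flat=\Ro$. When $v_i\in\fD_{\alpha_i,c_i}^+$, one instead invokes the final sentence of Lemma~\ref{lem:all-integrals}, replacing each $o(x^{\nu-\alpha_i})$ by $O(x^{\nu-\alpha_i-\delta})$ and taking the smallest of the finitely many $\delta$'s that occur.

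For the second step, note that by Lemma~\ref{lem:i-integrals} each of $i_0^{\nu,\alpha}$, $i_1^{\nu,\alpha}$, $\tilde i_1^{\nu,\alpha}$ and $i_{2,1}^{\nu,\alpha}$ is given by an explicit expression analytic in $\nu$ near $\nu=0$, so $\Rs$ and $\Ro$ are differentiable at $\nu=0$ (the denominator $i_{2,1}^{0,\alpha}=1/\alpha$ being nonzero). At $\nu=0$ one has $i_0^{0,\alpha}=i_1^{0,\alpha}=\tilde i_1^{0,\alpha}=0$ and $i_{2,0}^{\alpha}=i_{2,1}^{0,\alpha}=1/\alpha$, so $R^\flat(\alpha,0)=-1$ in both cases. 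For the $\nu$-derivatives at $0$, the remark following Lemma~\ref{lem:j-integrals} identifies $\partial_\nu i_0^{\nu,\alpha}$, $\partial_\nu i_1^{\nu,\alpha}$, $\partial_\nu\tilde i_1^{\nu,\alpha}$ and $\partial_\nu i_{2,1}^{\nu,\alpha}$ at $\nu=0$ with $j_0^\alpha$, $j_1^\alpha$, $\tilde j_1^\alpha$ and $j_2^\alpha$ respectively; the quotient rule then gives $\partial_\nu\Rs|_{\nu=0}=\alpha(j_0^\alpha+j_1^\alpha+j_2^\alpha)$ and $\partial_\nu\Ro|_{\nu=0}=\alpha(\tilde j_1^\alpha+j_2^\alpha)$. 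Inserting the closed forms from Lemma~\ref{lem:j-integrals}, the contributions in $\gamma$ and in $\psi(\alpha)$ cancel, leaving $\psi(1-\tfrac{\alpha}{2})-\psi(\tfrac{\alpha}{2})$ and $\psi(1-\alpha)-\psi(\alpha)$ respectively; the reflection identity $\psi(1-z)-\psi(z)=\pi\cot(\pi z)$ with $z=\alpha/2$ and $z=\alpha$ converts these into $\pi\cot(\pi\alpha/2)$ and $\pi\cot(\pi\alpha)$. The first-order Taylor expansion $R^\flat(\alpha,\nu)=-1+\nu\,\partial_\nu R^\flat|_{\nu=0}+o(\nu)$ is then exactly~\eqref{eq34}.

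Apart from these routine manipulations, the one point demanding care is the bookkeeping in the first step: verifying that the constraints $\chi_i\alpha_i<1$ and $\nu\in(-1,1\wedge\alpha_i)$ are precisely what each individual estimate of Lemma~\ref{lem:all-integrals} requires (in particular $\alpha_i<2$ for~\eqref{int1} in the symmetric case and $\alpha_i<1$ for~\eqref{intt1} in the one-sided case), and then matching the assembled coefficients term-by-term against the definitions of $\Rs$ and $\Ro$. The second step is a short first-order expansion once the values of the $j$-integrals and the digamma reflection formula are available, so I do not expect any genuine obstacle there.
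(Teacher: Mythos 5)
Your proposal is correct and follows essentially the same route as the paper: assemble the estimates of Lemma~\ref{lem:all-integrals} into the exact decompositions~\eqref{conditional-increment-sym} and~\eqref{conditional-increment-one}, then compute $R^\flat(\alpha,0)=-1$ and the $\nu$-derivative at $0$ via the $j$-integrals and the digamma reflection formula. The only cosmetic difference is that you justify differentiating $R^\flat$ at $\nu=0$ via analyticity of the closed forms from Lemma~\ref{lem:i-integrals}, whereas the paper differentiates under the integral sign using uniform convergence; the resulting computation is identical.
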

\begin{proof}
The above expressions for $Df_\nu(x,i)$ follow from~\eqref{conditional-increment-sym} and~\eqref{conditional-increment-one}
with Lemma~\ref{lem:all-integrals}. 

Additionally, we compute that 
$\Rs(\alpha,0)=\Ro(\alpha,0)=-1$. For $\nu$ in a neighbourhood of $0$ uniformity of convergence of the integrals over $(1,\infty)$
enables us to differentiate with respect to $\nu$ under the integral sign to get
\begin{align*}
\frac{\partial}{\partial \nu} \Rs (\alpha, \nu)\vert_{\nu=0} &=
\frac{j_0^\alpha +j_1^\alpha}{i_{2,1}^{0,\alpha}} -\Rs(\alpha,0) \frac{j_2^\alpha}{i_{2,1}^{0,\alpha}}\\
& =
\psi \left(1- \tfrac{\alpha}{2} \right)- \psi \left( \tfrac{\alpha}{2} \right) \\
& = \pi\cot \left( \tfrac{\pi \alpha}{2} \right),
\end{align*}
using Lemma~\ref{lem:j-integrals} and the digamma reflection formula (equation 6.3.7 from \cite[p.\ 259]{as}),
and then the first formula in~\eqref{eq34} follows by Taylor's theorem.
Similarly,
for the second formula in~\eqref{eq34}, 
\begin{align*}
\frac{\partial}{\partial \nu} \Ro (\alpha, \nu)\vert_{\nu=0} &=
\frac{\tilde j_1^\alpha}{i_{2,1}^{0,\alpha}} -\Ro(\alpha,0) \frac{j_2^\alpha}{i_{2,1}^{0,\alpha}}\\
& =
\psi(1-\alpha)- \psi(\alpha) \\
& = \pi\cot (\pi \alpha). \qedhere
\end{align*}
\end{proof}

We conclude this subsection with two algebraic results.

\begin{lemma}
\label{lem:finding-theta}
Suppose that \eqref{ass:basic} holds. Given $(b_k ; k \in \cS)$ with $b_k \in \R$ for all $k$, there exists a solution
  $( \theta_k ; k \in \cS)$ with $\theta_k \in \R$ for all $k$ to the system of equations
\begin{equation}
\label{lambda_system}
 \sum_{j \in \cS} p (k,j)  \theta_j - \theta_k = b_k, ~~ ( k \in \cS) , \end{equation}
if and only if $\sum_{k \in \cS} \mu_k b_k = 0$.
Moreover, if a solution to \eqref{lambda_system}  exists, we may take   $\theta_k >0$ for all $k \in \cS$.
\end{lemma}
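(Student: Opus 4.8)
The plan is to read~\eqref{lambda_system} as the single matrix equation $(P-I)\bt=\bb$ on $\R^{\cS}$, with $\bt=(\theta_k)_{k\in\cS}$ and $\bb=(b_k)_{k\in\cS}$, and to combine the Fredholm alternative with the Perron--Frobenius theorem.

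Necessity is elementary: if $\bt$ solves~\eqref{lambda_system}, multiply the $k$-th equation by $\mu_k$ and sum over $k\in\cS$. The left-hand side equals $\sum_{j\in\cS}\theta_j\bigl(\sum_{k\in\cS}\mu_k p(k,j)\bigr)-\sum_{k\in\cS}\mu_k\theta_k$, which vanishes by the invariance relation~\eqref{mu_system}; hence $\sum_{k\in\cS}\mu_k b_k=0$.

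For sufficiency, observe that $P$ is nonnegative and irreducible with $P\mathbf 1=\mathbf 1$, so its Perron eigenvalue equals $1$; by Perron--Frobenius this eigenvalue is simple, whence $\ker(P^{\tra}-I)$ is one-dimensional. Since $\mu P=\mu$ gives $(P^{\tra}-I)\bmu^{\tra}=\mathbf 0$ with $\bmu$ componentwise positive, it follows that $\ker(P^{\tra}-I)=\R\,\bmu^{\tra}$. The Fredholm alternative (for a real matrix $A$ one has $\operatorname{Im}A=(\ker A^{\tra})^{\perp}$) then says that $(P-I)\bt=\bb$ is solvable precisely when $\bb$ is orthogonal to $\ker(P^{\tra}-I)=\R\,\bmu^{\tra}$, i.e.\ precisely when $\sum_{k\in\cS}\mu_k b_k=0$; together with the previous paragraph this proves the stated equivalence. (One may also bypass Perron--Frobenius: when $\sum_{k\in\cS}\mu_k b_k=0$, an explicit solution can be built by a regeneration argument for the Markov chain with transition matrix $P$, solving the Poisson equation $(I-P)\bt=-\bb$ via excursions from a fixed reference state.)

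Finally, for the positivity assertion: if $\bt$ is any solution of~\eqref{lambda_system}, then so is $\bt+c\mathbf 1$ for every $c\in\R$, because $(P-I)\mathbf 1=\mathbf 0$ (the rows of $P$ sum to $1$); choosing $c>-\min_{k\in\cS}\theta_k$ produces a solution with every component strictly positive. The only step carrying genuine content is the identification $\ker(P^{\tra}-I)=\R\,\bmu^{\tra}$, equivalently the simplicity of the eigenvalue $1$ of $P$ — this is precisely where irreducibility of $P$ is used — while everything else is routine linear algebra.
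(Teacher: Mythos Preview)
Your proof is correct and follows essentially the same route as the paper: both rewrite the system as $(P-I)\bt=\bb$, invoke the Fredholm alternative together with the fact that $\ker(P^\tra-I)=\R\bmu$ (the paper simply cites the uniqueness of $\bmu$ from~\eqref{ass:basic}, while you justify it via Perron--Frobenius), and then shift by a constant vector $\mathbf 1$ for positivity. Your explicit necessity computation and the parenthetical remark on the Poisson equation are minor embellishments, but the core argument is the same.
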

\begin{proof}
As column vectors, we write $\bmu = (\mu_k ; k \in \cS)$ for the  stationary probabilities as given in \eqref{ass:basic},
 $\bb = (b_k ; k \in \cS)$, and $\bt = (\theta_k ; k \in \cS)$. Then
in matrix-vector form, \eqref{lambda_system} reads $( P - I ) \bt = \bb $,
while $\bmu$ satisfies \eqref{mu_system}, which reads $( P - I)^\tra \bmu = \0$, the homogeneous system adjoint to \eqref{lambda_system}. (Here $I$ is the 
identity matrix and $\0$ is the vector  of all   0s.)

A standard result from linear algebra (a version of the Fredholm alternative)
says that $( P - I ) \bt = \bb $ 
  admits a solution $\bt$ 
if and only if the vector $\bb$ is orthogonal to any solution $\bx$  
to $(P - I)^\tra \bx = \0$; but, by \eqref{ass:basic},
any such $\bx$ is a scalar multiple of $\bmu$. 
In other words, a solution  $\bt$ to \eqref{lambda_system}  exists
if and only if $\bmu^\tra \bb = 0$, as claimed. 

Finally,  since $P$ is a stochastic matrix, $(P - I ) \mathbf{1} = \0$, where $\mathbf{1}$ is the column vector of all 1s;
hence if $\bt$ solves $( P - I ) \bt = \bb $, then so does $\bt + \gamma \mathbf{1}$ for any $\gamma \in \R$.
This implies the final statement in the lemma.
\end{proof}

\begin{lemma}
\label{lem:algebraic2}
Let $U=(U_{k,\ell} ; k,\ell=0,\ldots, M )$ be a given upper triangular matrix having all its upper triangular  elements non-negative 
($U_{k,\ell}\geq 0$ for $0\leq k<\ell\leq M$ and vanishing all other elements) and $A =(A_k ; k=1, \ldots, M )$ a vector with positive components. 
Then there exists a unique lower triangular matrix  $L=(L_{k,\ell} ; k, \ell = 0,\ldots, M)$ 
(so diagonal and upper triangular elements vanish)
satisfying
\begin{itemize}
\item[(i)] $L_{m,m-1} =(UL)_{m,m} +A_m$ for $m=1, \ldots, M$;
\item[(ii)] $L_{k, \ell}=\sum_{r=0}^{k-\ell-1} L_{\ell+r+1, \ell+r}= L_{\ell+1,\ell}+ \cdots + L_{k, k-1}$ for $0\leq \ell < k \leq M$.
\end{itemize}
Also, all lower triangular elements of $L$ are positive, i.e.\ $L_{k,\ell}>0$ for $0\leq \ell<k\leq M$.
\end{lemma}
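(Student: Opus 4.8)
The plan is to construct $L$ explicitly by induction on the subdiagonal index $m$, using (i) to define the $m$-th subdiagonal entry $L_{m,m-1}$ and (ii) to define all the lower entries $L_{k,\ell}$ with $k-\ell \geq 2$ as partial sums of the subdiagonal entries already constructed. The key observation making this work is a matching of \emph{dependencies}: the quantity $(UL)_{m,m} = \sum_{\ell} U_{m,\ell} L_{\ell,m} = \sum_{\ell>m} U_{m,\ell} L_{\ell,m}$ in condition (i) involves only entries $L_{\ell,m}$ with $\ell > m$, i.e.\ entries lying in columns indexed $\geq m$ and rows strictly below the diagonal; by condition (ii) each such $L_{\ell,m}$ is a sum $L_{m+1,m} + \cdots + L_{\ell,\ell-1}$ of subdiagonal entries whose subdiagonal indices run from $m+1$ up to $\ell \leq M$. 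So $(UL)_{m,m}$ depends only on $L_{m+1,m}, \ldots, L_{M,M-1}$, whereas (i) expresses $L_{m,m-1}$ in terms of $(UL)_{m,m}$ and $A_m$ — hence in terms of subdiagonal entries with \emph{strictly larger} subdiagonal index. This suggests running the induction \emph{downwards} in $m$, from $m=M$ to $m=1$.

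Concretely, first I would set $L_{M,M-1} := (UL)_{M,M} + A_M$; note $(UL)_{M,M} = \sum_{\ell > M} U_{M,\ell}L_{\ell,M} = 0$ since there are no such $\ell$, so $L_{M,M-1} = A_M > 0$. Then, assuming $L_{m+1,m}, \ldots, L_{M,M-1}$ have all been defined (and shown positive), I would first define all entries $L_{k,\ell}$ for which $\ell \geq m$ and $k > \ell$ via the formula in (ii) — these are well-defined sums of the already-constructed subdiagonal entries, and they are positive since each summand is. Having these in hand, the sum $(UL)_{m,m} = \sum_{\ell > m} U_{m,\ell} L_{\ell,m}$ is a well-defined non-negative quantity (non-negativity because $U_{m,\ell} \geq 0$ and $L_{\ell,m} > 0$), and I would then \emph{define} $L_{m,m-1} := (UL)_{m,m} + A_m$, which is $> 0$ since $A_m > 0$ and $(UL)_{m,m} \geq 0$. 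This completes the inductive step. After the induction terminates at $m=1$, all subdiagonal entries $L_{1,0},\ldots,L_{M,M-1}$ are defined and positive, the entries $L_{k,\ell}$ with $k-\ell\geq 2$ are defined by (ii) and are positive, and the remaining (diagonal and upper-triangular) entries are set to zero, giving a lower triangular $L$ of the required form.

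Existence is then immediate from the construction, and positivity of all strictly-lower entries has been built in along the way. For uniqueness, I would observe that (ii) \emph{forces} every entry with $k-\ell\geq 2$ to equal the stated sum of subdiagonal entries, so any solution is determined by its subdiagonal $(L_{m,m-1})_{m=1}^M$; and then (i), read for $m=M$ down to $m=1$ in that order, forces each $L_{m,m-1}$ successively (since, as noted, $(UL)_{m,m}$ depends only on $L_{\ell,\ell-1}$ with $\ell>m$, which are already pinned down). Hence the solution is unique.

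The only real subtlety — and the one step I would be careful to state cleanly rather than the main ``obstacle,'' since the argument is essentially bookkeeping — is verifying the dependency claim: that $(UL)_{m,m}$, after substituting (ii), involves only subdiagonal entries $L_{\ell,\ell-1}$ with $\ell > m$ and no entry $L_{m,m-1}$ or lower-indexed one. This is where the upper-triangularity of $U$ (so only $U_{m,\ell}$ with $\ell>m$ appear) combines with the shape of (ii) (so $L_{\ell,m}$ expands into subdiagonal entries with indices $m+1,\ldots,\ell$) to make the downward recursion well-posed. Once that is spelled out, everything else is routine induction.
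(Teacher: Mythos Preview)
Your proposal is correct and follows essentially the same approach as the paper: both construct $L$ by downward induction on the subdiagonal index, starting from $L_{M,M-1}=A_M$, using the observation that $(UL)_{m,m}$ depends only on subdiagonal entries $L_{\ell,\ell-1}$ with $\ell>m$ (via the upper-triangularity of $U$ and the telescoping form of (ii)), and obtaining positivity from $A_m>0$ together with non-negativity of the already-constructed terms. The paper organizes the induction via the ``lower-right corner'' arrays $\Lambda_m$, whereas you track the subdiagonal entries directly and fill in the rest by (ii); these are cosmetic variants of the same argument, and your explicit uniqueness paragraph is a welcome addition that the paper leaves implicit.
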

\begin{proof}
We construct $L$ inductively.
 Item (i) demands
\begin{equation}
\label{eq:L-step}
L_{m,m-1}
=\sum_{\ell=0}^M U_{m,\ell}L_{\ell,m} +A_m=
\sum_{\ell=m+1}^M U_{m,\ell}L_{\ell,m} +A_m.
\end{equation} 
In the case $m=M$, with the usual convention that an empty sum is $0$, the demand~\eqref{eq:L-step} is simply 
$L_{M,M-1}= A_M$.
So we can start our construction taking $L_{M, M-1} = A_M$, which is positive by assumption. 
(Item (ii) makes no demands in the case $k = M$, $\ell = M-1$.)

Suppose now that all matrix elements $L_{k,\ell}$ have been computed in the lower-right corner $\Lambda_m$ ($1 \leq m \leq M$):
\[\Lambda_m \hskip5mm=\hskip5mm \begin{matrix} L_{m,m-1} & & & &\\
		        L_{m+1, m-1} & L_{m+1, m} & &\\
		        \vdots & \vdots & \ddots & \\
		        L_{M,m-1} & L_{M, m} & \ldots & L_{M,M-1}
		        \end{matrix}\]
The elements of $L$ involved in statement (i) (for given $m$) and in statement (ii) for $\ell = m-1$
are all in $\Lambda_m$; thus as part of our inductive hypothesis we may suppose
that the elements of $\Lambda_m$ are such that (i) holds for the given $m$, and (ii) holds
with $\ell = m-1$ and all $m \leq k \leq M$. We have shown that we can achieve this for $\Lambda_M$.

The inductive step is to construct from $\Lambda_m$ ($2 \leq m \leq M$) elements $L_{k, m-2}$ for $m-1 \leq k \leq M$
and hence complete the array $\Lambda_{m-1}$ in such a way that (i) holds for $m-1$ replacing $m$,
 that (ii) holds for $\ell = m-2$, and that all elements are positive.
Now~\eqref{eq:L-step} reveals  the demand of item (i) as
\[  L_{m-1,m-2} = \sum_{\ell=m}^M  U_{m-1,\ell } L_{\ell, m-1} +A_{m-1} ,\]
which we can achieve since the elements of $L$ on the right-hand side are all in $\Lambda_m$,
and since $A_{m-1} > 0$ we get $L_{m-1, m-2} > 0$.

The $\ell = m-2$ case of (ii) demands that for $m \leq k \leq M$ we have
\begin{equation*}
L_{k,m-2} =\sum_{r=0}^{k-m+1} L_{m-1+r, m-2+r}=
L_{m-1, m-2} +\cdots +L_{k,k-1}
\end{equation*}
which involves only elements of $\Lambda_m$ in addition to $L_{m-1, m-2}$, which we have already defined,
and positivity of all the $L_{k, m-2}$ follows by hypothesis.
This gives us the construction of $\Lambda_{m-1}$ and establishes the inductive step.

This algorithm can be continued down to $\Lambda_1$. But then the lower triangular matrix $L$ is totally determined. 
The diagonal and upper triangular elements of $L$ do not influence the construction, and may be set to zero.
\end{proof}

\begin{corollary}
\label{cor:algebraic2-open}
Let the matrix $U$ and the vector $A$ be as in Lemma~\ref{lem:algebraic2}. Let 
$\cL$ be the set of lower triangular matrices $\tilde{L}$ satisfying
\begin{itemize}
\item[(i)] $\tilde{L}_{m,m-1} >(U\tilde{L})_{m,m} +A_m$ for $m=1, \ldots, M$;
\item[(ii)] $\tilde{L}_{k,\ell}=\sum_{r=0}^{k-\ell-1}\tilde{L}_{\ell+r+1, \ell+r}= \tilde{L}_{\ell+1,\ell}+ \cdots + \tilde{L}_{k, k-1}$ for $0\leq \ell < k \leq M$,
\end{itemize}
viewed as subset of the positive cone $\mathcal{V}= (0,\infty)^{\frac{M(M-1)}{2}}$.
Then $\cL$ is a non-empty, open subset of $\mathcal{V}$.
\end{corollary}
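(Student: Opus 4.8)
The plan is to express everything through the $M$ subdiagonal entries of $\tilde L$, which by~(ii) determine all of $\tilde L$ (through $\tilde L_{k,\ell}=\tilde L_{\ell+1,\ell}+\cdots+\tilde L_{k,k-1}$), and then to read off both non-emptiness and openness from the triangular structure of the defining conditions. The key preliminary observation is that, since $U$ is strictly upper triangular with non-negative entries,
\[
(U\tilde L)_{m,m}=\sum_{\ell=m+1}^{M}U_{m,\ell}\,\tilde L_{\ell,m}=\sum_{\ell=m+1}^{M}U_{m,\ell}\,\bigl(\tilde L_{m+1,m}+\cdots+\tilde L_{\ell,\ell-1}\bigr),
\]
which is a linear form, with non-negative coefficients, in the subdiagonal entries $\tilde L_{m+1,m},\ldots,\tilde L_{M,M-1}$ only; it involves none of $\tilde L_{1,0},\ldots,\tilde L_{m,m-1}$, and is an empty sum (hence $0$) when $m=M$. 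So, once the linear relations~(ii) are imposed, condition~(i) becomes the triangular system of strict inequalities $\tilde L_{m,m-1}>(U\tilde L)_{m,m}+A_m$, $m=1,\ldots,M$, in which the right-hand side at index $m$ involves only subdiagonal entries of strictly larger index.

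For non-emptiness I would scale the matrix $L$ furnished by Lemma~\ref{lem:algebraic2}. That $L$ satisfies~(ii) and the equalities $L_{m,m-1}=(UL)_{m,m}+A_m$; replacing $L$ by $(1+\eps)L$ for any $\eps>0$ preserves~(ii) (being linear and homogeneous) as well as positivity of all strictly-lower-triangular entries, while
\[
(1+\eps)L_{m,m-1}=(1+\eps)(UL)_{m,m}+A_m+\eps A_m>(1+\eps)(UL)_{m,m}+A_m=\bigl(U((1+\eps)L)\bigr)_{m,m}+A_m,
\]
using $\eps A_m>0$; hence $(1+\eps)L\in\cL$. (Equivalently, one can produce a point of $\cL$ directly by back-substitution from $m=M$ down to $m=1$, choosing at each step the subdiagonal entry to exceed the already-determined non-negative quantity on the right-hand side of~(i).)

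For openness I would use the coordinates $s=(s_1,\ldots,s_M)=(\tilde L_{1,0},\ldots,\tilde L_{M,M-1})$, in which~(ii) simply re-expresses every other strictly-lower-triangular entry of $\tilde L$ as a fixed linear combination of $s$. Each of the $M$ maps $s\mapsto s_m-(U\tilde L)_{m,m}-A_m$ is then affine, hence continuous, so the set of $s\in(0,\infty)^M$ at which all $M$ strict inequalities in~(i) hold is open; transporting this identification back shows $\cL$ is open in $\mathcal{V}$. I do not anticipate a genuine obstacle: given Lemma~\ref{lem:algebraic2} the statement is essentially immediate, and the only point needing a little care (where a careless treatment could fail) is the index bookkeeping that confirms $(U\tilde L)_{m,m}$ contains no subdiagonal entry of index $\le m$, which is exactly what makes~(i) triangular, and hence both solvable and manifestly an open condition.
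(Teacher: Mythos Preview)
The paper states this corollary without proof, treating it as immediate from Lemma~\ref{lem:algebraic2}; your argument is correct and fills in the details cleanly. The scaling trick $L\mapsto(1+\eps)L$ for non-emptiness is particularly neat (your parenthetical back-substitution alternative simply mirrors the inductive construction in the lemma's proof), and the reduction to the $M$ subdiagonal coordinates for openness is exactly the right move.

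One minor point worth flagging: taken literally, $\cL$ lies in the affine subspace of $\mathcal{V}$ cut out by the equalities~(ii), so ``open in $\mathcal{V}$'' must be read as open in that subspace (equivalently, open in the subdiagonal coordinates $s\in(0,\infty)^M$ you introduce). The dimension $\tfrac{M(M-1)}{2}$ quoted for $\mathcal{V}$ in the paper looks like a typo in any case --- the strictly lower-triangular part of an $(M+1)\times(M+1)$ matrix has $\tfrac{M(M+1)}{2}$ entries. Your phrase ``transporting this identification back'' is fine once this is understood, but you might say explicitly that the bijection $s\mapsto\tilde L$ is a homeomorphism onto the set of positive lower-triangular matrices satisfying~(ii), so openness transfers.
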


\subsection{Supermartingale conditions and recurrence}

 We use the notation
\[ \ubar \alpha := \min_{k \in \cS} \alpha_k ; ~ \bar \alpha := \max_{k \in \cS} \alpha_k ;  ~
\alpha_\star := \min_{k \in \cS} \left\{ \alpha_k \wedge (1/ \chi_k)  \right\}
~\text{and}~
\alpha^\star := \max_{k \in \cS} \left\{ \alpha_k \wedge (1/ \chi_k)  \right\}  .\]

We start with the case  $\max_{k \in \cS} \chi_k \alpha_k < 1$. We will obtain a local supermartingale by choosing the $\la_k$
carefully.  Lemma~\ref{lem:finding-theta}, which shows how the stationary probabilities $\mu_k$ enter, is crucial;
a similar idea was used for random walks on strips in Section~3.1 of~\cite{fmm}. 
Next   is our key local  supermartingale result in this case.

\begin{proposition}
\label{pro:main-supermartingale}
Suppose that \eqref{ass:basic} and \eqref{ass:tails} hold, and that $\max_{k \in \cS} \chi_k \alpha_k < 1$. 
\begin{itemize}
\item[(i)]
If $\sum_{k \in \cS} \mu_k \cot ( \chi_k \pi \alpha_k ) < 0$, then  there exist $\nu \in (0, \ubar \alpha)$, $\la_k >0$ ($k \in \cS$), $\eps >0$, and $x_0 \in \RP$ such that
\[ \hskip -5mm Df_\nu(x,i)= \Exp_{x,i} [ f_\nu ( X_1, \xi_1 ) - f_\nu ( X_0, \xi_0 ) ] \leq - \eps x^{\nu - \bar \alpha} ,  ~\text{for all } x \geq x_0 \text{ and all } i \in \cS .\]      
\item[(ii)]
If $\sum_{k \in \cS} \mu_k \cot ( \chi_k \pi \alpha_k ) > 0$, then  there exist $\nu \in (-1,0)$, $\la_k >0$ ($k \in \cS$), $\eps >0$, and $x_0 \in \RP$ such that
\[ \hskip -5mm Df_\nu(x,i)= \Exp_{x,i} [ f_\nu ( X_1, \xi_1 ) - f_\nu ( X_0, \xi_0 ) ] \leq - \eps x^{\nu - \bar \alpha} ,  ~\text{for all } x \geq x_0 \text{ and all } i \in \cS .\]  
\end{itemize}    
\end{proposition}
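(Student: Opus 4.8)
The plan is to build the required local supermartingale from the weighted Lyapunov function $f_\nu(x,k)=\la_k(1+x)^\nu$ of \eqref{fdef}, with the weights $\la_k>0$ and the exponent $\nu$ chosen so that Lemma~\ref{lem:Df} produces a negative leading term. Write $\bar\kappa:=\sum_{k\in\cS}\mu_k\cot(\chi_k\pi\alpha_k)$; since we are in the case $\max_{k}\chi_k\alpha_k<1$, every $\chi_k\pi\alpha_k$ lies in $(0,\pi)$, so $\bar\kappa$ is finite, and by hypothesis $\bar\kappa<0$ in case (i) and $\bar\kappa>0$ in case (ii). The step at which the cotangent criterion enters is the choice of weights: apply Lemma~\ref{lem:finding-theta} with $b_k:=\pi\bar\kappa-\pi\cot(\chi_k\pi\alpha_k)$. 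This is exactly the admissible choice, since $\sum_{k}\mu_kb_k=\pi\bar\kappa-\pi\bar\kappa=0$; it yields reals $(\theta_k;k\in\cS)$ with $\sum_{j}p(k,j)\theta_j-\theta_k=b_k$ for all $k$. Then set $\la_k:=1+\nu\theta_k$, which is strictly positive for all $k$ once $|\nu|$ is small (as $\cS$ is finite), with $\nu>0$ in case (i) and $\nu<0$ in case (ii), the precise value of $\nu$ to be fixed below.

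Next I would feed these weights into Lemma~\ref{lem:Df}. Since $P$ is stochastic, $P(\mathbf 1+\nu\bt)=\mathbf 1+\nu P\bt$, so $(P\bla)_k=1+\nu(P\bt)_k=\la_k+\nu b_k$ and hence $\tfrac{(P\bla)_k}{\la_k}=1+\nu b_k/\la_k=1+\nu b_k+o(\nu)$, uniformly over the finite set $\cS$. Combining this with the expansion \eqref{eq34}, which in both the one-sided and symmetric cases reads $R^\flat(\alpha_k,\nu)=-1+\nu\pi\cot(\chi_k\pi\alpha_k)+o(\nu)$ as $\nu\to0$, the bracket in Lemma~\ref{lem:Df} becomes
\[ \frac{(P\bla)_k}{\la_k}+R^\flat(\alpha_k,\nu)=\nu\bigl(b_k+\pi\cot(\chi_k\pi\alpha_k)\bigr)+o(\nu)=\nu\pi\bar\kappa+o(\nu),\]
where the $o(\nu)$ is uniform in $k\in\cS$. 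As $\nu$ and $\bar\kappa$ have opposite signs by construction, for $|\nu|$ small enough this right-hand side is at most $-\eps_0$ for some $\eps_0>0$, uniformly in $k$. I now fix such a $\nu$, additionally ensuring $\nu\in(0,\ubar\alpha\wedge1)\subseteq(0,\ubar\alpha)$ in case (i) and $\nu\in(-1,0)$ in case (ii), so that Lemma~\ref{lem:Df} applies for every $k\in\cS$.

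Finally, Lemma~\ref{lem:Df} gives $Df_\nu(x,k)=\chi_k\la_k c_k\,i_{2,1}^{\nu,\alpha_k}\,x^{\nu-\alpha_k}\bigl(\tfrac{(P\bla)_k}{\la_k}+R^\flat(\alpha_k,\nu)\bigr)+o(x^{\nu-\alpha_k})$. The prefactor $\chi_k\la_k c_k\,i_{2,1}^{\nu,\alpha_k}$ is strictly positive — each factor is, using $i_{2,1}^{\nu,\alpha_k}=\Gamma(1+\nu)\Gamma(\alpha_k-\nu)/\Gamma(1+\alpha_k)>0$ for $\nu\in(-1,\alpha_k)$ — and is bounded below by some $\eps_1>0$ by finiteness of $\cS$; hence $Df_\nu(x,k)\le-\eps_0\eps_1 x^{\nu-\alpha_k}+o(x^{\nu-\alpha_k})$. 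Choosing $x_0\ge1$ large enough to subsume the finitely many error terms gives $Df_\nu(x,k)\le-\tfrac{\eps_0\eps_1}{2}x^{\nu-\alpha_k}$ for $x\ge x_0$, and since $\alpha_k\le\bar\alpha$ implies $x^{\nu-\alpha_k}\ge x^{\nu-\bar\alpha}$ for $x\ge1$, we conclude $Df_\nu(x,k)\le-\eps x^{\nu-\bar\alpha}$ for all $x\ge x_0$ and all $k\in\cS$, with $\eps=\eps_0\eps_1/2$. I expect the main obstacle to be purely in the bookkeeping: keeping all the $o(\nu)$ and $o(x^{\nu-\alpha_k})$ estimates uniform over $\cS$, tracking signs of the Gamma-function constants, and verifying that the single chosen $\nu$ simultaneously sits in the admissible range of Lemma~\ref{lem:Df} and makes every bracket negative. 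The conceptual heart is simply the recognition that the Fredholm solvability condition of Lemma~\ref{lem:finding-theta} pins the free additive constant in $b_k$ to $\pi\bar\kappa$, which is precisely the cotangent criterion.
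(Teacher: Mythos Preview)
Your proof is correct and follows essentially the same approach as the paper: choose weights $\la_k=1+\nu\theta_k$ with the $\theta_k$ produced by Lemma~\ref{lem:finding-theta} applied to $b_k=\pi\bar\kappa-\pi\cot(\chi_k\pi\alpha_k)$, then expand the bracket in Lemma~\ref{lem:Df} to first order in $\nu$ and observe that it collapses to $\nu\pi\bar\kappa+o(\nu)$. The paper treats parts~(i) and~(ii) separately with $b_k=-a_k\mp\delta$ (where $a_k=\pi\cot(\chi_k\pi\alpha_k)$ and $\delta=\pi|\bar\kappa|$), which is exactly your unified formula; your write-up is in fact slightly tidier in that it handles both sign cases at once and makes explicit the positivity of $i_{2,1}^{\nu,\alpha_k}$ and the passage from $x^{\nu-\alpha_k}$ to $x^{\nu-\bar\alpha}$.
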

\begin{proof}
Write $a_k = \pi \cot ( \chi_k \pi \alpha_k )$, $k \in \cS$. First we prove part~(i). 
By hypothesis, 
$\sum_{k \in \cS} \mu_k a_k = - \delta$ for some $\delta > 0$.
Set $b_k = -a_k - \delta$ for all $k \in \cS$, so that $\sum_{k \in \cS} \mu_k b_k = 0$.
Lemma~\ref{lem:finding-theta} shows that for these $b_k$, we can find a collection of $\theta_k \in (0,\infty)$ so that
\begin{equation}
\label{eq:theta-rec}
 \sum_{j \in \cS} p (i,j)  \theta_j - \theta_i + a_i = - \delta , ~\text{for all}~ i \in \cS ;\end{equation}
fix these $\theta_k$ for the rest of the proof.
We then choose 
$\bla=(\la_k ; k\in\cS )$ of the form  $\la_k:=\la_k(\nu) = 1 + \theta_k \nu$, for some
$\nu \in (0, 1 \wedge \ubar \alpha)$.
 
Since $i_{2,1}^{\nu,\alpha}>0$ for all $\alpha>0$ and all $\nu\in(0,\alpha)$,  Lemma~\ref{lem:Df} shows that
$Df_\nu(x,i)$ will be negative for all $i$ and all $x$ sufficiently large 
provided that we can find $\nu$ such that
$\frac{(P\bla)_i}{\la_i}+R^\flat (\alpha_i,\nu) < 0$ for all $i$.
By~\eqref{eq34}, writing $\bt = (\theta_k ; k \in \cS)$,
 \begin{align*}
 \frac{(P\bla)_i}{\la_i}+R^\flat (\alpha_i,\nu) & = \frac{(P\bla)_i}{\la_i} -1 + \nu a_i +o(\nu) \\
& = \frac{1+ \nu (P\bt)_i}{1+\nu \theta_i} -1 + \nu a_i +o(\nu)\\
 &= \nu( (P\bt)_i- \theta_i) + \nu a_i   +o(\nu) = -\nu \delta +o(\nu),
 \end{align*}
by~\eqref{eq:theta-rec}.
Therefore, by Lemma~\ref{lem:Df}, we can always find sufficiently small $\nu > 0$ and a vector $\bla=\bla(\nu)$ with strictly positive elements
 for which 
$Df_\nu(x,i) \leq -\eps x^{\nu - \alpha_i}$ for some $\eps >0$, all $i$, and all $x$ sufficiently large. Maximizing over $i$ gives part~(i).

The argument for part~(ii) is similar. Suppose $\nu \in (-1,0)$.
This time,  $\sum_{k \in \cS} \mu_k a_k = \delta$ for some $\delta > 0$,
and we set $b_k = -a_k + \delta$, so that $\sum_{k \in \cS} \mu_k b_k = 0$ once more.
Lemma~\ref{lem:finding-theta} now shows that  we can find $\theta_k$ so that
\begin{equation}
\label{eq:theta-tran}
 \sum_{j \in \cS} p (i,j)  \theta_j - \theta_i + a_i = \delta , ~\text{for all}~ i \in \cS .\end{equation}
With this choice of $\theta_k$ we again set $\lambda_k = 1 +  \theta_k\nu$; note we may assume
$\lambda_k >0$ for all $k$ for $\nu$ sufficiently small.

Again,
$Df_\nu(x,i)$ will be non-negative for all $i$ and all $x$ sufficiently large 
provided that we can find $\bla$ and $\nu$ such that
$\frac{(P\bla)_i}{\la_i}+R^\flat (\alpha_i,\nu) < 0$ for all $i$.
Following a similar argument to before,  we obtain with~\eqref{eq:theta-tran} that
\begin{align*}
 \frac{(P\bla)_i}{\la_i}+R^\flat (\alpha_i,\nu) & = \nu( (P\bt)_i- \theta_i) + \nu a_i   +o(\nu) =  \nu \delta +o(\nu).
 \end{align*}
Thus we can find for $\nu < 0$ close enough to $0$ a vector $\bla=\bla(\nu)$ with strictly positive elements 
for which $Df_\nu(x,i)\leq -\eps x^{\nu - \alpha_i}$ for all $i$ and all $x$ sufficiently large.
\end{proof}

Now we examine the case $\max_{k \in \cS} \chi_k\alpha_k \geq 1$.

\begin{proposition}
\label{pro:recurrence-i}
Suppose that \eqref{ass:basic} and \eqref{ass:tails} hold, and $\max_{k \in \cS} \chi_k \alpha_k \geq 1$. 
Then there exist $\nu \in (0, \alpha_\star)$, $\la_k >0$ ($k \in \cS$), $\eps >0$, and $x_0 \in \RP$ such that for all $i\in\cS$,
\begin{equation}
\label{eq:drift-star}
 Df_\nu(x,i)=\Exp_{x,i} [ f_\nu ( X_1, \xi_1 ) - f_\nu ( X_0, \xi_0 ) ] \leq - \eps x^{\nu - \alpha^\star} ,  ~\text{for all } x \geq x_0.\end{equation}
\end{proposition}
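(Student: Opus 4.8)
The plan is to split $\cS$ according to whether $\chi_k\alpha_k<1$ (where Lemma~\ref{lem:Df} is available) or $\chi_k\alpha_k\geq 1$ (where it is not, and one must instead fall back on Lemma~\ref{lem:big-alpha}), and to build a single Lyapunov function $f_\nu$, with a suitable choice of weights $\la_k$, that produces the required negative drift simultaneously on both kinds of branch. Write $\cS_< := \{k\in\cS : \chi_k\alpha_k<1\}$ and $\cS_\geq := \{k\in\cS : \chi_k\alpha_k\geq 1\}$, so that $\cS_\geq\neq\emptyset$ by hypothesis.

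First I would choose the weights. Put $a_k := \pi\cot(\chi_k\pi\alpha_k)$ for $k\in\cS_<$ (finite there), fix $\delta>0$, set $b_k := -a_k-\delta$ for $k\in\cS_<$, and --- crucially using that $\mu_k>0$ for all $k$ and $\cS_\geq\neq\emptyset$ --- choose the remaining $b_k$ for $k\in\cS_\geq$ so that $\sum_{k\in\cS}\mu_k b_k=0$. Lemma~\ref{lem:finding-theta} then supplies $\theta_k>0$ solving $\sum_j p(k,j)\theta_j-\theta_k=b_k$, so that $(P\bt)_k-\theta_k+a_k=-\delta$ for every $k\in\cS_<$; I would then set $\la_k=\la_k(\nu):=1+\theta_k\nu$, positive for small $\nu>0$. (If $\cS_<=\emptyset$ this step is vacuous and one simply takes $\la_k\equiv 1$.)

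Next, for $k\in\cS_<$ and $\nu\in(0,1\wedge\alpha_\star)\subseteq(-1,1\wedge\alpha_k)$, I would run exactly the computation from the proof of Proposition~\ref{pro:main-supermartingale}: Lemma~\ref{lem:Df} together with~\eqref{eq34} and $\frac{(P\bla)_k}{\la_k}=1+\nu((P\bt)_k-\theta_k)+o(\nu)$ gives $\frac{(P\bla)_k}{\la_k}+R^\flat(\alpha_k,\nu)=-\nu\delta+o(\nu)<0$ for $\nu$ small, hence $Df_\nu(x,k)\leq-\eps_k x^{\nu-\alpha_k}$ for all large $x$; since $\chi_k\alpha_k<1$ forces $\alpha_k=\alpha_k\wedge(1/\chi_k)\leq\alpha^\star$, this is $\leq-\eps_k x^{\nu-\alpha^\star}$ for $x\geq 1$. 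For $k\in\cS_\geq$ I would instead use the explicit forms~\eqref{conditional-increment-sym}/\eqref{conditional-increment-one}: when $k\in\So$ (so $\alpha_k\geq 1$) the two ``$\int_x^\infty$'' integrals are $O(x^{\nu-\alpha_k})$ by~\eqref{int21}--\eqref{int20} while Lemma~\ref{lem:big-alpha}(ii) makes the remaining term at most $-\eps\la_k x^{\nu-1}$ (with a $\log x$ if $\alpha_k=1$), which dominates; when $k\in\Ss$ (so $\alpha_k\geq 2$) the first, second and fourth integrals are $O(x^{\nu-\alpha_k})$ by~\eqref{int21}, \eqref{int20}, \eqref{int0} while Lemma~\ref{lem:big-alpha}(i) (using $\nu\in(0,1)$) makes the symmetric second-difference term at most $-\eps\frac{\la_k}{2}x^{\nu-2}$ (with a $\log x$ if $\alpha_k=2$), again dominating. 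In either case $Df_\nu(x,k)\leq-\eps' x^{\nu-\alpha^\star}$ for large $x$, using that $\alpha_k\wedge(1/\chi_k)$ equals $1$ or $2$ and is therefore $\leq\alpha^\star$. Finally I would fix $\nu\in(0,1\wedge\alpha_\star)\subseteq(0,\alpha_\star)$ small enough for the finitely many light-branch estimates, and take $\eps$, $x_0$ to be the minimum and maximum of the finitely many branchwise constants.

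The step needing the right idea is the weight construction: unlike in Proposition~\ref{pro:main-supermartingale}, we no longer assume $\sum_k\mu_k\cot(\chi_k\pi\alpha_k)$ has a definite sign, so one cannot impose $b_k=-a_k-\delta$ on \emph{every} branch. The observation that rescues the argument is that the heavy branches impose no constraint at their own leading order --- Lemma~\ref{lem:big-alpha} already delivers a strictly negative drift there for \emph{any} positive weights --- so the corresponding $b_k$ may be used as free ``slack'' to enforce the solvability condition $\sum_k\mu_k b_k=0$ of Lemma~\ref{lem:finding-theta}. Everything else is bookkeeping: matching exponents, checking that $\nu<1$ is compatible with all required ranges, and noting that the exponent obtained on each branch ($-\alpha_k$, $-1$, or $-2$) never exceeds $-\alpha^\star$, so that $x^{\nu-(\cdot)}\geq x^{\nu-\alpha^\star}$ for $x\geq 1$.
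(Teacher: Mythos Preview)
Your argument is correct, and it is genuinely different from --- and simpler than --- the paper's own proof. Both proofs begin with the same observation: on the heavy branches $\cS_\geq=\cS_0$, Lemma~\ref{lem:big-alpha} forces the dominant term in $Df_\nu$ to be negative irrespective of the choice of positive weights $\la_k$. The divergence is in how the light branches $\cS_<$ are handled. The paper builds a layered partition $\cS_0,\cS_1,\ldots,\cS_M$ indexed by graph distance from $\cS_0$ in the transition diagram of $P$, takes the weights constant on each layer, and constructs the layer-to-layer ratios via the recursive matrix scheme of Lemma~\ref{lem:algebraic2} and Corollary~\ref{cor:algebraic2-open}. You instead reuse the machinery of Proposition~\ref{pro:main-supermartingale} verbatim: you invoke Lemma~\ref{lem:finding-theta} once, with the $b_k$ for $k\in\cS_\geq$ chosen freely to enforce the solvability condition $\sum_k\mu_k b_k=0$, and then set $\la_k=1+\theta_k\nu$. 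This works precisely because the drift on the heavy branches is insensitive to these free choices at leading order, so the ``slack'' in the Fredholm-alternative condition can be absorbed there.

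What each buys: your route is shorter and makes Lemma~\ref{lem:algebraic2} and Corollary~\ref{cor:algebraic2-open} unnecessary for the purposes of this proposition; it also makes transparent that the same perturbative weight construction as in the subcritical case suffices. The paper's layered construction is more structural --- it produces weights ordered by distance to the heavy set and does not appeal to the stationary distribution $\mu$ at all --- which may be of independent interest, but is not needed for the bare conclusion~\eqref{eq:drift-star}.
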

Before starting the proof of this proposition, we introduce the following notation. For $k\in\cS$ denote by  $a_k=\pi \cot(\pi \chi_k \alpha_k)$ and define the vector $\ba=(a_k ; k\in\cS)$.
 For $i\in\cS$ and $A \subseteq \cS$, write $P(i,A) =\sum_{j\in A} p(i,j)$.
 Define $\cS_0=\{i\in\cS: \chi_i \alpha_i \geq 1\}$ and recursively, for $m \geq 1$, 
\[\cS_m:= \left\{ i \in\cS\setminus \cup_{\ell=0}^{m-1} \cS_\ell: P(i,\cS_{m-1})>0 \right\}. \]
Denote by $M:=\max \{m\geq 0: \cS_m\ne \emptyset\}$. Since $P$ is irreducible,  the collection
$(\cS_m ; m=0,\ldots, M )$ is a partition of $\cS$. 

\begin{proof}[Proof of Proposition \ref{pro:recurrence-i}]
It suffices to find $x_0\in\R_+$, $\eps >0$, $\nu \in (0, \alpha_\star)$, and an open, non-empty subset $G$ of the positive cone $\mathcal{C}:= (0,\infty)^{|\cS|}$ such that 
\begin{equation*}
G\subseteq \cap_{i\in\cS} \{\bla\in\mathcal{C}: Df_\nu(x,i)  \text{ satisfies condition (\ref{eq:drift-star})}. \}
\end{equation*}
Now for $i\in\cS_0$, inequality~\eqref{eq:drift-star} is satisfied thanks to \eqref{conditional-increment-sym}, 
\eqref{conditional-increment-one} and 
Lemmas~\ref{lem:all-integrals} and~\ref{lem:big-alpha} 
for every choice of $\bla$ (with positive components) and $\nu \in (0, \ubar \alpha)$.
 Hence the previous condition  reduces to the requirement 
\begin{equation}
\label{eq:open-modified}
G\subseteq \cap_{i\in\cS\setminus \cS_0} \{\bla\in\mathcal{C}: Df_\nu(x,i)  \text{ satisfies condition \eqref{eq:drift-star}}.\}
\end{equation}
The rest of the proof is devoted into establishing this fact.

Suppose that $i\in\cS_m$ with $m=1,\ldots, M$. 
Then $\chi_i \alpha_i < 1$ by construction, and Lemma~\ref{lem:Df} shows that
condition \eqref{eq:drift-star} will be satisfied if the system of inequalities
\begin{equation}
\label{eq:inequalities}
\frac{(P\bla)_i}{\la_i} +R^\flat(\alpha_i, \nu)<0, ~ i=1, \ldots, M
\end{equation} have non-trivial solutions $\bla$ for sufficiently small $\nu$. 
Thanks to the  Lemma~\ref{lem:Df}, we have  $R^\flat (\alpha_i,\nu)= -1+\nu a_i +o(\nu)$. We will obtain~\eqref{eq:drift-star}
if, for  $\nu$ sufficiently small,  
\begin{equation}
\label{eq:drift-needed}
\cR=\cR(P, \nu,\ba):=\bigcap_{m=1}^M \left\{\bla\in\mathcal{C}: \frac{(P\bla)_i}{\la_i}  < 1 - \nu a_i, i\in\cS_m\right\}\ne\emptyset.
\end{equation}
We seek  a  solution $\bla\in\cR$  (for sufficiently small $\nu$),  under the Ansatz that the $\la_j$
 are constant on every $\cS_\ell$, i.e., the vector 
$\bla$ has the form $\hat{\bla}$ with  $\hat{\la}_j=\la^{(\ell)}$  for all $j\in\cS_\ell$.
Suppose that $i \in \cS_m$. Then  $p(i,j) = 0$ for $j \in \cS_\ell$ with $\ell < m-1$, so that
\begin{equation}
\label{eq:Plambda}
\frac{(P\hat{\bla})_i}{\hat{\la}_i}= \sum_{j\in\cS} p(i,j) \frac{\hat{\la}_j}{\hat{\lambda}_i} 
= \sum_{\ell=0}^M P(i,\cS_\ell)
\frac{\lambda^{(\ell)}}{\la^{(m)}} = \sum_{\ell=m-1}^MP(i,\cS_\ell)
\frac{\lambda^{(\ell)}}{\la^{(m)}}.
\end{equation}
We introduce the auxiliary matrix 
$\rho =(\rho_{k,\ell} ; k,\ell \in \{0, \ldots, M\} )$ defined by $\rho_{k,\ell}:= \la^{(k)}/ \la^{(\ell)}$. 
By construction,  $\rho_{k,k}=1$ and $\rho_{k,\ell}= 1/\rho_{\ell,k}$. Let 
\begin{equation}
\label{eq:L-def}
 L_{k, \ell} =  \frac{1}{\nu} \log \rho_{k,\ell} = - \frac{1}{\nu} \log \rho_{\ell, k} = \frac{1}{\nu} \left( \log \lambda^{(k)} - \log \lambda^{(\ell)} \right) .
\end{equation}
It suffices to determine the upper triangular part of $\rho$, or, equivalently,
the lower triangular array $(L_{k, \ell} ; 0 \leq \ell < k \leq M )$.
We do so recursively, starting with $L_{M, M-1}$. In the case $i\in\cS_M$, the condition
in~\eqref{eq:drift-needed} reads, by~\eqref{eq:Plambda},
\[\rho_{M-1, M} < \frac {1-P(i,\cS_M) -\nu a_i}{P(i,\cS_{M-1})}= \frac{P(i, \cS_M^\rc)}{P(i,\cS_{M-1})} -\nu \frac{a_i}{P(i,\cS_{M-1})}= 1-\nu \frac{a_i}{P(i,\cS_{M-1})}.\]
On introducing the constant $A_M=\max_{i\in\cS_M} \frac{a_i}{P(i,\cS_M)}$, it is enough to choose $\rho_{M-1,M} < 1-\nu A_M= \exp(-\nu A_M) +o(\nu)$. 
In other words, with $\rho_{\ell,k}= \exp(-\nu L_{k,\ell})$, we see that the choice $L_{M, M-1} >A_M$ satisfies the condition 
in~\eqref{eq:drift-needed} for $i\in \cS_M$.
 
 Suppose now that we have determined the condition in~\eqref{eq:drift-needed} for $i\in \cup_{\ell = m}^M \cS_\ell$. 
 Then, for $i \in \cS_{m-1}$ the condition amounts, via~\eqref{eq:Plambda}, to 
 \[
 \rho_{m-1, m} P(i, \cS_{m-1})  < 1- \nu a_i -P(i, \cS_m) - \sum_{\ell=m+1}^M \rho_ {\ell,m} P(i, \cS_\ell).\]
Using the fact that for $\ell < m$ we have 
$\rho_{\ell,m} = \exp(-\nu L_{m, \ell})$ and for $\ell > m$ we have $\rho_{\ell , m} = \exp(\nu L_{\ell, m})$, the above expression becomes, up to $o(\nu)$ terms
\begin{align*}
1-\nu L_{m,m-1} & < \frac{P(i, (\cup_{\ell=m}^M \cS_\ell)^c) }{P(i,\cS_{m-1})} -\nu \sum _{\ell=m+1}^M
\frac{P(i, \cS_l)}{P(i, \cS_{m-1})} L_{\ell,m} - \nu \frac{a_i}{P(i, \cS_{m-1})}\\
&= 1- \nu \sum _{\ell=m+1}^M
\frac{P(i, \cS_\ell)}{P(i, \cS_{m-1})} L_{\ell,m} - \nu \frac{a_i}{P(i, \cS_{m-1})}.
\end{align*}
Introducing the upper triangular matrix $U=(U_{m,n} ;  0 \leq m<n\leq M )$ defined
 by $U_{m,n}= \max_{i\in \cS_m} \frac{P(i, \cS_n)}{P(i, \cS_{m-1})}$ for $m\geq1$,
and the vector $A_m=\max_{i\in\cS_m} \frac{a_i}{P(i, \cS_{m-1})}$ for $m=1, \ldots, M$,
the condition in~\eqref{eq:drift-needed}  is satisfied if we solve the recursion
\[L_{m,m-1} >(UL)_{m, m} +A_m, \ \text{ for } m=M, M-1, \ldots, 1,\]
with initial condition $L_{M,M-1}>A_M$ and condition $L_{\ell,m} >0$ for $0 \leq m < \ell \leq M$.
 Additionally, we have from~\eqref{eq:L-def} that
\[ L_{k, \ell} = L_{k, k-1} + L_{k-1, k-2} + \cdots + L_{\ell +1, \ell}, ~~ 0 \leq \ell < k \leq M . \] 
Hence by Corollary \ref{cor:algebraic2-open}, there exist non-trivial solutions for the lower triangular matrix
$L$ within an  algorithmically determined region $\mathcal{L}$. The positivity of the lower triangular part of $L$ implies that the components of $\bla$ are ordered: $\la^{(m)}<\la^{(m+1)}$ for $0\leq m<M-1$. By choosing $\lambda^{(M)}$ sufficiently large, we can guarantee that the first term  (and consequently the entire sequence) satisfies $\lambda^{(0)}>1$. 
\end{proof}

We are almost ready to complete the proof of Theorem~\ref{thm:recurrence}, excluding part (b)(iii); first we need one more technical result concerning non-confinement.

\begin{lemma}
\label{l:orw_non-confinement}
Suppose that~\eqref{ass:basic} and~\eqref{ass:tails} hold.  Then $\limsup_{n \to \infty}  X_n = \infty$, a.s.
\end{lemma}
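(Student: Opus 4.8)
The plan is to prove the equivalent statement $\Pr[\sup_{n\ge 0} X_n < \infty] = 0$. I would first record two elementary reductions: $\{\sup_n X_n = \infty\} = \{\limsup_n X_n = \infty\}$ (on $\{\sup_n X_n = \infty\}$ the set $\{n : X_n > M\}$ is infinite for every $M$, since otherwise $X_n$ would be bounded), and $\{\sup_n X_n < \infty\} = \bigcup_{K\in\N}\{\sup_n X_n \le K\}$. Hence it suffices to show $\Pr[\sup_n X_n \le K] = 0$ for each fixed $K \in \N$.

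The crux is a uniform lower bound on the one-step probability of escaping $[0,K]$: I claim $p_K := \inf_{i \in \cS} \inf_{0 \le x \le K} \Pr_{x,i}[X_1 > K] > 0$. To see this I would use the algorithmic description of the dynamics below~\eqref{complex-transition}. For $i \in \Ss$, an increment $\varphi_1 > K$ already gives $X_1 = x + \varphi_1 > K$, so $\Pr_{x,i}[X_1 > K] \ge \tfrac12 \int_K^\infty v_i(y)\,\ud y$. For $i \in \So$ the increment is negative, and an increment $\varphi_1 < -K-x$ sends the walk across the origin with $X_1 = |x+\varphi_1| = -x-\varphi_1 > K$, so $\Pr_{x,i}[X_1 > K] \ge \int_{K+x}^\infty v_i(y)\,\ud y \ge \int_{2K}^\infty v_i(y)\,\ud y$. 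Each bound is strictly positive because $v_i \in \fD_{\alpha_i,c_i}$ has $v_i(y) > 0$ for all large $y$, and is independent of $x$; taking the minimum over the finite set $\cS$ yields $p_K > 0$.

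With this in hand I would run a conditional Borel--Cantelli argument. Let $\cF_n := \sigma((X_0,\xi_0),\dots,(X_n,\xi_n))$, put $B_K := \{\sup_n X_n \le K\}$, and consider $A_n := \{X_{n+1} > K\} \in \cF_{n+1}$. By the Markov property and time-homogeneity, $\Pr[A_n \mid \cF_n] = \Pr_{X_n,\xi_n}[X_1 > K] \ge p_K$ on $B_K$, for every $n$; hence $\sum_{n\ge 0}\Pr[A_n \mid \cF_n] = \infty$ on $B_K$. By L\'evy's extension of the second Borel--Cantelli lemma, almost surely on $B_K$ the events $A_n$ occur infinitely often, i.e.\ $X_{n+1} > K$ for infinitely many $n$ --- which is incompatible with the definition of $B_K$. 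Therefore $\Pr(B_K) = 0$, and since $K \in \N$ was arbitrary the lemma follows.

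The argument is soft and uses only that each $w_i$ has heavy (indeed merely unbounded) tails in the relevant direction, so I do not expect a serious obstacle. The single point to keep in mind is that for one-sided branches the escape from $[0,K]$ is realised not by a large forward jump but by a large jump towards the origin that overshoots and lands far out on some (irrelevant) branch; finiteness of $\cS$ is what converts the per-branch positivity into the uniform bound $p_K > 0$.
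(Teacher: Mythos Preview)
Your argument is correct and follows essentially the same route as the paper: establish a uniform positive one-step probability of escaping any bounded interval $[0,K]$, then conclude via a conditional Borel--Cantelli argument. The paper phrases the escape estimate as $\Pr[X_{n+1} - X_n \geq 1 \mid (X_n,\xi_n)=(y,i)] \geq \eps_x$ for $y\in[0,x]$ and leaves the conclusion to ``a standard argument,'' which is exactly the L\'evy--Borel--Cantelli step you wrote out; your handling of the symmetric branches via a direct forward jump (rather than an overshoot) is a harmless simplification.
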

\begin{proof}
We claim that for each $x \in \RP$,
there exists $\eps_x >0$ such that
\begin{equation}
\label{e:local_escape_orw}
 \Pr [  X_{n+1}  -  X_n  \geq 1 \mid ( X_n , \xi_n) = (y,i)  ] \geq \eps_x, ~\text{for all}~ y \in [0,x] \text{ and all } i\in \cS .
\end{equation}
Indeed, given $(x,i) \in \RP \times \cS$, we may choose $j \in \cS$
so that $p(i,j) >0$ and we may choose $z_0 \geq x$ sufficiently large so that, for some $\eps >0$, $v_i (z) \geq \eps z^{-1-\alpha_i}$
 for all $z \geq z_0$. Then if $y \in [0,x]$,
\begin{align*}
\Pr [ X_{n+1} \geq y + 1  \mid (X_n , \xi_n) = (y,i) ] & \geq p(i,j) \int_{2z_0+1}^\infty \eps z^{-1-\alpha_i} \ud z = \eps_{x, i} >0 , \end{align*}
which gives~\eqref{e:local_escape_orw}.
The local escape property~\eqref{e:local_escape_orw} implies the $\limsup$ result by a standard argument.
\end{proof}

\begin{proof}[Proof of Theorem~\ref{thm:recurrence}.]
We are not yet ready to prove part (b)(iii): we defer that part of the proof until Section \ref{sec:critical}.

The other parts of the theorem follow from the supermartingale estimates in this section together with the
technical results from Section~\ref{sec:criteria}. Indeed, under the conditions of part (a) or (b)(i)
of the theorem, we have from Proposition~\ref{pro:recurrence-i} or Proposition~\ref{pro:main-supermartingale}(i) respectively that
for suitable $\nu >0$ and $\la_k$,  
\[ \Exp [ f_\nu ( X_{n+1}, \xi_n ) - f_\nu (X_n , \xi_n ) \mid X_n , \xi_n ] \leq 0 , ~~\text{on}~ \{ X_n \geq x_0\} .\]
Thus we may apply Lemma~\ref{l:adapted_rec_R}, which together with Lemma~\ref{l:orw_non-confinement} shows that
$\liminf_{n \to \infty} X_n \leq x_0$, a.s. Thus there exists an interval $I \subseteq [0,x_0+1]$
such that 
  $(X_n,\eta_n) \in I \times \{ i \}$ i.o., where $i$ is some fixed element of $\cS$. 
Let $\tau_0 := 0$ and for $k \in \N$ define $\tau_k = \min \{ n > \tau_{k-1}+1 : (X_n, \eta_n) \in I \times \{ i \} \}$.
Given $i \in \cS$, we may choose $j, k \in \cS$ such that $p(i,j) > \delta_1$ and $p(j,k) > \delta_1$ for some $\delta_1 >0$;
let $\gamma = \alpha_i \vee \alpha_j$. Then we may choose $\delta_2 \in (0,1)$ and  $z_0 \in \RP$ such that 
$v_i (z) > \delta_2 z^{-1-\gamma}$
and
$v_j (z ) > \delta_2 z^{-1-\gamma}$ for all $z \geq z_0$. Then for any $\eps \in (0,1)$,
\begin{align*} 
\Pr [   X_{\tau_k +2 }   < \eps \mid X_{\tau_k} ] & \geq \Pr [  X_{\tau_k +2 }  < \eps ,  X_{\tau_k + 1} \in [z_0 +1, z_0 +2 ] , \eta_{\tau_k+1} = j \mid X_{\tau_k} ] \\
& \geq \delta_1^2 \delta_2^2  \eps (z_0 +3)^{-1-\gamma} (x_0 + z_0 + 3)^{-1-\gamma} ,\end{align*}
uniformly in $k$. Thus L\'evy's extension of the Borel--Cantelli lemma  
shows $X_n < \eps$ infinitely often. Thus, since $\eps \in (0,1)$ was arbitrary, $\liminf_{n \to \infty}  X_n = 0$, a.s.

On the other hand, under the conditions of part   (b)(ii)
of the theorem, we have from  Proposition~\ref{pro:main-supermartingale}(ii)  that
for suitable $\nu <0$ and $\la_k$,  
\[ \Exp [ f_\nu ( X_{n+1}, \xi_n ) - f_\nu (X_n , \xi_n ) \mid X_n , \xi_n ] \leq 0 , ~~\text{on}~ \{ X_n \geq x_1\} ,\]
for any $x_1$ sufficiently large.
Thus we may apply Lemma~\ref{l:adapted_good_escape_R}, which shows that
for any $\eps >0$ there exists $x \in (x_1, \infty)$
  for which, for all $n \geq 0$,
\[ \Pr \Bigl[ \inf_{m \geq n} X_m \geq x_1 \Bigmid \cF_n \Bigr] \geq 1 - \eps, \text{ on } \{ X_n \geq x \} . \]
Set $\sigma_x = \min \{ n \geq 0 : X_n  \geq x \}$. 
 Then, on $\{ \sigma_x < \infty \}$,
\[ \Pr \Bigl[   \inf_{m \geq \sigma_x} X_m > x_1 \Bigmid \cF_{\sigma_x} \Bigr] \geq 1 - \eps, \as \]
But on $\{ \sigma_x < \infty \} \cap \{ \inf_{m \geq \sigma_x} X_m > x_1 \}$
we have $\liminf_{m \to \infty} X_m \geq x_1$, so
\begin{align*}
\Pr \Bigl[  \liminf_{m \to \infty} X_m \geq x_1 \Bigr] & \geq 
\Exp \left[ \Pr \Bigl[   \inf_{m \geq \sigma_x} X_m > x_1 \Bigmid \cF_{\sigma_x} \Bigr] \1{ \sigma_x < \infty  } \right] \\
& \geq (1 -\eps ) \Pr [ \sigma_x < \infty ] = (1-\eps),\end{align*}
by Lemma~\ref{l:orw_non-confinement}. Since $\eps >0$ was arbitrary, we get $\liminf_{m \to \infty} X_m \geq x_1$, a.s.,
and since $x_1$ was arbitrary we get $\lim_{m \to \infty}  X_m  = \infty$, a.s.
\end{proof}

\section{Existence or non-existence of moments}
\label{sec:moments}

\subsection{Technical tools}

The following result is a straightforward reformulation of Theorem~1 of~\cite{aim}.

\begin{lemma}
\label{lem:AIM}
Let $Y_n$ be  an integrable $\cF_n$-adapted stochastic process, taking values in an
unbounded subset of~$\RP$, with $Y_0 = x_0$ fixed.
For $x >0$, let $\sigma_x := \inf \{ n \geq 0 : Y_n \leq x \}$.
Suppose that there exist $\delta>0$, $x >0$, and $\gamma < 1$ such that for
any $n \geq 0$,
\begin{equation}
\label{eq:AIM1}
\Exp [ Y_{n+1}-Y_n \mid \cF_n ] \leq -\delta Y_n^{\gamma},  
 \text{ on } \{n < \sigma_x \}.
\end{equation}
Then, for any $p \in [0,1/(1-\gamma))$, 
$\Exp [ \sigma_x^p ]  
< \infty$.
\end{lemma}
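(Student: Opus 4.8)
This is, as the statement says, a reformulation of Theorem~1 of~\cite{aim}, so the plan is to record the correspondence of hypotheses and quote that result; I would also sketch the mechanism behind it, namely a Lyapunov function in the pair (value, time). I would fix $p\in(0,1/(1-\gamma))$ (the case $p=0$ is trivial) and set $\beta:=1-\gamma$, so that $p\beta<1$. The goal is to exhibit a nonnegative supermartingale of the form $M_n:=V(Y_{n\wedge\sigma_x},\,n\wedge\sigma_x)$, where $V(y,m):=(\psi(y)+\eps m)^p$ for a suitable nondecreasing $\psi\colon\RP\to\RP$ with $\psi\ge 0$ and some $\eps>0$, chosen so that
\[
\Exp[V(Y_{n+1},n+1)\mid\cF_n]\le V(Y_n,n)\qquad\text{on }\{n<\sigma_x\}.
\]
Once this is in hand, $M_n$ is a nonnegative supermartingale, so $\eps^p\,\Exp[(n\wedge\sigma_x)^p]\le\Exp[M_n]\le M_0=\psi(x_0)^p<\infty$ for every $n$, and monotone convergence gives $\Exp[\sigma_x^p]\le(\psi(x_0)/\eps)^p<\infty$.

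The substance is choosing $\psi$ and verifying the one-step inequality. First I would record a preliminary observation, valid for any $\gamma<1$: since $Y_{n+1}\ge 0$, on $\{Y_n>x\}$ one has $0\le\Exp[Y_{n+1}\mid\cF_n]\le Y_n-\delta Y_n^\gamma$, hence there $Y_n^{\beta}\ge\delta$ and $\Exp[Y_{n+1}\mid\cF_n]\le Y_n(1-r_n)$ with $r_n:=\delta Y_n^{-\beta}\in(0,1]$; in particular the drift is automatically at least $-Y_n$. When $\gamma\in[0,1)$, so $\beta\in(0,1]$, the choice $\psi(y)=(\beta\delta)^{-1}\eps\,y^{\beta}$ works: a short computation shows $y\mapsto(\psi(y)+b)^p$ is concave on $\RP$ for each $b\ge 0$ (one uses $p\beta\le 1$ and $\beta\le 1$; the sign of the second derivative is that of $(p\beta-1)\psi(y)+(\beta-1)b$), so a conditional Jensen inequality together with monotonicity and $\Exp[Y_{n+1}\mid\cF_n]\le Y_n(1-r_n)$ reduces the one-step inequality to the scalar bound $(\beta\delta)^{-1}\eps\,Y_n^{\beta}\bigl(1-(1-r_n)^{\beta}\bigr)\ge\eps$, which follows from $1-(1-r)^{\beta}\ge\beta r$ for $r\in[0,1]$, $\beta\le1$, together with $Y_n^{\beta}r_n=\delta$. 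This would give the lemma, with an explicit constant, for every $\gamma\in[0,1)$ and every admissible $p$ (note this range includes $p\ge 1$ when $\gamma>0$).

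The remaining case $\gamma<0$---where $\beta>1$ and necessarily $p<1$, and which is exactly the regime used in Section~\ref{sec:moments}---is the hard part. There $y\mapsto(cy^{\beta}+b)^p$ fails to be concave near the origin, a conditional Jensen inequality points the wrong way, and, more fundamentally, the drift of $Y_n^{1-\gamma}$ cannot be controlled from the one-sided hypothesis because $Y_n^{\gamma}$ may be arbitrarily small when $Y_n$ is large, so a single globally good Lyapunov function of the above shape does not exist. This is precisely what Theorem~1 of~\cite{aim} is designed to handle, via a truncation-and-iteration scheme---controlling the passage time across successive (say dyadic) levels by the mechanism above on bounded windows and then summing, the geometric growth of the window lengths being absorbed by the constraint $p\beta<1$. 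My plan would therefore be to quote that theorem: checking its hypotheses against ours is immediate given the preliminary observation, and the only genuinely non-elementary ingredient---were one to reprove everything from scratch---is this $\gamma<0$ step.
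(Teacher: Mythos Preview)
Your plan matches the paper exactly: the paper gives no proof at all for this lemma, merely stating it as a reformulation of Theorem~1 of~\cite{aim}, and your primary proposal is likewise to quote that result after checking the hypotheses. Your additional self-contained argument for the range $\gamma\in[0,1)$ via the concave Lyapunov function $y\mapsto(cy^{\beta}+b)^p$ is correct and a nice bonus, and your diagnosis of why that argument breaks for $\gamma<0$ (loss of global concavity near the origin when $\beta>1$) is accurate; deferring that case to~\cite{aim} is exactly what the paper does.
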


The following companion result on non-existence of moments is a reformulation of Corollary~1 of~\cite{aim}.

\begin{lemma}
\label{lem:AIM2}
Let $Y_n$ be  an integrable $\cF_n$-adapted stochastic process, taking values in an
unbounded subset of~$\RP$, with $Y_0 = x_0$ fixed.
For $x >0$, let $\sigma_x := \inf \{ n \geq 0 : Y_n \leq x \}$.
Suppose that there exist $C_1, C_2 >0$, $x >0$, $p >0$ and $r > 1$ such that for
any $n \geq 0$, on $\{ n < \sigma_x \}$ the following hold:
\begin{align}
\label{eq:AIM2a}
\Exp [ Y_{n+1}-Y_n \mid \cF_n ] & \geq - C_1 ; \\
\label{eq:AIM2b}
\Exp [ Y^r_{n+1}-Y^r_n \mid \cF_n ] & \leq C_2 Y_n^{r-1} ; \\
\label{eq:AIMc}
\Exp [ Y_{n+1}^p - Y_n^p \mid \cF_n ] & \geq 0 
.\end{align} 
Then for any $q >p$,  $\Exp [ \sigma_x^q ] = \infty$ for $x_0 > x$.
\end{lemma}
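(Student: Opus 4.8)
The plan is to deduce the lemma directly from Corollary~1 of~\cite{aim}, of which the statement is a (largely cosmetic) reformulation; so the work is to check that hypotheses~\eqref{eq:AIM2a}--\eqref{eq:AIMc} coincide, after translation of notation, with the hypotheses required there. Conceptually the mechanism is as follows. Condition~\eqref{eq:AIMc} says exactly that $Y_{n \wedge \sigma_x}^p$ is a non-negative submartingale, so $\Exp[Y_{n \wedge \sigma_x}^p] \geq x_0^p$ for all $n$; since $Y_{\sigma_x} \leq x < x_0$ on $\{\sigma_x \leq n\}$, this forces $\Exp[ Y_n^p \1{\sigma_x > n} ] \geq x_0^p - x^p > 0$ for every $n$, i.e.\ the process must carry a non-negligible amount of $p$-th moment out to arbitrarily large times. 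Meanwhile~\eqref{eq:AIM2a} (drift bounded below) and~\eqref{eq:AIM2b} (growth of $r$-th moments controlled) prevent $Y$ from escaping to infinity quickly; balancing these two effects pins $\Pr[\sigma_x > n]$ below by a polynomial in $n$ with exponent $p$, whence $\Exp_{x_0}[\sigma_x^q] = \infty$ for every $q > p$.

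Concretely I would proceed in three steps. First, observe that~\eqref{eq:AIMc} is verbatim the submartingale hypothesis of~\cite[Corollary~1]{aim} and~\eqref{eq:AIM2a} is verbatim its lower-drift hypothesis, while the integrability, $\cF_n$-adaptedness, and the definition of $\sigma_x$ match directly. Second, reconcile~\eqref{eq:AIM2b} with the increment-moment hypothesis used in~\cite{aim}: using the elementary superadditivity inequality $(a+b)^r - a^r \geq b^r$ for $a,b \geq 0$ and $r \geq 1$, one obtains on $\{n < \sigma_x\}$ that $\Exp[ (Y_{n+1}-Y_n)_+^r \mid \cF_n ] \leq \Exp[ (Y_{n+1}^r - Y_n^r)_+ \mid \cF_n ] \leq C_2 Y_n^{r-1} + Y_n^r$ (the last step bounding the negative part of $Y_{n+1}^r - Y_n^r$ by $Y_n^r$), which is the required control on the upward increments; moreover, writing $a_n = \Exp[Y_{n\wedge\sigma_x}^r]$, condition~\eqref{eq:AIM2b} and Jensen's inequality give $a_{n+1} - a_n \leq C_2 a_n^{(r-1)/r}$, and a Gronwall-type argument then yields $a_n = O(n^r)$. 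Third, invoke the conclusion of~\cite[Corollary~1]{aim}: $\Exp_{x_0}[\sigma_x^q] = \infty$ for all $q > p$ whenever $x_0 > x$.

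Since the substantive content lives in~\cite{aim}, the main obstacle here is merely keeping the dictionary faithful; there is no essential new difficulty. It is worth recording, however, where the real work in~\cite{aim} lies: a naive estimate — bounding $\Exp[ Y_n^p \1{\sigma_x > n} ]$ by a single application of H\"older's inequality against $a_n = O(n^r)$ — gives only non-existence of moments above a threshold strictly larger than $p$ (of order $pr/(r-p)$ when $r > p$), which is far from sharp. Obtaining the sharp threshold $q > p$ requires bootstrapping the tail bound on $\sigma_x$ (equivalently, iterating the moment estimate), and it is this iteration that is imported from~\cite{aim}.
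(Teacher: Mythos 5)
Your proposal coincides with the paper's treatment: the paper gives no proof of this lemma beyond noting that it is a reformulation of Corollary~1 of~\cite{aim}, which is exactly your strategy, and your supplementary sanity checks (the submartingale lower bound $\Exp[ Y_n^p \1{\sigma_x > n}] \geq x_0^p - x^p$ and the Gronwall-type estimate $\Exp[Y_{n\wedge\sigma_x}^r] = O(n^r)$) are correct. One minor remark: condition~\eqref{eq:AIM2b} is already the form in which the moment hypothesis appears in~\cite{aim}, so your superadditivity ``reconciliation'' is superfluous --- and just as well, since the bound it yields, $C_2 Y_n^{r-1} + Y_n^r$, would be too weak to serve as a genuine $O(Y_n^{r-1})$ control on upward increments if that were what the cited result required.
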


\subsection{Proof of Theorem~\ref{thm:moments}}

\begin{proof}[Proof of Theorem~\ref{thm:moments}.]
Under conditions (a) or (b)(i) of Theorem~\ref{thm:recurrence},
we have from Propositions~\ref{pro:recurrence-i} or~\ref{pro:main-supermartingale} respectively that
there exist positive $\lambda_k$ and constants $\eps >0$, $\beta >0$ and $\nu \in (0,\beta)$ such that,
\[ D f_\nu (x,i) \leq - \eps x^{\nu - \beta} , \text{ for all } x \geq x_0 \text{ and all } i.\]
Let $Y_n = f_\nu ( X_n, \xi_n)$. Then  $Y_n$ is bounded above and below by positive constants times $(1+X_n)^\nu$,
so we have that~\eqref{eq:AIM1} holds for $x$ sufficiently large with $\gamma = 1 - (\beta / \nu)$.
It follows from Lemma~\ref{lem:AIM} that $\Exp [ \sigma_x^p ]  
< \infty$
for $p \in (0, \nu / \beta)$, which gives the claimed existence of moments result.

It is not hard to see that \emph{some} moments of the return time fail to exist, due to the heavy-tailed nature of the model, and an argument is easily
constructed using the `one big jump' idea: a similar idea is used in~\cite{hmmw}. We sketch the argument. For any $x, i$, for all $y$ sufficiently large 
we have $\Pr_{x,i} [ X_1 \geq  y-x ] \geq \eps y^{-\bar \alpha}$. Given such a first jump, with uniformly positive probability
the process takes time at least of order $y^\beta$ to return to a neighbourhood of zero (where $\beta$ can be bounded in terms of $\ubar \alpha$);
this can be proved using a suitable maximal inequality as in the proof of Theorem~2.10 of~\cite{hmmw}.
Combining these two facts shows that with   probability of order $y^{-\bar \alpha}$ the return time to a neighbourhood of the origin exceeds
order $y^\beta$. This polynomial tail bound yields non-existence of sufficiently high moments.
\end{proof}

\subsection{Explicit cases: Theorems~\ref{thm:symmetric_random_walk_moments} and~\ref{thm:one-sided_moments}}

We now restrict attention to the  case $\cS = \{ 1,2 \}$ with $\alpha_1 = \alpha_2 = \alpha$
and $\chi_1 = \chi_2 = \chi$, so both half-lines are of the same type.
Take $\lambda_1 = \lambda_2 = 1$ and $\nu \in (0,  \alpha)$, so that $f_\nu (x , i ) = (1 +x)^\nu$.
Then Lemma~\ref{lem:Df} shows that, for $i \in \cS^\flat$, $\flat \in \{ {\rm sym}, {\rm one} \}$,
\begin{align}
\label{eq:D-C}
 D f_\nu (x,i) & = \chi c_i x^{\nu - \alpha} C^\flat (\nu, \alpha ) + o (x^{\nu -\alpha} ) ,\end{align}
where
\begin{align} C^{\rm sym} (\nu , \alpha ) & =  i^{\nu, \alpha}_{2,1} + i^{\nu,\alpha}_0 +  i^{\nu,\alpha}_1 - i^\alpha_{2,0}  ; \nonumber\\
 C^{\rm one} (\nu , \alpha ) & =  i^{\nu, \alpha}_{2,1} + \tilde i^{\nu,\alpha}_1 - i^\alpha_{2,0}  . \nonumber
\end{align}

The two cases we are interested in are the recurrent two-sided symmetric case, where $\chi =\frac{1}{2}$ (i.e., $\cS = \Ss$) with $\alpha > 1$,
and the recurrent one-sided antisymmetric case, where $\chi=1$ (i.e., $\cS = \So$) with $\alpha > \frac{1}{2}$.

\begin{lemma}
\label{lem:C-properties}
Let $\flat \in \{ {\rm sym}, {\rm one} \}$ and $\chi \alpha \in (\frac{1}{2},1)$.
The function $\nu \mapsto C^\flat (\nu,\alpha)$ is continuous for $\nu \in [0, \alpha)$ with $C (0,\alpha) =0$
and $\lim_{\nu \uparrow \alpha} C^\flat (\nu, \alpha) = \infty$.
There exists $\nu^\flat_0 = \nu^\flat_0 (\alpha) \in (0,\alpha)$ such that $C^\flat (\nu ,\alpha) < 0$ for $\alpha \in (0, \nu^\flat_0)$,
$C^\flat (\nu^\flat_0, \alpha) = 0$, and $C^\flat (\nu ,\alpha) > 0$ for $\alpha \in (\nu^\flat_0 , \alpha)$
\end{lemma}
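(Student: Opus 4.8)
The plan is to reduce all four assertions to the \emph{convexity in $\nu$} of $C^\flat(\cdot,\alpha)$ on $[0,\alpha)$.

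First I would dispose of the easy facts. From the definitions in Lemma~\ref{lem:Df}, $C^\flat(\nu,\alpha) = i_{2,1}^{\nu,\alpha}\bigl(1+R^\flat(\alpha,\nu)\bigr)$. At $\nu=0$ the integrands of $i_0^{\nu,\alpha}$, $i_1^{\nu,\alpha}$, $\tilde i_1^{\nu,\alpha}$ all vanish and $i_{2,1}^{0,\alpha} = 1/\alpha = i_{2,0}^\alpha$, so the formulas for $C^{\rm sym}$ and $C^{\rm one}$ give $C^\flat(0,\alpha)=0$. For the right endpoint, $i_{2,1}^{\nu,\alpha}=\Gamma(1+\nu)\Gamma(\alpha-\nu)/\Gamma(1+\alpha)\to+\infty$ as $\nu\uparrow\alpha$ (because $\Gamma(\alpha-\nu)\to\Gamma(0^+)=+\infty$), while $i_{2,0}^\alpha$ is $\nu$-free, $i_0^{\nu,\alpha}\ge 0$, and — using the closed forms of Lemma~\ref{lem:i-integrals}, which is where the hypotheses $\alpha<2$ (symmetric) and $\alpha<1$ (one-sided) enter — $i_1^{\nu,\alpha}$ and $\tilde i_1^{\nu,\alpha}$ stay bounded as $\nu\uparrow\alpha$; hence $C^{\rm one}(\nu,\alpha)\to+\infty$ and $C^{\rm sym}(\nu,\alpha)\ge i_{2,1}^{\nu,\alpha}-i_{2,0}^\alpha+\inf_{[0,\alpha)}i_1^{\nu,\alpha}\to+\infty$. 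Continuity on $[0,\alpha)$ I would take either from these same closed forms or (without relying on Lemma~\ref{lem:i-integrals}) from the convexity below, since a finite convex function on an open interval is continuous and $0$ is interior to the domains of all four integrals.

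Next, the crux: I would show each of $i_{2,1}^{\nu,\alpha}$, $i_0^{\nu,\alpha}$, $i_1^{\nu,\alpha}$, $\tilde i_1^{\nu,\alpha}$ is convex in $\nu$ on $[0,\alpha)$ — each is the integral against a positive measure of expressions built from terms $\nu\mapsto a^\nu$ ($a>0$), and $\nu\mapsto a^\nu$ is convex — and in fact $i_{2,1}^{\nu,\alpha}$ is \emph{strictly} convex, since differentiating twice under the integral sign (justified as in the proof of Lemma~\ref{lem:Df}) gives $\partial_\nu^2 i_{2,1}^{\nu,\alpha}=\int_1^\infty(\log(u-1))^2(u-1)^\nu u^{-1-\alpha}\,\ud u>0$. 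As $i_{2,0}^\alpha$ is $\nu$-free, it follows that $C^{\rm sym}(\cdot,\alpha)$ and $C^{\rm one}(\cdot,\alpha)$ are strictly convex on $[0,\alpha)$. I would also note that the $\nu$-derivative at $0$, computed exactly as in Lemma~\ref{lem:Df} via \eqref{eq34}, equals $\tfrac{\pi}{\alpha}\cot(\chi\pi\alpha)$, which is strictly negative because $\chi\alpha\in(\tfrac{1}{2},1)$ puts $\chi\pi\alpha$ in $(\tfrac{\pi}{2},\pi)$.

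Finally I would assemble: $C^\flat(\cdot,\alpha)$ is strictly convex on $[0,\alpha)$, vanishes at $0$, has strictly negative slope at $0$ (so is $<0$ on some $(0,\eps)$), and $\to+\infty$ at the right end; so it has a zero in $(0,\alpha)$, and at most one — a strictly convex function has at most two zeros, and $0$ is one of them. Writing $\nu_0^\flat$ for this zero, strict convexity puts the graph on $[0,\nu_0^\flat]$ strictly below the chord through the two zeros, which lies at height $0$, giving $C^\flat(\nu,\alpha)<0$ on $(0,\nu_0^\flat)$; and $C^\flat(\nu,\alpha)>0$ on $(\nu_0^\flat,\alpha)$, since otherwise, with $C^\flat\to+\infty$, there would be a third zero. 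The hard part will be checking the uniform boundedness of $i_1^{\nu,\alpha}$ and $\tilde i_1^{\nu,\alpha}$ as $\nu\uparrow\alpha$, so that they cannot offset the blow-up of $i_{2,1}^{\nu,\alpha}$; everything else is routine. As a remark, in the one-sided case convexity can be bypassed entirely: the gamma-function formulas for $i_{2,1}$ and $\tilde i_1$ together with the reflection formula express $C^{\rm one}(\nu,\alpha)$ as a manifestly positive factor times $\sin(\pi\alpha)-\sin(\pi(\alpha-\nu))$, from which the sign pattern and the explicit zero $\nu_0^{\rm one}=2\alpha-1$ are immediate.
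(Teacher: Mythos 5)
Your proposal is correct, and the key step is handled by a genuinely different (and arguably cleaner) mechanism than the paper's. Where you get uniqueness of the positive zero from \emph{strict convexity in $\nu$} of the integrals $i_{2,1}^{\nu,\alpha}$, $i_0^{\nu,\alpha}$, $i_1^{\nu,\alpha}$, $\tilde i_1^{\nu,\alpha}$ (each being an integral of $\nu\mapsto a^\nu$ terms against a positive weight, with $i_{2,1}$ strictly convex), the paper instead argues that $\nu\mapsto C(\nu,\alpha)$ is a non-constant analytic function whose zeros cannot accumulate, and then uses a probabilistic Jensen's-inequality comparison of $Df_{\nu_1}$ and $Df_{\nu_2}$ through the process itself to derive $C(\nu_2,\alpha)\geq\frac{\nu_2}{\nu_1}C(\nu_1,\alpha)$ for $0<\nu_1\leq\nu_2<\alpha$, i.e.\ that $C(\nu,\alpha)/\nu$ is non-decreasing; this monotonicity forces the smallest and largest zeros in $(0,\alpha)$ to coincide. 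Your convexity statement is strictly stronger than the paper's $\nu\mapsto C(\nu,\alpha)/\nu$ monotonicity (convexity plus $C(0,\alpha)=0$ implies it), it is purely analytic and self-contained, and it simultaneously delivers the continuity claim for free. The one point you flag as ``hard'' --- uniform boundedness of $i_1^{\nu,\alpha}$ and $\tilde i_1^{\nu,\alpha}$ as $\nu\uparrow\alpha$ --- is in fact routine: for $\nu\in[0,\alpha]$ the numerators are $O(u^2)$ near $u=0$ (Taylor, uniformly in $\nu$) and bounded near $u=1$, so the integrals are uniformly bounded given $\alpha<2$ (resp.\ $\alpha<1$); alternatively the paper's route for the sym case just uses $i_1^{\nu,\alpha}\geq 0$ once $\nu\geq 1$, which suffices since $\alpha>1$ there. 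Your closing remark on the one-sided case (the reflection-formula factorisation exhibiting $\sin(\pi\alpha)-\sin(\pi(\alpha-\nu))$) is also correct and short-circuits both this lemma and Lemma~\ref{lem:nu0} in that case.
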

\begin{proof}
We give the proof only in the case $\flat = {\rm sym}$; the other case is very similar. Thus $\chi = \frac{1}{2}$, and,
for ease of notation, we write just $C$ instead of $C^\flat$.

Clearly $C ( 0 ,\alpha ) = 0$.
For $\nu \geq 1$, convexity of the function $z \mapsto z^\nu$ on $\RP$ shows that $(1+u)^\nu + (1-u)^\nu - 2 \geq 0$ for all $u \in [0,1]$,
so that $i_1^{\nu, \alpha} \geq 0$; clearly, $i^{\nu, \alpha}_{2,1}$ and
$i_0^{\nu, \alpha}$ are also  non-negative.
 Hence,  by the expression for $i_{2,1}^{\nu,\alpha}$ in Lemma~\ref{lem:i-integrals},
\[ \liminf_{\nu \uparrow \alpha} C (\nu ,\alpha) \geq - \frac{1}{\alpha} + \liminf_{\nu \uparrow \alpha} \frac{\Gamma (\nu +1) \Gamma (\alpha - \nu)}{\Gamma (1+\alpha)} ,\]
which is $+\infty$. Moreover, by Lemma~\ref{lem:j-integrals} and the subsequent remark,
\[ \left. \frac{\partial}{\partial \nu} C (\nu, \alpha ) \right|_{\nu =0} = 
\frac{1}{\alpha} \left( \psi (1 -\tfrac{\alpha}{2} ) - \psi ( \tfrac{\alpha}{2} ) \right) = \frac{\pi}{\alpha}
\cot ( \tfrac{\pi \alpha}{2} ) ,\]
which is negative for $\alpha \in (1,2)$. Hence $C (\nu , \alpha ) <0$ for $\nu > 0$ small enough.

Since $\nu \mapsto C (\nu, \alpha)$ is a non-constant analytic function on $[0,\alpha)$,
its zeros can accumulate only at $\alpha$; but this is ruled out by the fact that $C (\nu ,\alpha) \to \infty$
as $\nu \to \alpha$. Hence $C ( \, \cdot \, ,\alpha)$ has only finitely many zeros in $[0,\alpha)$;
one is at $0$, and there must be at least one zero in $ (0 , \alpha)$, by Rolle's theorem.
Define $\nu_-  := \nu_- (\alpha)$ and $\nu_+ := \nu_+ (\alpha)$
to be the smallest and largest such zeros, respectively.

Suppose $0 < \nu_1 \leq \nu_2 < \alpha$.
By Jensen's inequality, $\Exp_{x,i} [ (1 + X_{1} )^{\nu_2} ]    \geq \left( \Exp_{x,i} [ (1+ X_1 )^{\nu_1} ] \right)^{\nu_2/\nu_1}$. 
Hence
\begin{align*} (1+ x)^{\nu_2} + D f_{\nu_2} ( x, i) & \geq \left( (1+x)^{\nu_1} + D f_{\nu_1} (x, i) \right)^{\nu_2/\nu_1} \\
& = (1+x)^{\nu_2} \left( 1 + (1+x)^{-\nu_1} D f_{\nu_1} (x, i) \right)^{\nu_2/\nu_1} \\
& = (1+x)^{\nu_2} + \frac{\nu_2}{\nu_1}   x^{\nu_2-\nu_1} D f_{\nu_1} (x, i) + o (x^{\nu_2 - \alpha}), \end{align*}
using Taylor's theorem and the fact that $D f_{\nu_1} (x, i) = O (x^{\nu_1 -\alpha} )$, by~\eqref{eq:D-C}.
By another application of~\eqref{eq:D-C}, it follows that 
\[   \chi c_i x^{\nu_2 - \alpha} C (\nu_2, \alpha )  + o (x^{\nu_2 - \alpha})
\geq   \frac{\nu_2}{\nu_1} \chi c_i x^{\nu_2 - \alpha} C (\nu_1, \alpha) + o (x^{\nu_2 - \alpha}) .\]
Multiplying by $x^{\alpha-\nu_2}$ and taking $x \to \infty$, we obtain
\[ C (\nu_2, \alpha ) \geq \frac{\nu_2}{\nu_1} C (\nu_1, \alpha) , \text{ for } 0 < \nu_1 \leq \nu_2 < \alpha.\]
In particular, (i) if 
$C ( \nu_1 ,\alpha ) \geq 0$ then $C ( \nu , \alpha) \geq 0$ for all $\nu \geq \nu_1 >0$; and (ii) if
$C( \nu_1, \alpha) > 0$ and $\nu_2 > \nu_1$, we have $C (\nu_2, \alpha ) > C (\nu_1, \alpha)$.
It follows from these two observations  that $C ( \nu , \alpha) = 0$ for $\nu \in [ \nu_- , \nu_+ ]$, which is not possible
unless $\nu_- = \nu_+$. Hence there is exactly one zero of $C ( \, \cdot \, ,\alpha)$ in $(0,\alpha)$; call it $\nu_0 (\alpha)$.
\end{proof}

\begin{lemma}
\label{lem:nu0}
The positive zero of $C^\flat ( \, \cdot \, , \alpha)$ described in Lemma~\ref{lem:C-properties}
is given by $v^{\rm one}_0 ( \alpha ) = 2 \alpha -1$ or
$v^{\rm sym}_0 (\alpha ) = \alpha -1$.
\end{lemma}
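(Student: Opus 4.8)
The plan is to verify that the value claimed in each case is precisely the (unique, by Lemma~\ref{lem:C-properties}) zero of $C^\flat(\,\cdot\,,\alpha)$ in $(0,\alpha)$. Thus it suffices to check the two arithmetic membership statements $2\alpha-1\in(0,\alpha)$ for $\alpha\in(\tfrac12,1)$ (the range in which Lemma~\ref{lem:C-properties} applies with $\flat={\rm one}$, since then $\chi\alpha=\alpha$) and $\alpha-1\in(0,\alpha)$ for $\alpha\in(1,2)$ (the range with $\flat={\rm sym}$, since then $\chi\alpha=\alpha/2$), together with the two vanishing identities $C^{\rm one}(2\alpha-1,\alpha)=0$ and $C^{\rm sym}(\alpha-1,\alpha)=0$.

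The one-sided identity is a short manipulation of the closed forms in Lemma~\ref{lem:i-integrals}. Substituting $i_{2,1}^{\nu,\alpha}=\Gamma(1+\nu)\Gamma(\alpha-\nu)/\Gamma(1+\alpha)$, $\tilde i_1^{\nu,\alpha}=\tfrac1\alpha\bigl(1-\Gamma(1+\nu)\Gamma(1-\alpha)/\Gamma(1-\alpha+\nu)\bigr)$ and $i_{2,0}^\alpha=\tfrac1\alpha$ into $C^{\rm one}(\nu,\alpha)=i_{2,1}^{\nu,\alpha}+\tilde i_1^{\nu,\alpha}-i_{2,0}^\alpha$, the $\tfrac1\alpha$ terms cancel and, using $\Gamma(1+\alpha)=\alpha\Gamma(\alpha)$,
\[ C^{\rm one}(\nu,\alpha)=\frac{\Gamma(1+\nu)}{\alpha}\left(\frac{\Gamma(\alpha-\nu)}{\Gamma(\alpha)}-\frac{\Gamma(1-\alpha)}{\Gamma(1+\nu-\alpha)}\right). \]
Hence $C^{\rm one}(\nu,\alpha)=0$ iff $\Gamma(\alpha-\nu)\Gamma\bigl(1-(\alpha-\nu)\bigr)=\Gamma(\alpha)\Gamma(1-\alpha)$, which by the reflection formula $\Gamma(z)\Gamma(1-z)=\pi/\sin(\pi z)$ is $\sin(\pi(\alpha-\nu))=\sin(\pi\alpha)$. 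For $\nu\in(0,\alpha)$ and $\alpha\in(\tfrac12,1)$ both angles lie in $(0,\pi)$, so this forces $\nu=0$ or $\alpha-\nu=1-\alpha$; the positive solution is $\nu=2\alpha-1$, which lies in $(0,\alpha)$, giving the one-sided claim.

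For the symmetric identity, first note $i_{2,1}^{\alpha-1,\alpha}=\Gamma(\alpha)\Gamma(1)/\Gamma(1+\alpha)=\tfrac1\alpha=i_{2,0}^\alpha$, so $C^{\rm sym}(\alpha-1,\alpha)=i_0^{\alpha-1,\alpha}+i_1^{\alpha-1,\alpha}$ and it remains to show these cancel. At $\nu=\alpha-1$ we have $\alpha-\nu=1$, and using $\HG(a,1;2;z)=\bigl((1-z)^{1-a}-1\bigr)/\bigl((a-1)z\bigr)$ the formula in Lemma~\ref{lem:i-integrals} gives $i_0^{\alpha-1,\alpha}=\HG(1-\alpha,1;2;-1)-\tfrac1\alpha=\tfrac1\alpha(2^\alpha-1)-\tfrac1\alpha=\tfrac1\alpha(2^\alpha-2)$. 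For $i_1$, at $\nu=\alpha-1$ the parameter $2-\tfrac\alpha2$ appears among both the numerator and denominator parameters of the $\HGG$ in Lemma~\ref{lem:i-integrals} and cancels, and the prefactor simplifies to $1-\alpha$, leaving $i_1^{\alpha-1,\alpha}=(1-\alpha)\,{}_3F_2\bigl(1,\tfrac{3-\alpha}{2},1-\tfrac\alpha2;\tfrac32,2;1\bigr)$. Writing $1/(n+1)=\int_0^1 t^n\,\ud t$ turns the unit-argument ${}_3F_2$ into $\int_0^1\HG\bigl(\tfrac{3-\alpha}{2},1-\tfrac\alpha2;\tfrac32;t\bigr)\,\ud t$; the Euler transformation gives $\HG\bigl(\tfrac{3-\alpha}{2},1-\tfrac\alpha2;\tfrac32;t\bigr)=(1-t)^{\alpha-1}\HG\bigl(\tfrac\alpha2,\tfrac\alpha2+\tfrac12;\tfrac32;t\bigr)$, then the quadratic transformation $\HG(a,a+\tfrac12;\tfrac32;z^2)=\bigl((1+z)^{1-2a}-(1-z)^{1-2a}\bigr)/\bigl(2z(1-2a)\bigr)$ with $a=\tfrac\alpha2$, $z=\sqrt t$, and the substitution $s=\sqrt t$ collapse the integral to $\tfrac1{1-\alpha}\int_0^1\bigl((1-s)^{\alpha-1}-(1+s)^{\alpha-1}\bigr)\,\ud s=\tfrac{1}{\alpha(1-\alpha)}(2-2^\alpha)$. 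Hence $i_1^{\alpha-1,\alpha}=\tfrac1\alpha(2-2^\alpha)=-i_0^{\alpha-1,\alpha}$, so $C^{\rm sym}(\alpha-1,\alpha)=0$ and $\alpha-1\in(0,\alpha)$, giving the symmetric claim.

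The expected main obstacle is the symmetric case: $i_1^{\nu,\alpha}$ has no elementary closed form at general $\nu$, so the cancellation really relies on the confluence of special-function identities (the ${}_4F_3\to{}_3F_2$ degeneration, the Euler integral representation of the ${}_3F_2$, and the Euler and quadratic transformations of the ${}_2F_1$), and each step must be justified on the relevant parameter ranges $\nu\in(0,\alpha)$, $\alpha\in(1,2)$ (in particular the pointwise validity of the quadratic transformation on $[0,1)$ and monotone convergence for the term-by-term integration near $t=1$, where the intermediate hypergeometric series diverges but the prefactor $(1-t)^{\alpha-1}$ compensates). An alternative route, bypassing the hypergeometric algebra, is to observe that $x^{\nu-\alpha}C^{\rm sym}(\nu,\alpha)$ equals, up to a fixed nonzero constant, the one-dimensional fractional Laplacian $(-\Delta)^{\alpha/2}$ applied to $x\mapsto|x|^\nu$ (this follows by combining the integrals in $C^{\rm sym}$ into $\tfrac12\int_\R\bigl(|1+z|^\nu+|1-z|^\nu-2\bigr)|z|^{-1-\alpha}\,\ud z$ and scaling), and this vanishes for $x\neq0$ when $\nu=\alpha-1$ because $|x|^{\alpha-1}$ is the $\alpha$-harmonic Riesz kernel in dimension one; this yields $C^{\rm sym}(\alpha-1,\alpha)=0$ at once.
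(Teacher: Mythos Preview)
Your proof is correct. The one-sided case is essentially the paper's argument with a small extra flourish: you solve $\sin(\pi(\alpha-\nu))=\sin(\pi\alpha)$ rather than just plugging in $\nu=2\alpha-1$ and verifying.

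For the symmetric case you take a genuinely different route. The paper abandons the hypergeometric closed forms entirely and returns to the integral definitions: the substitution $z=1/u$ in $i_0^{\alpha-1,\alpha}=\int_1^\infty\bigl((1+u)^{\alpha-1}-1\bigr)u^{-1-\alpha}\,\ud u$ gives $\int_0^1\bigl((1+z)^{\alpha-1}-z^{\alpha-1}\bigr)\,\ud z=(2^\alpha-2)/\alpha$ by elementary antidifferentiation, and the same substitution in $i_1^{\alpha-1,\alpha}$ gives $\int_1^\infty\bigl((z+1)^{\alpha-1}+(z-1)^{\alpha-1}-2z^{\alpha-1}\bigr)\,\ud z$, which evaluates to $(2-2^\alpha)/\alpha$ after a short limiting argument. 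This is considerably shorter and avoids the chain of $\HGG\to{}_3F_2$ degeneration, Euler integral, Euler transformation, and quadratic transformation that you assemble; it also sidesteps the convergence and interchange issues near $t=1$ that you correctly flag as needing care. Your approach, on the other hand, shows that the identity is not an accident but sits inside the standard hypergeometric calculus, and your alternative fractional-Laplacian remark (that $|x|^{\alpha-1}$ is $\alpha$-harmonic in one dimension) gives a conceptual reason for the symmetric zero that the paper's computation does not.
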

\begin{proof}
First suppose $\flat = {\rm one}$. Then from Lemma~\ref{lem:i-integrals} we verify that for $\alpha \in ( \frac{1}{2}, 1 )$,
\[ C^{\rm one} ( 2 \alpha -1 , \alpha) = \frac{ \Gamma (2\alpha ) \Gamma ( 1 - \alpha)}{\Gamma (1 + \alpha) } - \frac{\Gamma (2\alpha) \Gamma ( 1-\alpha)}{\alpha \Gamma (\alpha ) } = 0 ,\]
since $\alpha \Gamma (\alpha ) = \Gamma (1 +\alpha)$.

Now suppose that $\flat = {\rm sym}$ and $\alpha \in (1,2)$. To verify $C^{\rm sym} (\alpha -1 ,\alpha) = 0$
 it is simpler to work with the integral representations directly, rather than the hypergeometric functions.
After the substitution $z = 1/u$, we have
\[ i^{\alpha -1,\alpha}_0 = \int_0^1 \left( ( 1 + z)^{\alpha -1} - z^{\alpha -1} \right) \ud z = \frac{2^\alpha -2}{\alpha} .\]
Similarly, after the same substitution, 
\[ i_1^{\alpha-1 , \alpha} = \int_1^\infty \left( (z+1)^{\alpha -1} + (z-1)^{\alpha -1} - 2 z^{\alpha -1} \right) \ud z, \]
which we may evaluate as
\begin{align*} i_1^{\alpha-1, \alpha} & = \lim_{y\to \infty} \int_1^y\left( (z+1)^{\alpha -1} + (z-1)^{\alpha -1} - 2 z^{\alpha -1} \right) \ud z \\
& = \frac{2-2^\alpha}{\alpha} + \frac{1}{\alpha} \lim_{y \to \infty} y^\alpha \left( (1+y^{-1} )^\alpha + (1 -y^{-1} )^\alpha - 2  \right) \\
& = \frac{2-2^\alpha}{\alpha} .\end{align*}
Finally we have that  $i^{\alpha-1, \alpha}_{2,1} - i^\alpha_{2,0} = \frac{ \Gamma (\alpha)}{\Gamma (1+\alpha)} - \frac{1}{\alpha} =0$, 
so altogether we verify that $C^{\rm sym} (\alpha -1 ,\alpha ) = 0$.
\end{proof}

We can now complete the proofs of Theorems~\ref{thm:symmetric_random_walk_moments} and~\ref{thm:one-sided_moments}.

\begin{proof}[Proof of Theorem~\ref{thm:symmetric_random_walk_moments}.]
Let $Y_n = f_\nu (X_n, \xi_n )$. 
First suppose that $\alpha \in (1,2)$. Then we have from~\eqref{eq:D-C} together
with Lemmas~\ref{lem:C-properties} and~\ref{lem:nu0} that,
for any $\nu \in (0, \alpha - 1)$,
\[ \Exp [ Y_{n+1} - Y_n \mid \cF_n ] \leq - \eps Y_n^{1-(\alpha/\nu)} , \text{ on } \{ Y_n \geq y_0 \} , \]
for some $\eps>0$ and $y_0 \in \RP$. It follows from Lemma~\ref{lem:AIM} that
$\Exp [ \sigma^p ] < \infty$ for $p < \nu / \alpha$ and since $\nu < \alpha - 1$ was arbitrary we get
$\Exp [ \sigma^p ] < \infty$ for $p < 1 - (1/\alpha)$.

For the non-existence of moments when $\alpha \in (1,2)$, we will apply Lemma~\ref{lem:AIM2} with $Y_n = f_\nu (X_n , \xi_n) = (1+X_n)^\nu$
for some $\nu \in (0,\alpha)$. Then condition~\eqref{eq:AIM2a} follows from~\eqref{eq:D-C}, which also shows that
for $r \in (1, \alpha / \nu)$,
\begin{align*} \Exp [ Y_{n+1}^r - Y_n^r \mid (X_n, \xi_n) = (x,i) ]  
= \frac{c_i}{2} x^{r \nu - \alpha} C^{\rm sym} (r \nu, \alpha ) + o (x^{r \nu -\alpha} )  , \end{align*}
so that  $ \Exp [ Y_{n+1}^r - Y_n^r \mid  \cF_n ] \leq c_i C^{\rm sym} (r \nu, \alpha ) Y_n^{r - (\alpha / \nu)}$,
for all $Y_n$ sufficiently large. Since $\alpha / \nu > 1$ condition~\eqref{eq:AIM2b} follows. Finally,
we may choose $\nu < \alpha$ close enough to $\alpha$ and then
take $\gamma \in (\alpha -1, \nu)$ so that from~\eqref{eq:D-C},
with Lemmas~\ref{lem:C-properties} and~\ref{lem:nu0},
$\Exp [ Y_{n+1}^{\gamma/\nu} - Y_n^{\gamma/\nu} \mid \cF_n ] \geq 0$,
for all $Y_n$ sufficiently large. Thus we may apply Lemma~\ref{lem:AIM2}
to obtain $\Exp [ \sigma^p ] = \infty$ for $p > \gamma / \nu$,
and taking $\gamma$ close to $\alpha-1$ and $\nu$ close to $\alpha$
we can achieve any $p > 1 - (1/\alpha)$, as claimed.

Next suppose that $\alpha \geq 2$.   A similar argument to before but this time using Lemmas~\ref{lem:all-integrals} and~\ref{lem:big-alpha}
shows that for any $\nu \in (0,1)$,
\[ \Exp [ Y_{n+1} - Y_n \mid \cF_n ] \leq - \eps Y_n^{1-(2/\nu)} , \text{ on } \{ Y_n \geq y_0 \} , \]
for some $\eps>0$ and $y_0 \in \RP$. Lemma~\ref{lem:AIM} then shows that
$\Exp [ \sigma^p ] < \infty$ for $p < \nu / 2$ and since $\nu \in (0,1)$ was arbitrary we get
$\Exp [ \sigma^p ] < \infty$ for $p < 1/2$.

We sketch the argument for $\Exp [ \sigma^{p} ] = \infty$ when $p > 1/2$. 
For $\nu \in (1,\alpha)$, it is
not hard to show that $D f_\nu (x,i) \geq 0$ for all $x$ sufficiently large,
and we may verify the other conditions of Lemma~\ref{lem:AIM2} to show that 
$\Exp [ \sigma^p ] = \infty$ for $p > 1/2$.
\end{proof}

\begin{proof}[Proof of Theorem~\ref{thm:one-sided_moments}.]
Most of the proof is similar to that of Theorem~\ref{thm:symmetric_random_walk_moments}, so we omit the details.
The case where a different argument is required is the non-existence part of the case $\alpha \geq 1$.
We have that for some $\eps >0$ and all $y$ sufficiently large, 
$\Pr_{x,i} [ X_1 \geq y ] \geq \eps (x + y)^{-\alpha}$. A similar argument to Lemma~\ref{lem:big-alpha} shows that for any $\nu \in (0,1)$,
for some $C \in \RP$, $\Exp [ X_{n+1}^\nu - X_n^\nu \mid X_n = x ] \geq - C$.
Then a suitable maximal inequality implies that with probability at least $1/2$,
started from $X_1 \geq y$ it takes at least $c y^{\nu}$ steps for $X_n$
to return to a neighbourhood of $0$, for some $c>0$. Combining the two estimates gives
\[ \Pr_{x,i} [ \sigma \geq c y^{\nu} ] \geq \frac{1}{2}(x + y)^{-\alpha} ,\]
which implies $\Exp [ \sigma^p ] =\infty$ for $p \geq \alpha / \nu$, and since $\nu \in (0,1)$
was arbitrary we can achieve any $p > \alpha$.
\end{proof}

\section{Recurrence classification in the critical cases}
\label{sec:critical}

\subsection{Logarithmic Lyapunov functions}

In this section we prove Theorem~\ref{thm:recurrence}(b)(iii).
Throughout this section we write $a_k :=   \cot ( \chi_k \pi \alpha_k )$ and suppose
that $\max_{k \in \cS} \chi_k \alpha_k < 1$, that $\sum_{k \in \cS} \mu_k a_k = 0$, and that 
  $v_i \in \fD^+_{\alpha_i,c_i}$ for all $i \in \cS$, that is,
for $ y >0$,
$v_i ( y ) = c_i (y) y^{-1-\alpha_i}$, with $\alpha_i \in (0,\infty)$
and   $c_i (y) = c_i + O (y ^{-\delta} )$,
where $\delta >0$ may be chosen so as not to depend upon $i$.

To prove recurrence in the critical cases, we need a function that grows
more slowly than any power; now the weights $\lambda_k$ are
additive rather than multiplicative. For $x \in \R$ write $g(x) := \log ( 1 + |x| )$.
Then, for $x \in \RP$ and $k \in \cS$, define 
\begin{equation}
\label{gdef}
 g (x, k) := g (x) + \lambda_k =  \log (1 + |x| ) + \lambda_k , \end{equation}
where $\lambda_k >0$ for all $k \in \cS$.
Also write
\[ h (x , k ) := \left( g (x , k) \right)^{1/2} = \left( \log ( 1 +|x| ) + \lambda_k \right)^{1/2} .\]
Lemma~\ref{lem:finding-theta} shows that there exist $\lambda_k > 0$ ($k \in \cS$) such that
\begin{equation}
\label{eq:crit-a}
 a_k +  \sum_{j \in \cS} p (i, j) ( \lambda_j - \lambda_i ) = 0 ;
\end{equation}
we fix such a choice of the $\lambda_k$ from now on.

We prove recurrence by establishing the following result.

\begin{lemma}
\label{lem:h-lem}
Suppose that the conditions of Theorem~\ref{thm:recurrence}(b)(iii) hold, and that $(\lambda_k ; k \in \cS)$
are such that~\eqref{eq:crit-a} holds. Then there exists $x_0 \in \RP$ such that
\[ \Exp [ h (X_{n+1}, \xi_{n+1} ) - h (X_n, \xi_n ) \mid X_n = x, \xi_n = i ] \leq 0 , ~\text{for} ~ x \geq x_0 \text{ and all } i \in \cS .\]
\end{lemma}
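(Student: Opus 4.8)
Here is a plan for proving Lemma~\ref{lem:h-lem}. The idea is to work directly with the one-step increment, using that $h=\sqrt g$ grows so slowly that the negative second-order (concavity) correction in passing from $g$ to $\sqrt g$ beats the first-order drift of $g$. Fix $i\in\cS$, and under $\Pr_{x,i}$ write $g_x:=g(x,i)=\log(1+x)+\lambda_i$ and $\Delta:=g(X_1,\xi_1)-g(x,i)$, so that $g_x+\Delta=g(X_1,\xi_1)\ge\min_k\lambda_k>0$ always and $\Exp_{x,i}[\,|\Delta|+\Delta^2\,]<\infty$ since $X_1$ has heavy tails of positive exponent. The first, and main, step is to control $Dg(x,i)=\Exp_{x,i}[\Delta]$. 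Substituting $\log(1+\cdot)$ for $(1+\cdot)^\nu$ in~\eqref{conditional-increment-sym} and~\eqref{conditional-increment-one} (equivalently, differentiating the formula of Lemma~\ref{lem:Df} in $\nu$ at $\nu=0$, where the $\log x$-proportional term disappears because $g$ grows only logarithmically), and carrying out the integral estimates exactly as in Lemma~\ref{lem:all-integrals} but now with the logarithmic integrals of Lemma~\ref{lem:j-integrals} and with the sharpened error terms available under $v_i\in\fD^+_{\alpha_i,c_i}$, one gets
\[
  Dg(x,i)=\frac{\chi_i c_i}{\alpha_i}\,x^{-\alpha_i}\Bigl((P\bla)_i-\lambda_i+\pi\cot(\chi_i\pi\alpha_i)\Bigr)+O\bigl(x^{-\alpha_i-\delta}\log x\bigr),\qquad x\to\infty,
\]
where $\delta>0$ is the uniform exponent from the $\fD^+$ hypothesis and the digamma reflection formula identifies the transcendental constant, just as in Lemma~\ref{lem:Df}. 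Our choice of the $\lambda_k$ via Lemma~\ref{lem:finding-theta} — legitimate precisely because the critical hypothesis $\sum_k\mu_k a_k=0$ is the solvability condition — makes~\eqref{eq:crit-a} hold, and this is exactly what annihilates the $x^{-\alpha_i}$ coefficient, leaving $Dg(x,i)=O(x^{-\alpha_i-\delta}\log x)$.

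Next I would establish a matching lower bound on the second moment of $\Delta$ on a convenient event. Let $A_x$ be the event that the first increment satisfies $\varphi_1\in(-x/2,-x/4)$; this is a non-crossing step, so on $A_x$ we have $\xi_1=i$ and $X_1=x+\varphi_1\in(x/2,3x/4)$, whence $\Delta=\log\frac{1+X_1}{1+x}$ is negative and, uniformly in all large $x$, bounded away from $0$, so $\Delta^2\ge\kappa$ on $A_x$ for some $\kappa>0$ and $x\ge x_1$. A routine tail estimate from~\eqref{ass:tails} gives $\Pr_{x,i}[A_x]\ge c\,x^{-\alpha_i}$ for some $c>0$; moreover $A_x\subseteq E:=\{g(X_1,\xi_1)\le 2g_x\}$, since on $A_x$ one has $g(X_1,\xi_1)=\log(1+X_1)+\lambda_i<g_x$. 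Hence $\Exp_{x,i}[\Delta^2\,\mathbf 1_E]\ge\Exp_{x,i}[\Delta^2\,\mathbf 1_{A_x}]\ge\kappa c\,x^{-\alpha_i}$ for $x\ge x_1$.

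Then I would pass from $g$ to $h=g^{1/2}$ using Taylor's theorem with Lagrange remainder: for $a>0$ and $t\ge-a$,
\[
  \sqrt{a+t}\le\sqrt a+\frac{t}{2\sqrt a}-\frac{t^2}{8\,(\max(a,a+t))^{3/2}} .
\]
Applying this with $a=g_x$, $t=\Delta$ on the event $E$ (where $\max(g_x,g_x+\Delta)\le 2g_x$), and on $E^c$ using only the weaker concavity bound $\sqrt{g_x+\Delta}-\sqrt{g_x}\le\Delta/(2\sqrt{g_x})$, and then taking $\Exp_{x,i}$, the linear terms recombine into $Dg(x,i)$ and one obtains
\[
  Dh(x,i)\le\frac{Dg(x,i)}{2\sqrt{g_x}}-\frac{\Exp_{x,i}[\Delta^2\,\mathbf 1_E]}{8\,(2g_x)^{3/2}} .
\]
Since $g_x=\log(1+x)+\lambda_i$ obeys $\sqrt{\log x}\le\sqrt{g_x}$ and $g_x\le 2\log x$ for $x$ large, plugging in the two estimates above gives, for all large $x$,
\[
  Dh(x,i)\le C'\,x^{-\alpha_i-\delta}\sqrt{\log x}-c'\,\frac{x^{-\alpha_i}}{(\log x)^{3/2}}<0,
\]
because $x^{-\delta}(\log x)^2\to0$. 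Taking $x_0$ to be the (finite) maximum over $i\in\cS$ of the resulting thresholds would complete the proof.

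The step I expect to be the main obstacle is the drift estimate $Dg(x,i)=O(x^{-\alpha_i-\delta}\log x)$: one needs a genuine power saving $x^{-\delta}$ in the error, not merely $o(x^{-\alpha_i})$. This is exactly where — and why — the strengthened hypothesis $v_i\in\fD^+_{\alpha_i,c_i}$ of Theorem~\ref{thm:recurrence}(b)(iii) is essential: a bare $o(x^{-\alpha_i})$ error would not beat the $x^{-\alpha_i}(\log x)^{-3/2}$ negative contribution produced by the concavity correction in the last display. The remaining ingredients — the second-moment lower bound and the elementary Taylor/concavity estimate — are comparatively soft.
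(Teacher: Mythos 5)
Your proposal is correct, and while the first half coincides with the paper's route, the key second step is genuinely different. Like the paper, you first establish the critical drift cancellation $Dg(x,i)=O(x^{-\alpha_i-\delta}\log x)$ using the $\fD^+$ hypothesis and the choice of $\lambda_k$ from \eqref{eq:crit-a}; this is exactly the content of Lemma~\ref{lem:G-estimate} and \eqref{eq:g-critical}, and you correctly identify it as the one place where the power-saving error (rather than a bare $o(x^{-\alpha_i})$) is indispensable. Where you diverge is in passing from $g$ to $h=g^{1/2}$. The paper's Lemma~\ref{lem:h-vs-g} compares the integrals for $Dh$ and $Dg$ term by term via the identity $h(z,j)-h(x,i)=(g(z,j)-g(x,i))/(h(z,j)+h(x,i))$, and extracts the crucial negative gain of order $x^{-\alpha_i}(\log x)^{-3/2}$ from the \emph{crossing} integral over large jumps $y\geq Kx$, where the denominator exceeds $2h(x,i)$ by roughly $(\log x)^{-1/2}$. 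You instead obtain the gain from the second-order Lagrange remainder in the Taylor expansion of $\sqrt{\cdot}$, localizing the second moment of $\Delta=g(X_1,\xi_1)-g(x,i)$ on the event of a moderate \emph{non-crossing} jump into $(x/2,3x/4)$, which has probability of order $x^{-\alpha_i}$ and on which $\Delta^2$ is bounded below. Both produce the same $-c\,x^{-\alpha_i}(\log x)^{-3/2}$ term, which beats the $O(x^{-\alpha_i-\delta}\log x)/\sqrt{\log x}$ contribution. Your version is arguably more robust and conceptually cleaner (it makes transparent that the mechanism is strict concavity of the square root plus a non-degenerate quadratic variation of $g$, rather than anything special about the crossing jumps), at the cost of requiring the a priori integrability of $\Delta$ and $\Delta^2$, which you correctly note holds since $\log(1+X_1)$ has all moments. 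The only part left as a black box is the $Dg$ estimate itself, but that is supplied verbatim by the paper's Lemma~\ref{lem:G-estimate}.
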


It is not easy to perform the integrals required to estimate $Dh (x, i)$ (and hence establish Lemma~\ref{lem:h-lem})
directly. However, the integrals for $D g (x, i)$
are computable (they appear in Lemma~\ref{lem:j-integrals}), and we can use some analysis to relate
$Dh (x, i)$ to $D g (x, i)$. Thus the first step in our proof of Lemma~\ref{lem:h-lem} is to estimate $D g ( x,i)$.

 For the Lyapunov function $g$, 
\eqref{general-function} gives for $x \in \RP$ and $i \in \cS$,
\begin{align}
\label{additive-function}
Dg(x,i) &= \Exp \left[ g (X_{n+1}, \xi_{n+1}) - g  (X_n,\xi_n) \mid (X_n, \xi_n ) = (x, i) \right] \nonumber\\
& = \sum_{j \in \cS} p (i,j) (\lambda_j - \lambda_i )   \int_{-\infty}^{-x}   w_i (y) \ud y 
+  \int_{-\infty}^\infty   \left( g(x+y) - g(x) \right) w_i (y) \ud y \nonumber \\
& = \chi_i \sum_{j \in \cS} p (i,j) (\lambda_j - \lambda_i )T_{i} (x) 
+  G^\flat_{i} (x),
\end{align}
where we have used
 the fact that $g(x)$ is defined for all $x \in \R$ and 
 symmetric about $0$, and we have introduced the notation
 \[ T_i (x) := \int_x^\infty v_i (y) \ud y = \frac{c_i}{\alpha_i} x^{-\alpha_i} + O (x ^{-\alpha_i - \delta} ), \]
 and 
 \begin{equation}
\label{G-def}
 G^\flat_{i} (x) :=
 \begin{cases}
\frac{1}{2} \int_{-\infty}^\infty \left( g (x+y) - g (x) \right) v_i (|y|) \ud y  
 & \text{ if } \flat={\rm sym}\\
 \int_{0}^\infty \left( g(x-y) - g(x) \right) v_i (y) \ud y  & \text{ if } \flat={\rm one.}
 \end{cases}
 \end{equation}
The next lemma, proved in the next subsection, estimates the integrals in~\eqref{G-def}.

\begin{lemma}
\label{lem:G-estimate}
Suppose that $v_i \in \fD^+_{\alpha_i,c_i}$. Then, for $\flat \in \{ \textup{one}, \textup{sym} \}$, 
for $\alpha \in (0, 1/\chi_i)$,
  for some $\eta >0$, as $x \to \infty$,
 \[ G^\flat_{i} ( x ) =  \chi_i c_{i} x^{-{\alpha_i}} \frac{\pi}{{\alpha_i}} \cot (   \chi_i \pi \alpha_i   ) 
+ O ( x^{-{\alpha_i}-\eta}   ) .\]
\end{lemma}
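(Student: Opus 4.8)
The plan is to evaluate the two integrals in~\eqref{G-def} essentially by hand: use the evenness of $g$ to reduce them to integrals over $\RP$, rescale space by a factor $1+x$, and recognise the resulting limiting integrals among those of Lemma~\ref{lem:j-integrals}. Since $g(x)=\log(1+|x|)$ is even, mapping $y\mapsto-y$ on the negative half-line gives
\[ G^{\rm sym}_i(x)=\tfrac12\int_0^\infty\bigl(g(x+y)+g(|x-y|)-2g(x)\bigr)v_i(y)\,\ud y , \]
while $G^{\rm one}_i(x)=\int_0^\infty\bigl(g(|x-y|)-g(x)\bigr)v_i(y)\,\ud y$ is already of this form. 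On the range $y\le1$, Taylor's theorem applied to $g$ about $x$ yields $g(x+y)+g(x-y)-2g(x)=O(y^2x^{-2})$ and $g(x-y)-g(x)=O(yx^{-1})$ uniformly, so, using $\sup_y c_i(y)<\infty$ (and $\alpha_i<2$, resp.\ $\alpha_i<1$), this part of each integral is $O(x^{-2})$, resp.\ $O(x^{-1})$, hence $o(x^{-\alpha_i})$ because $\chi_i\alpha_i<1$.

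For $y>1$ I would substitute $y=(1+x)t$. Then $1+x+y=(1+x)(1+t)$, and $1+|x-y|=(1+x)(1-t)$ for $y<x$ while $1+|x-y|=(1+x)(t-1)+2$ for $y>x$, so that $g(x+y)-g(x)=\log(1+t)$ and $g(|x-y|)-g(x)$ equals $\log(1-t)$ for $t<x/(1+x)$ and $\log\!\bigl(t-1+\tfrac{2}{1+x}\bigr)$ for $t>x/(1+x)$, while $v_i(y)\,\ud y=c_i((1+x)t)(1+x)^{-\alpha_i}t^{-1-\alpha_i}\,\ud t$. Using $v_i\in\fD^+_{\alpha_i,c_i}$, write $c_i((1+x)t)=c_i+O\bigl((1+x)^{-\delta}t^{-\delta}\bigr)$, where we may shrink $\delta$ so that $\alpha_i+\delta<1/\chi_i$; arguing as in the proof of Lemma~\ref{lem:all-integrals} (via Lemma~\ref{lem:integral_approx}) the remainder then contributes $O\bigl((1+x)^{-\alpha_i-\delta}\bigr)$. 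Replacing $t-1+\tfrac{2}{1+x}$ by $t-1$ and the split point $x/(1+x)$ by $1$ costs a further $O(x^{-\alpha_i-1}\log x)$: only the slab $|t-1|\lesssim x^{-1}$ is affected, where one uses $\int_0^c|\log v|\,\ud v=O\bigl(c\log\tfrac{1}{c}\bigr)$ with $c=\tfrac{2}{1+x}$, together with the bound $\log\!\bigl(1+\tfrac{2}{(1+x)(t-1)}\bigr)\le\tfrac{2}{(1+x)(t-1)}$ on the tail. What is left is $\tfrac12c_i(1+x)^{-\alpha_i}$ times
\[ \int_0^1\frac{\log(1-t^2)}{t^{1+\alpha_i}}\,\ud t+\int_1^\infty\frac{\log(1+t)+\log(t-1)}{t^{1+\alpha_i}}\,\ud t = j_1^{\alpha_i}+j_0^{\alpha_i}+j_2^{\alpha_i} \]
in the symmetric case, and $c_i(1+x)^{-\alpha_i}\bigl(\tilde{j}_1^{\alpha_i}+j_2^{\alpha_i}\bigr)$ in the one-sided case, in the notation of Lemma~\ref{lem:j-integrals}.

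Finally, Lemma~\ref{lem:j-integrals} gives $j_0^\alpha+j_1^\alpha+j_2^\alpha=\tfrac{1}{\alpha}\bigl(\psi(1-\tfrac{\alpha}{2})-\psi(\tfrac{\alpha}{2})\bigr)$ and $\tilde{j}_1^\alpha+j_2^\alpha=\tfrac{1}{\alpha}\bigl(\psi(1-\alpha)-\psi(\alpha)\bigr)$, and the digamma reflection formula $\psi(1-z)-\psi(z)=\pi\cot(\pi z)$ turns these into $\tfrac{\pi}{\alpha}\cot(\tfrac{\pi\alpha}{2})$ and $\tfrac{\pi}{\alpha}\cot(\pi\alpha)$ respectively. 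Since $\chi_i=\tfrac12$ for $\flat={\rm sym}$ and $\chi_i=1$ for $\flat={\rm one}$, and $(1+x)^{-\alpha_i}=x^{-\alpha_i}+O(x^{-\alpha_i-1})$, in both cases the main term is exactly $\chi_i c_i x^{-\alpha_i}\tfrac{\pi}{\alpha_i}\cot(\chi_i\pi\alpha_i)$; collecting all the error terms and setting $\eta:=\min\{\delta,\tfrac12\}$ gives the claim.

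The main technical point is the behaviour near $t=1$ (that is, $y=x$), where the logarithmic differences blow up like $\log x$: pointwise convergence of the rescaled integrand to its limit is not uniform there, so rather than simply discarding a neighbourhood of $t=1$ one must verify, for both the exact and the limiting integrand, that the slab $|t-1|\lesssim x^{-1}$ and the tail $t-1\gtrsim x^{-1}$ each contribute only $O(x^{-\alpha_i-1}\log x)$; this is exactly where the elementary estimates $\int_0^c|\log v|\,\ud v=O\bigl(c\log\tfrac{1}{c}\bigr)$ and $\log(1+u)\le u$ enter, and everything else is routine rescaling plus the identities of Lemma~\ref{lem:j-integrals}. (As an aside, in line with the remark following Lemma~\ref{lem:j-integrals} one could instead read off the leading constant by differentiating the $\fD^+$-version of Lemma~\ref{lem:Df} with respect to $\nu$ at $\nu=0$ with $\lambda_k\equiv1$, using $\partial_\nu(1+x)^\nu|_{\nu=0}=g(x)$, $i_{2,1}^{0,\alpha_i}=1/\alpha_i$ and $\partial_\nu R^\flat(\alpha_i,\nu)|_{\nu=0}=\pi\cot(\chi_i\pi\alpha_i)$; but then one must justify differentiating under the remainder term, which is no simpler than the direct estimate.)
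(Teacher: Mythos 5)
Your proof is correct and follows essentially the same route as the paper's: the same decomposition of $G^\flat_i$, the substitution $y=(1+x)t$, identification of the rescaled limits with $j_0^{\alpha_i}+j_1^{\alpha_i}+j_2^{\alpha_i}$ (resp.\ $\tilde j_1^{\alpha_i}+j_2^{\alpha_i}$) from Lemma~\ref{lem:j-integrals}, and the digamma reflection formula — indeed you are more explicit than the paper about the slab $|t-1|\lesssim x^{-1}$ and about the one-sided case, which the paper leaves as ``analogous computations''. One trivial correction: your final choice $\eta=\min\{\delta,\tfrac12\}$ need not dominate the $O(x^{-2})$ (resp.\ $O(x^{-1})$) error from the region $y\le 1$ when $\alpha_i$ is close to $1/\chi_i$; take instead, say, $\eta=\tfrac12\min\{\delta,\,\chi_i^{-1}-\alpha_i,\,1\}$, which is all the lemma's ``for some $\eta>0$'' requires.
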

Note that Lemma~\ref{lem:G-estimate} together with~\eqref{additive-function} shows that
\begin{align}
\label{eq:g-critical}
 D g (x,i)  
& = \frac{\chi_i c_i}{\alpha_i}  \left( a_i + \sum_{j \in \cS} p (i,j) (\lambda_j - \lambda_i ) \right) x^{-{\alpha_i}} + O ( x^{-{\alpha_i}-\eta} ) 
= O ( x^{-\alpha_i -\eta} ) ,
\end{align}
by~\eqref{eq:crit-a}. This is not enough by itself to establish recurrence, since the sign of the $O ( x^{-\alpha_i -\eta} )$ term
is unknown. This is why we need the function $h(x,i)$.

\subsection{Proof of recurrence in the critical case}

\begin{proof}[Proof of Lemma~\ref{lem:G-estimate}.]
To ease notation, we drop the subscripts $i$ everywhere for the duration of this proof.
From \eqref{G-def} we obtain, for $x >0$,
\begin{align}
\label{logint}
\Gs (x  ) & = \frac{1}{2} \int_x^\infty \log \left( \frac{1+y -x}{1+x} \right) v  (y) \ud y \nonumber\\
& {} \quad {} + \frac{1}{2} \int_x^\infty \log \left( \frac{1+x+y}{1+x} \right) v  (y) \ud y \nonumber\\
& {} \quad {} + \frac{1}{2} \int_0^x \left[ \log \left( \frac{1+x+y}{1+x} \right) + \log \left( \frac{1+x-y}{1+x} \right) \right ] v  (y) \ud y .
\end{align}
Let $\alpha \in (0,2)$.
The claim in the lemma for $\flat = \textup{sym}$ will follow from the estimates
\begin{align}
\label{log-integrals}
\int_x^\infty \log \left( \frac{1+x+y}{1+x} \right) v  (y) \ud y & = c x^{-\alpha} j_0^\alpha + O (x^{-\alpha-\eta} ) ; \nonumber\\
\int_x^\infty \log \left( \frac{1+y -x}{1+x} \right) v  (y) \ud y & = c x^{-\alpha} j_2^\alpha + O (x^{-\alpha-\eta} ) ; \nonumber\\
\int_0^x \left[ \log \left( \frac{1+x+y}{1+x} \right) + \log \left( \frac{1+x-y}{1+x} \right) \right ] v  (y) \ud y & = c x^{-\alpha} j_1^\alpha + O (x^{-\alpha-\eta} );
\end{align}
since then
we obtain from~\eqref{logint} with~\eqref{log-integrals} and Lemma~\ref{lem:j-integrals} that
\[ \Gs (x  ) = \frac{c}{2} x^{-\alpha} \frac{1}{\alpha} \left( \psi \left( 1 - \tfrac{\alpha}{2} \right) - \psi \left( \tfrac{\alpha}{2} \right) \right)  + O (x^{-\alpha-\eta} ),\]
which yields the stated result via the digamma reflection formula (equation 6.3.7 from \cite[p.\ 259]{as}).
We present here in detail the proof of only the final estimate in~\eqref{log-integrals}; the others are similar.
Some algebra followed by the substitution $u = y/(1+x)$ 
shows that the third integral in~\eqref{log-integrals} is
\[  \int_0^x \log \left( 1 - \frac{y^2}{(1+x)^2} \right) c (y) y^{-1-\alpha} \ud y 
=   (1+x)^{-\alpha} \int_0^{\frac{x}{1+x}} \log ( 1 -u^2 ) c ( u (1+x) ) u^{-1-\alpha} \ud u .\]
 There is a constant $C \in \RP$ such that for all $x$ sufficiently large,
\[ \int_0^{\frac{\sqrt{x}}{1+x}} \left| \log ( 1 -u^2 ) c ( u (1+x) ) u^{-1-\alpha} \right| \ud u
\leq C  \int_0^{\frac{\sqrt{x}}{1+x}} u^{1-\alpha} \ud u = O ( x^{(\alpha/2)-1} ), \]
using Taylor's theorem for $\log$ and the fact that $c( y)$ is uniformly bounded.
On the other hand,
for $u > \frac{\sqrt{x}}{1+x}$ we have $ c ( u (1+x) ) = c + O (x^{-\delta/2} )$, so that
\[ \int_{\frac{\sqrt{x}}{1+x}}^{\frac{x}{1+x}} \log ( 1 -u^2 ) c ( u (1+x) ) u^{-1-\alpha} \ud u 
= c \int_{\frac{\sqrt{x}}{1+x}}^{\frac{x}{1+x}} \log ( 1 -u^2 ) u^{-1-\alpha} \ud u + O (x^{-\delta/2} ) .\]
Here we have that
\[ \int_{\frac{\sqrt{x}}{1+x}}^{\frac{x}{1+x}} \log ( 1 -u^2 ) u^{-1-\alpha} \ud u 
= j_1^\alpha + O ( x^{(\alpha/2)-1 } ) + O ( x^{-1} \log x ) .\]
Combining these estimates and using the fact that  $\alpha \in (0,2)$,
we obtain the final estimate in~\eqref{log-integrals}.
The claim in the lemma for $\flat = \textup{one}$ follows after some analogous computations, which we omit.
\end{proof}
 
Now we relate $D h (x,i)$ to $D g (x,i)$, by comparing the individual integral terms. 
\begin{lemma}
\label{lem:h-vs-g}
Suppose that $v_i \in \fD^+_{\alpha_i,c_i}$. Then, for all $x$ and all $i$,
\begin{align*}
\int_x^\infty \bigl( h (x+y, i) -h(x,i) \bigr) v_i (y) \ud y & \leq \frac{1}{2 h(x,i)} \int_x^\infty \left( g (x+y ) -g(x ) \right) v_i (y) \ud y ; \\
\int_0^x \bigl( h (x-y ,i) -  h(x,i)  \bigr) v_i (y) \ud y & \leq  \frac{1}{2 h (x,i)} \int_0^x \left( g (x-y) - g(x) \right) v_i (y) \ud y ; \\
\text{and }
 \int_0^x \bigl( h (x+y,i) + h(x-y,i)  &  -  2 h (x, i ) \bigr) v_i (y) \ud y \\
  \leq \frac{1}{2 h (x,i)} \int_0^x \bigl( g(x+y)  &  +  g (x-y) - 2 g(x) \bigr) v_i (y) \ud y .
\end{align*}
Finally, there exists $\eps>0$ such that, for all $i, j \in \cS$ and all $x$ sufficiently large,
\begin{align*}
\int_x^\infty \left( h (y-x, j) - h (x, i) \right) v_i (y) \ud y & \leq \frac{1}{2 h (x,i)} \int_x^\infty \left( g (y-x,j) - g(x,i) \right) v_i (y) \ud y \\
& \qquad\qquad\qquad\qquad {} - \eps \frac{x^{-\alpha_i}}{\log^{3/2} x} .\end{align*} 
\end{lemma}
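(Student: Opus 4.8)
The plan is to derive all four estimates from the \emph{sharpened} concavity of $t\mapsto t^{1/2}$. The elementary identity, valid for all $a,b>0$,
\[ b^{1/2}-a^{1/2}=\frac{b-a}{2a^{1/2}}-\frac{(b-a)^2}{2a^{1/2}\,(a^{1/2}+b^{1/2})^2}, \]
follows from $b^{1/2}-a^{1/2}=(b-a)/(a^{1/2}+b^{1/2})$ after simplifying $\frac{b-a}{2a^{1/2}}-\frac{b-a}{a^{1/2}+b^{1/2}}$, and its remainder term is non-negative. For the first three estimates I would apply the resulting inequality $b^{1/2}-a^{1/2}\le(b-a)/(2a^{1/2})$ pointwise with $a=g(x,i)$ and $b=g(x\pm y,i)$; since the branch weight $\lambda_i$ is unchanged when the walk stays on branch $i$, one has $g(x\pm y,i)-g(x,i)=g(x\pm y)-g(x)$ and $(g(x,i))^{1/2}=h(x,i)$, so multiplying by $v_i(y)\ge0$ and integrating over the relevant region (adding the $+y$ and $-y$ bounds before integrating for the third estimate) gives exactly the claimed inequalities.

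For the fourth estimate I would keep the remainder. Taking $a=g(x,i)=\log(1+x)+\lambda_i$ and $b=g(y-x,j)=\log(1+y-x)+\lambda_j$ (legitimate since $y>x$ throughout), multiplying by $v_i(y)$ and integrating over $y\in(x,\infty)$ yields
\[ \int_x^\infty\bigl(h(y-x,j)-h(x,i)\bigr)v_i(y)\,\ud y=\frac{1}{2h(x,i)}\int_x^\infty\bigl(g(y-x,j)-g(x,i)\bigr)v_i(y)\,\ud y-\cE_{i,j}(x), \]
where $\cE_{i,j}(x):=\int_x^\infty\frac{(g(y-x,j)-g(x,i))^2}{2h(x,i)(h(x,i)+h(y-x,j))^2}v_i(y)\,\ud y\ge0$; all three integrals converge since $g(y-x,j)=O(\log y)$ as $y\to\infty$ while $v_i(y)=O(y^{-1-\alpha_i})$ with $\alpha_i>0$, and the integrands are bounded near $y=x$. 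It then remains to prove $\cE_{i,j}(x)\ge\eps\,x^{-\alpha_i}/\log^{3/2}x$ for all large $x$, uniformly over the (finitely many) pairs $(i,j)$.

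To bound $\cE_{i,j}$ from below I would restrict the integral to the window $y\in(x,2x)$, which carries the bulk of the mass $\int_x^\infty v_i\asymp x^{-\alpha_i}$. On this window $y-x<x$, so both $h(y-x,j)$ and $h(x,i)$ are $(\log x+O(1))^{1/2}$, whence the denominator is at most $C_0(\log x)^{3/2}$ with $C_0$ absolute, giving $\cE_{i,j}(x)\ge\frac{1}{C_0(\log x)^{3/2}}\int_x^{2x}(g(y-x,j)-g(x,i))^2v_i(y)\,\ud y$. Substituting $y=xw$, using $v_i(xw)=c_i(xw)(xw)^{-1-\alpha_i}$ with $c_i(xw)\ge c_i/2$ for $x$ large, and the expansion
\[ g(xw-x,j)-g(x,i)=\log\frac{1+x(w-1)}{1+x}+\lambda_j-\lambda_i=\log(w-1)+(\lambda_j-\lambda_i)+O(x^{-1/2}) \]
(uniform for $w\in(1+x^{-1/2},2)$), the last integral reduces to $c_i x^{-\alpha_i}$ times a quantity that converges, as $x\to\infty$, to $\int_1^2(\log(w-1)+\lambda_j-\lambda_i)^2w^{-1-\alpha_i}\,\ud w$; this constant is strictly positive (the integrand is non-negative, vanishes at most at one point, and $(\log(w-1))^2$ is integrable near $w=1$) and, being a minimum over finitely many pairs, is bounded below uniformly. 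Collecting constants gives $\cE_{i,j}(x)\ge\eps\,x^{-\alpha_i}/\log^{3/2}x$, as required.

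The main obstacle is precisely this last lower bound: one must work on the \emph{macroscopic} window $y\asymp x$, where $\int v_i$ is of the full order $x^{-\alpha_i}$ (a thin window near $y=x$ would only see mass $o(x^{-\alpha_i})$), while simultaneously this is exactly the window on which the denominator $2h(x,i)(h(x,i)+h(y-x,j))^2$ is uniformly of size $(\log x)^{3/2}$; the scaling substitution $y=xw$ is what makes the logarithmic factors inside the square both harmless for the upper bound on the denominator and non-degenerate in the integral of the numerator.
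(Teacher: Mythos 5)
Your proof is correct. The first three inequalities are obtained in essentially the same way as in the paper: both arguments rest on the identity $h(z,j)-h(x,i)=\bigl(g(z,j)-g(x,i)\bigr)/\bigl(h(z,j)+h(x,i)\bigr)$, the only cosmetic difference being that you package the comparison with $\frac{g(z,j)-g(x,i)}{2h(x,i)}$ as a single concavity inequality with explicit non-negative remainder, whereas the paper checks the sign of the numerator and the monotonicity of the denominator case by case. For the fourth (and crucial) estimate, however, your route is genuinely different. The paper locates the crossing point $\psi(x)=x-1+(1+x)\re^{\lambda_i-\lambda_j}$ at which the numerator $g(y-x,j)-g(x,i)$ changes sign and the denominator equals exactly $2h(x,i)$, uses monotonicity of the denominator to get the baseline bound on all of $(x,\infty)$, and then extracts the extra $-\eps x^{-\alpha_i}\log^{-3/2}x$ from the far tail $y\geq Kx$ (with $K$ chosen large depending on $\max_j\lambda_j$), where the numerator is positive and the denominator exceeds $2h(x,i)$ by an amount of order $\log^{-1/2}x$. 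You instead keep the exact quadratic remainder $\cE_{i,j}(x)$, which is automatically non-negative with no sign analysis at all, and bound it below on the macroscopic window $y\in(x,2x)$ via the scaling $y=xw$; your computation of the limiting constant $\int_1^2(\log(w-1)+\lambda_j-\lambda_i)^2w^{-1-\alpha_i}\,\ud w>0$ and the uniform upper bound $C_0\log^{3/2}x$ on the denominator there are both sound, and taking a minimum over the finitely many pairs $(i,j)$ gives the required uniform $\eps$. Your version buys a cleaner and more transparent accounting of where the gain of order $x^{-\alpha_i}\log^{-3/2}x$ comes from (full tail mass divided by the size of the denominator, with the squared logarithm supplying a non-degenerate constant); the paper's version avoids the rescaled limit computation but pays for it with the crossing-point and large-$K$ bookkeeping. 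Both are valid proofs of the stated lemma.
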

\begin{proof}
The proof is based on the observation that, since $( h(x,i) )^2 = g(x,i)$, 
\[ h(z,j) - h(x,i) = \frac{g(z,j) - g (x,i)}{h(z,j) + h (x, i)} .\]
Thus, for $y \geq 0$, 
\begin{align}
\label{eq55}
  h (x+y, i) -h(x,i)  = \frac{g(x+y) - g (x)}{h(x+y,i) + h (x, i)} \leq \frac{g (x+y) - g(x)}{2 h (x, i)} ,\end{align}
since $g(x+y) - g (x) \geq 0$ and $h(x+y,i) \geq h(x,i)$. This gives the first inequality in the lemma. Similarly,
for $y \in [0,x]$,
\begin{align*} 
h (x-y ,i) -  h(x,i) = \frac{g (x -y) - g(x)}{h(x-y,i)+h(x,i)} \leq \frac{g (x -y) - g(x)}{2h(x,i)} ,\end{align*}
since $g (x -y) - g(x) \leq 0$ and $h(x-y,i) \leq h(x,i)$. This gives the second inequality, and also yields the third
inequality once combined with the $y \in [0,x]$ case of~\eqref{eq55}.

Finally, for $y \geq x$ note that
 \begin{align}
\label{eq:h-tricky-integral}
  h (y-x ,j )  -h (x ,i )  & = \frac{g (y-x ,j ) -g (x ,i ) }
{  h ( y-x , j ) + h (x , i )} .
\end{align}
Also note that, for $y \geq x > 0$, $g (x , i ) = g(x) + \lambda_i$ and
$g (y-x , j )  = g( y-x ) +\lambda_j$, so
\[  g (y -x , j ) -g (x , i ) = \log \left( \re^{\lambda_j-\lambda_i} \frac{1+y-x}{1+x} \right) .\]
So the sign of the expression in \eqref{eq:h-tricky-integral} is non-positive for $y \leq \psi (x) := x -1 + (1+x) \re^{\lambda_i - \lambda_j}$
and non-negative for $y \geq \psi (x)$, and
\begin{equation}
\label{eq:psi-def} 
g (\psi(x) - x, j ) -g (x , i ) = 0 .\end{equation}
By the monotonicity in $y$ of the denominator,  the expression in \eqref{eq:h-tricky-integral} satisfies
\[ \frac{g (y-x ,j ) -g (x ,i ) }
{  h ( y-x , j ) + h (x , i )}  \leq \frac{g (y-x ,j ) -g (x ,i ) }
{  h ( \psi(x) -x , j ) + h (x , i )}  \]
  both for $y \in [0,\psi(x)]$ and for $y \in [\psi(x), \infty)$.
Here
$h ( \psi(x) - x,j)  =  h (x , i )$,
by \eqref{eq:psi-def}. Hence we obtain the bound
\begin{align*} 
 \int_x^\infty
 \left( h (y-x , j )  -h (x , i ) \right) v_i (y) \ud y   
\leq
\int_x^\infty \left( \frac{g (y-x,j ) -g (x,i  ) }
{  2 h ( x , i  ) } \right) v_i (y) \ud y. \end{align*}
To improve on this estimate, suppose that $y \geq K x$ where $K \in \N$
is such that $K x > \psi(x)$. 
Then, using the fact that the numerator in~\eqref{eq:h-tricky-integral} is positive, 
we may choose $K \in \N$ such that for all $j$ and all $y \geq Kx$,
\[  h (y-x ,j )  -h (x ,i ) \leq \frac{g (y-x ,j ) -g (x ,i ) }
{  h ( (K-1) x  , j ) + h (x , i )} \leq
\frac{g (y-x ,j ) -g (x ,i ) }
{  ( \log ( 1 + |x| ) + \lambda_i + 1 )^{1/2} + h (x,i) } 
, \]
for  $K$ sufficiently large (depending on $\max_j \lambda_j$) and and all $x$ sufficiently large. Here
\[  ( \log ( 1 + |x| ) + \lambda_i + 1 )^{1/2} = h (x,i) \left( 1 + \frac{1}{g(x,i)} \right)^{1/2}
= h (x,i) + \frac{1 + o(1)}{2 \log^{1/2} x } .\]
It follows that, for some $\eps >0$, for all $x$ sufficiently large, for $y \geq Kx$,
\[ h (y-x ,j )  -h (x ,i ) \leq \frac{g (y-x ,j ) -g (x ,i ) }{   2 h(x,i)  } 
 - \eps \left( \frac{g (y-x ,j ) -g (x ,i ) }{  \log^{3/2} x } \right) .\]
The final inequality in the lemma now follows since, for all $x$ sufficiently large,
\begin{align*}
 \int_{Kx} ^\infty  \left( g (y-x ,j ) -g (x ,i ) \right) v_i (y) \ud y 
& \geq \frac{c_i}{2} \int_{K x} ^\infty \log \left( \re^{\lambda_j - \lambda_i }\left(  \frac{2 + y }{1+x} - 1 \right) \right) y^{-1-\alpha_i} \ud y \\
& \geq \frac{c_i}{2} x^{-\alpha_i} \int_{K/2}^\infty \log \left( \re^{\lambda_j - \lambda_i } ( u -1 ) \right) u^{-1-\alpha_i} \ud u ,
\end{align*}
where we used the substitution $u = \frac{2+y}{1+x}$. For $K$ sufficiently large the term inside the logarithm is uniformly positive, and the claimed bound follows.
\end{proof}

Now we may complete the proofs of Lemma~\ref{lem:h-lem} and then Theorem~\ref{thm:recurrence}(b)(iii).

\begin{proof}[Proof of Lemma~\ref{lem:h-lem}.]
Lemma~\ref{lem:h-vs-g} together with~\eqref{eq:g-critical} shows that,  
\[ D h ( x, i) \leq \frac{ D g (x,i)}{ 2 h (x, i) } - \eps \frac{ x^{-\alpha_i}}{\log^{3/2} x}  
 \leq - \eps \frac{ x^{-\alpha_i}}{\log^{3/2} x} + O ( x^{-\alpha_i - \eta } ) \leq 0 ,\]
for all $x$ sufficiently large.
\end{proof}

\begin{proof}[Proof of Theorem~\ref{thm:recurrence}(b)(iii).]
Lemma~\ref{lem:h-lem} with Lemma~\ref{l:adapted_rec_R} shows that $\liminf_{n \to \infty} X_n \leq x_0$, a.s.,
and then a similar argument to that in the proof of parts (a) and (b)(i) of Theorem~\ref{thm:recurrence}
shows that $\liminf_{n \to \infty} X_n =0$, a.s.
\end{proof}

\appendix

\section{Technical results}
\label{sec:appendix}

\subsection{Semimartingale results}
\label{sec:criteria}

\begin{lemma}
\label{l:adapted_rec_R}
Let $(X_n, \xi_n)$ be an $\cF_n$-adapted process taking values in $\RP \times \cS$.
Let $f: \RP \times \cS \to \RP$ be such that $\lim_{x \to \infty} f(x,i) =  \infty$ for all $i \in \cS$, and $\Exp f (X_0, \xi_0 ) < \infty$.
Suppose that there exist  $x_0 \in \RP$ and $C < \infty$
for which,
 for all $n \geq 0$,
\begin{align*}
 \Exp [ f(X_{n+1} , \xi_{n+1} ) - f(X_n, \xi_n ) \mid \cF_n ] & \leq 0, \text{ on } \{ X_n > x_0 \},   \as; \\
\Exp [ f(X_{n+1} , \xi_{n+1} ) - f(X_n , \xi_n ) \mid \cF_n ] & \leq C, \text{ on } \{ X_n \leq x_0 \},   \as \end{align*}
 Then 
\[ \Pr \Bigl[ \bigl\{ \limsup_{n \to \infty} X_n < \infty \bigr\} \cup \bigl\{ \liminf_{n \to \infty} X_n \leq x_0 \bigr\} \Bigr] = 1 .\]
\end{lemma}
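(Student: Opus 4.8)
The plan is to run the standard Foster--Lyapunov supermartingale argument with $Y_n := f(X_n,\xi_n)$, localized by stopping times to cope with the fact that the drift hypothesis is only assumed on $\{X_n > x_0\}$. Equivalently, I aim to show $\Pr[\{\limsup_n X_n = \infty\} \cap \{\liminf_n X_n > x_0\}] = 0$.

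First I would record two elementary consequences of the hypotheses. Combining the two displayed inequalities gives $\Exp[Y_{n+1}-Y_n \mid \cF_n] \le C\1{X_n \le x_0} \le C$ for all $n$, so by induction from $\Exp Y_0 = \Exp f(X_0,\xi_0) < \infty$ we get $\Exp Y_n \le \Exp Y_0 + nC < \infty$ for every $n$; thus each $Y_n$ is integrable. Second, since $\cS$ is finite and $f(x,i) \to \infty$ as $x \to \infty$ for each $i \in \cS$, the function $x \mapsto \min_{i\in\cS} f(x,i)$ tends to $\infty$, so $\{x : \min_{i\in\cS} f(x,i) \le K\}$ is bounded for every $K$; consequently, on any event where $\sup_n Y_n < \infty$ one automatically has $\sup_n X_n < \infty$.

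Next, fix $m \ge 0$ and set $\tau_m := \inf\{ n \ge m : X_n \le x_0\}$, a stopping time. I would check that $(Y_{n\wedge\tau_m})_{n \ge m}$ is a nonnegative supermartingale: it is nonnegative since $f \ge 0$ and integrable by the previous paragraph, and for $n \ge m$ one has $Y_{(n+1)\wedge\tau_m} - Y_{n\wedge\tau_m} = \1{\tau_m > n}(Y_{n+1}-Y_n)$, where $\{\tau_m > n\} \in \cF_n$ and $X_n > x_0$ on that event, so $\Exp[Y_{(n+1)\wedge\tau_m} - Y_{n\wedge\tau_m}\mid\cF_n] = \1{\tau_m > n}\,\Exp[Y_{n+1}-Y_n\mid\cF_n] \le 0$. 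By the supermartingale convergence theorem, $Y_{n\wedge\tau_m}$ converges a.s.\ to an a.s.-finite limit. On the event $B_m := \{X_n > x_0 \text{ for all } n \ge m\}$ we have $\tau_m = \infty$, hence $Y_{n\wedge\tau_m} = Y_n$ for $n \ge m$, so $(Y_n)_n$ converges and is in particular bounded, a.s.\ on $B_m$; by the second observation above this forces $\limsup_n X_n < \infty$, a.s.\ on $B_m$.

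Finally I would assemble the pieces: $\{\liminf_n X_n > x_0\} \subseteq \bigcup_{m\ge 0} B_m$ (if $\liminf_n X_n > x_0$ then $X_n > x_0$ for all large $n$), and this is a countable union, so a.s.\ on $\{\liminf_n X_n > x_0\}$ we have $\limsup_n X_n < \infty$; that is, $\Pr[\{\limsup_n X_n = \infty\}\cap\{\liminf_n X_n > x_0\}] = 0$, which is the claimed identity. The only points needing care are the integrability required to invoke supermartingale convergence and the stopping-time localization that converts the local drift condition on $\{X_n > x_0\}$ into a bona fide supermartingale; neither is a serious obstacle, so this is essentially a routine recurrence lemma.
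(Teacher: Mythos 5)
Your proposal is correct and follows essentially the same route as the paper's proof: establish integrability of $Y_n=f(X_n,\xi_n)$ by induction, stop the process at the first time after a fixed index that $X$ drops to $[0,x_0]$, apply supermartingale convergence, and use the divergence of $f(\cdot,i)$ (uniformly over the finite set $\cS$) to convert boundedness of $Y_n$ into boundedness of $X_n$ on the event that the stopping time is infinite. The only cosmetic difference is that you phrase the final step as covering $\{\liminf_n X_n>x_0\}$ by the countable union of the events $B_m$, whereas the paper takes the intersection over $n$ of the complementary events; these are the same argument.
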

\begin{proof}
First note that, by hypothesis,
$\Exp   f(X_{1}, \xi_1 ) \leq \Exp  f (X_0 ,\xi_0 ) + C  < \infty$, and, 
iterating this argument, it follows that $\Exp f (X_n, \xi_n) < \infty$ for all $n \geq 0$.

Fix $n \in \ZP$. For $x_0 \in \RP$ in the hypothesis of the lemma, write  
$\lambda = \min \{ m \geq n  : X_m \leq x_0 \}$.
Let $Y_m = f(X_{m\wedge \lambda}, \xi_{m \wedge \lambda} )$. Then $(Y_m, m \geq n)$
is an $(\cF_m , m \geq n )$-adapted  non-negative supermartingale.
Hence, 
by the supermartingale convergence theorem,
there exists~$Y_\infty \in \RP$ such that $\lim_{m \to \infty} Y_m = Y_\infty$, a.s.
In particular,  
\[ \limsup_{m \to \infty} f(X_m, \xi_m) \leq Y_\infty , \text{ on } \{ \lambda = \infty\} . \]
Set $\zeta_i = \sup \{ x \geq 0 : f(x,i) \leq 1+ Y_\infty \}$,
which has $\zeta_i < \infty$ a.s.~since $\lim_{x \to \infty} f(x,i) = \infty$.
Then $\limsup_{m \to \infty} X_m \leq \max_i \zeta_i < \infty$ on $\{ \lambda = \infty \}$. Hence
\[ \Pr \Bigl[ \bigl\{ \limsup_{n \to \infty} X_n < \infty \bigr\} \cup \bigl\{ \inf_{m \geq n  } X_m \leq x_0 \bigr\} \Bigr] = 1 .\]
Since $n  \in \ZP$ was arbitrary, the result follows:
\[ \Pr \Bigl[ \bigl\{ \limsup_{n \to \infty} X_n < \infty \bigr\} \cup \bigcap_{n  \geq 0} \bigl\{ \inf_{m \geq n } X_m \leq x_0 \bigr\} \Bigr] = 1 . \qedhere \]
\end{proof}

\begin{lemma}
\label{l:adapted_good_escape_R}
Let $(X_n, \xi_n)$ be an $\cF_n$-adapted process taking values in $\RP \times \cS$.
Let $f: \RP \times \cS \to \RP$ be such that
$\sup_{x,i} f(x,i) < \infty$ and
$\lim_{x \to \infty} f(x,i) = 0$ for all $i \in \cS$.
Suppose that there exists $x_1 \in \RP$ for which
$\inf_{y \leq x_1} f(y,i) >0$ for all $i$ and 
\[ \Exp [ f(X_{n+1}, \xi_{n+1} ) - f(X_n, \xi_n) \mid \cF_n ] \leq 0, \text{ on } \{X_n \geq x_1 \}, \text{ for all } n \geq 0. \]
Then for any $\eps >0$ there exists $x \in (x_1, \infty)$
  for which, for all $n \geq 0$,
\[ \Pr \Bigl[ \inf_{m \geq n} X_m \geq x_1 \Bigmid \cF_n \Bigr] \geq 1 - \eps, \text{ on } \{ X_n \geq x \} . \]
\end{lemma}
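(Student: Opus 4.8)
The plan is to deduce the statement from the maximal inequality for nonnegative supermartingales, applied to a stopped version of the process $f(X_n,\xi_n)$. First I would fix $n \geq 0$ and introduce the stopping time $\lambda := \min\{ m \geq n : X_m < x_1 \}$ (with the convention $\min\emptyset = \infty$), observing that $\{\lambda = \infty\}$ coincides exactly with $\{ \inf_{m\geq n} X_m \geq x_1 \}$, the event whose conditional probability we must bound below. Then I would set $Y_m := f( X_{m\wedge\lambda}, \xi_{m\wedge\lambda} )$ for $m \geq n$. Boundedness of $f$ gives integrability of each $Y_m$; and since $X_m \geq x_1$ whenever $n \leq m < \lambda$, the hypothesis $\Exp[ f(X_{m+1},\xi_{m+1}) - f(X_m,\xi_m) \mid \cF_m ] \leq 0$ on $\{X_m \geq x_1\}$ shows that $(Y_m)_{m\geq n}$ is a nonnegative $(\cF_m)_{m\geq n}$-supermartingale with $Y_n = f(X_n,\xi_n)$.

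The next step is to quantify the behaviour at the stopping time. Put $c := \min_{i\in\cS} \inf_{y\leq x_1} f(y,i)$, which is strictly positive because $\cS$ is finite and each $\inf_{y\leq x_1} f(y,i) > 0$ by hypothesis. On $\{\lambda < \infty\}$ we have $X_\lambda < x_1$, hence $Y_\lambda = f(X_\lambda,\xi_\lambda) \geq c$, and therefore $\{\lambda < \infty\} \subseteq \{ \sup_{k\geq n} Y_k \geq c \}$. Applying the conditional form of the supermartingale maximal inequality to $(Y_m)_{m\geq n}$ gives, almost surely,
\[ \Pr\Bigl[ \sup_{k\geq n} Y_k \geq c \Bigmid \cF_n \Bigr] \;\leq\; \frac{ Y_n }{c} \;=\; \frac{ f(X_n,\xi_n) }{c} , \]
and combining this with the inclusion above yields $\Pr[ \inf_{m\geq n} X_m \geq x_1 \mid \cF_n ] \geq 1 - f(X_n,\xi_n)/c$.

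To finish, given $\eps > 0$ I would use that $\lim_{y\to\infty} f(y,i) = 0$ for each $i$ together with finiteness of $\cS$ to choose $x > x_1$ large enough that $f(y,i) < \eps c$ for all $y \geq x$ and all $i \in \cS$; then on $\{X_n \geq x\}$ the previous bound gives $\Pr[ \inf_{m\geq n} X_m \geq x_1 \mid \cF_n ] \geq 1 - \eps$, as required. I expect the only genuinely delicate point to be the conditional maximal inequality — i.e.\ justifying the displayed bound with conditioning on $\cF_n$ rather than its unconditional counterpart, and checking that the stopped process is a supermartingale with respect to the shifted filtration $(\cF_m)_{m\geq n}$; everything else, including the passage from pointwise limits and infima over $i$ to uniform ones, is routine and relies only on finiteness of $\cS$.
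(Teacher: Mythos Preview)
Your proof is correct and follows essentially the same approach as the paper: stop $f(X_m,\xi_m)$ at the first time after $n$ that $X_m$ falls below $x_1$, use the supermartingale property of the stopped process to bound $\Pr[\lambda<\infty\mid\cF_n]$ by $f(X_n,\xi_n)/c$ with $c=\min_i\inf_{y\leq x_1}f(y,i)$, and then choose $x$ large. The only cosmetic difference is that the paper obtains the key inequality via supermartingale convergence plus optional stopping rather than citing the maximal inequality, but these are equivalent here.
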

\begin{proof}
Fix $n \in \ZP$. For $x_1 \in \RP$ in the hypothesis of the lemma, write  
$\lambda = \min \{ m \geq n  : X_m \leq x_1 \}$ and set $Y_m = f(X_{m \wedge \lambda}, \xi_{m \wedge \lambda} )$.
Then $(Y_m , m \geq n )$ is an $(\cF_m , m \geq n )$-adapted non-negative 
supermartingale, and so 
 converges a.s.\ as $m \to \infty$ to some $Y_\infty \in \RP$. 
Moreover, by the optional stopping theorem for supermartingales,
\[ Y_{n} \geq 
\Exp [ Y_\infty \mid \cF_{n} ]
\geq 
\Exp [ Y_\infty \1 { \lambda  < \infty } \mid \cF_{n} ] , \as \]
Here we have that, a.s.,
\begin{align*} Y_\infty   \1 { \lambda < \infty } = \lim_{m \to \infty}
Y_m \1 { \lambda < \infty } & = 
f ( X_{ \lambda } , \xi_\lambda )  \1 { \lambda < \infty } \\
& \geq \min_i \inf_{y \leq x_1} f(y,i)  \1 { \lambda < \infty } .\end{align*}
Combining these inequalities we obtain
\[   \min_i \inf_{y \leq x_1} f(y,i) \Pr [  \lambda < \infty \mid \cF_{n} ] \leq Y_{n}, \as \]
In particular, on $\{ X_{n} \geq x  > x_1 \}$, we have $Y_{n} = f( X_{n}, \xi_n )$ and so 
\[  
\min_i \inf_{y \leq x_1} f(y) \Pr [ \lambda < \infty \mid \cF_n ]
\leq f (X_n, \xi_n) \leq \max_i \sup_{y \geq x } f(y, i)  .\]
Since $\lim_{y \to \infty} f(y,i) = 0$
and $\inf_{y \leq x_1} f(y,i) >0$, 
given $\eps >0$ we can choose $x  > x_1$ large enough so that
\[ \frac{ \max_i \sup_{y \geq x } f(y,i)}{\min_i \inf_{y \leq x_1} f(y,i)} < \eps ;\]
 the choice of $x$ depends only on $f$, $x_1$, and $\eps$, and, in particular, does not depend on $n$. 
Then, 
on $\{ X_n \geq x  \}$, 
$\Pr [ \lambda < \infty \mid \cF_n ] < \eps$, as claimed.
\end{proof}
 
\subsection{Proofs of integral computations}
\label{sec:integrals}

\begin{proof}[Proof of Lemma \ref{lem:i-integrals}.]
Consider $i_0^{\nu,\alpha}$. With the change of variable $v = 1/u$, we get
\begin{align*}
\int_0^1 u^{-1-\alpha} (1+u)^\nu \ud u & = \int_0^1 v^{\alpha - \nu - 1} (1+ v)^\nu \ud v \\
& = \frac{\Gamma (\alpha-\nu) \Gamma (1)}{\Gamma (\alpha - \nu +1 )} \HG ( - \nu, \alpha -\nu ;
\alpha-\nu +1 ; -1 ),\end{align*}
by the integral representation for the Gauss hypergeometric function (see equation 15.3.1 of~\cite[p.~558]{as}).
The given expression for $i_0^{\nu,\alpha}$ follows.

The integral $i_{2,0}^{\alpha}$ is trivial. Consider $i_{2,1}^{\nu,\alpha}$.
The substitution $v = 1/u$ gives
\[  \int_1^\infty u^{-1-\alpha} ( u -1)^{\nu}  \ud u = \int_0^1 (1- v)^{\nu} v^{\alpha-\nu-1} \ud v = \frac{\Gamma (1 +\nu) \Gamma (\alpha -\nu)}{\Gamma (1+\alpha)} ,\]
by the integral formula for the Beta function (see equation 6.2.1 of~\cite[p.~258]{as}), provided $\nu > - 1$ and $\alpha - \nu >0$.
Hence we obtain the given expression for $i_{2,1}^{\nu,\alpha}$.

Next consider $i_1^{\nu,\alpha}$. By considering separately
the asymptotics of the integrand as $u \downarrow 0$ and $u \uparrow 1$,
we see that $i_1^{\nu,\alpha}$ is finite provided
 $\alpha \in (0,2)$ and $\nu > -1$.
For $u\in [-1,1]$, we have the  Taylor series expansion $(1+u)^\nu = \sum_{n\geq 0} \frac{u^n}{n!} \frac{\Gamma(\nu+1)}{\Gamma(\nu+1-n)}$. Hence 
\begin{align*} \left( (1+u)^\nu +(1-u)^\nu-2 \right) u^{-1-\alpha} & = 2 \sum_{n\geq 1} \frac{u^{2n-1-\alpha}}{(2n)!} \frac{\Gamma(\nu+1)}{\Gamma(\nu+1-2n)}  .  \end{align*}
Here, the power series for $n \geq 2$ converges normally (hence uniformly) over $|u| \leq 1$. This remark allows interchanging 
 summation and integration  to obtain 
$i_1^{\nu, \alpha} =  W^{\nu,\alpha} (1)$, where for $|z| \leq 1$ we define
\begin{align}
\label{eq:W-def}
W^{\nu,\alpha}(z)&:= 2\sum_{n\geq 1} \frac{z^{n}}{2n-\alpha} \frac{1}{(2n)!} \frac{\Gamma(\nu+1)}{\Gamma(\nu+1-2n)}\\
&=\frac{1}{2-\alpha} \nu(\nu-1)z\left[ 1 + \frac{2-\alpha}{4-\alpha} \frac{1}{4!} (\nu-2)(\nu-3) z \right. \nonumber\\
& {} \qquad {} \left. + \frac{2-\alpha}{6-\alpha} \frac{1}{6!} (\nu-2)(\nu-3)(\nu-4)(\nu-5) z^2+\ldots \right] \nonumber\\
&= \frac{1}{2-\alpha} \nu(\nu-1)z\left[\sum_{n\geq 0} c_n z^n\right], \nonumber
\end{align}
where $c_n= \frac{2-\alpha}{2n+2-\alpha} \frac{1}{(2(n+1))!} (\nu-2)(\nu-3)\cdots (\nu-2n-1)$, for $n\geq 1$ and $c_0=1$.
An elementary computation yields
\[\frac{c_{n+1}}{c_n}= \frac{(n+1-\alpha/2) (n+1-\nu/2) (n+3/2-\nu/2) (n+1)}{ (n+2 -\alpha/2) (n+2) (n+1/2)}\frac{1}{n+1}.\]
Therefore (see \cite[page 10]{Bailey1935} or \cite[Equation 5.81, page 207]{GrahamKnuthPatashnik1994} for a more easily accessible reference), $\sum_{n\geq 0} c_n z^n= \HGG ( 1, 1-\frac{\nu}{2}, 1-\frac{\alpha}{2}, \frac{3-\nu}{2}, 1 ; 2, \frac{3}{2}, 2-\frac{\alpha}{2} ; z )$. Now, the series defining the generalized hypergeometric function $\mbox{}_pF_q ( \beta_1, \ldots, \beta_p ; \gamma_1, \ldots, \gamma_q ; z)$ for $p=q+1$ converges for all $z$ with $|z|<1$; for $z=1$ the series converges for $\sum_{i=1}^q\gamma_i > \sum_{j=1}^p \beta_j$, a condition that reduces to $\nu > -1$ in the present case. Hence
\[i_1^{\nu,\alpha}= \frac{ \nu(\nu-1) }{2-\alpha} \ \HGG ( 1, 1-\tfrac{\nu}{2}, 1-\tfrac{\alpha}{2}, \tfrac{3-\nu}{2}, 1 ; 2, \tfrac{3}{2}, 2-\tfrac{\alpha}{2} ; 1 ).\]

Finally consider $\tilde i^{\nu,\alpha}_1$.  
Due to the singularity at $u=0$, we compute
\[ \int_0^1( (1-u)^{\nu} - 1 ) u^{-1-\alpha} \ud u = \lim_{t \downarrow 0} \int_t^1 ( (1-u)^{\nu} - 1 ) u^{-1-\alpha} \ud u .\]
For the last integral, we get, for $\alpha > 0$ and $t >0$,
\[  \int_t^1 ( (1-u)^{\nu} - 1 ) u^{-1-\alpha} \ud u =  \int_t^1   (1-u)^{\nu}   u^{-1-\alpha} \ud u + \frac{1 -t^{-\alpha}}{\alpha} .\]
With the substitution $v = \frac{1-u}{1-t}$, the last integral becomes
\begin{align*}  \int_t^1   (1-u)^{\nu}   u^{-1-\alpha} \ud u  
& = (1-t)^{1+\nu} \int_0^1 v^{\nu} (1 - (1-t)v )^{-\alpha -1}  \ud v \\
& = \frac{(1-t)^{1+ \nu } }{1+\nu} \HG ( 1+\alpha , 1+\nu ; 2+\nu ; 1- t) ,\end{align*}
provided $\nu > -1$, by the integral representation for the Gauss hypergeometric function (equation 15.3.1 of~\cite[p.~558]{as}).
Now, by equation 15.3.6 from~\cite[p.\ 559]{as},
\begin{align*}
\HG ( 1+\alpha , 1+\nu ; 2+\nu ; 1- t) & = \frac{\Gamma (2+\nu) \Gamma (-\alpha)}{\Gamma (1-\alpha +\nu)}
\HG ( 1+\alpha , 1+\nu ; 1+\alpha ;  t) \\
& {} ~~~ + t^{-\alpha} \frac{1+\nu}{\alpha} \HG ( 1-\alpha + \nu , 1 ; 1+\alpha ; t ) \\
& = \frac{\Gamma (2+\nu) \Gamma (-\alpha)}{\Gamma (1-\alpha + \nu)} + \frac{1+\nu}{\alpha} t^{-\alpha} + O (t^{1-\alpha} ) ,\end{align*}
as $t \downarrow 0$, provided $\alpha \in (0,1)$. Combining these results we get
\begin{align*} \int_t^1 ( (1-u)^{\nu} - 1 ) u^{-1-\alpha} \ud u  = \frac{1}{\alpha} + \frac{ \Gamma (1+\nu) \Gamma (-\alpha)}{\Gamma (1-\alpha + \nu )} + O (t^{1-\alpha} ).\end{align*}
Letting $t \downarrow 0$ and using the fact that $- \alpha \Gamma (-\alpha) = \Gamma (1-\alpha)$ we obtain the given expression for $\tilde i_{1}^{\nu,\alpha}$.
\end{proof}

\begin{proof}[Proof of Lemma~\ref{lem:j-integrals}.]
We appeal to tables of standard integrals (Mellin transforms) from Section 6.4 of \cite{emot1}.
In particular, the given formulae for
$j_0^\alpha$, $j_2^\alpha$, and $\tilde j_1^\alpha$
follow from, respectively, equations 6.4.17, 6.4.20, and 6.4.19  of \cite[pp.\ 315--316]{emot1};
also used are formulas 6.3.7 and 6.3.8 from~\cite[p.\ 259]{as} and the fact that $\psi (1) = -\gamma$.
Lastly, for $j_1^\alpha$ we use the substitution $u^2 = s$ to obtain
\[ j_1^\alpha = \frac{1}{2} \int_0^1 \frac{\log (1-s)}{s^{1+(\alpha /2 )}} \ud s = \frac{1}{2} \tilde j_1^{\alpha/2} . \qedhere \]
\end{proof}

Finally, we need the following elementary fact.

\begin{lemma}
\label{lem:integral_approx}
Let $A \subseteq \RP$ be a Borel set, and let $f$ and $g$ be measurable functions from $A$ to $\R$.
Suppose that there exist constants $g_-$ and $g_+$ with $0 < g_- < g_+ < \infty$ such that
$g_- \leq g(u) \leq g_+$ for all $u \in A$. Then,
\[ \left| \int_A f(u) g(u) \ud u - g_- \int_A f(u) \ud u \right| \leq (g^+ - g^-) \int_A | f(u) | \ud u .\]
\end{lemma}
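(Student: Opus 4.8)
The plan is to reduce everything to a single integral and then apply the triangle inequality for integrals together with a pointwise bound on the integrand. Implicitly we assume $f \in L^1(A)$ (otherwise the right-hand side is $+\infty$ and there is nothing to prove, provided one interprets the left-hand side appropriately); under this assumption all three integrals appearing in the statement are finite, since $0 < g_- \le g(u) \le g_+$ forces $|f(u) g(u)| \le g_+ |f(u)|$.

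The key algebraic observation is that, by linearity of the integral,
\[
\int_A f(u) g(u) \ud u - g_- \int_A f(u) \ud u = \int_A f(u) \bigl( g(u) - g_- \bigr) \ud u .
\]
From the hypothesis $g_- \le g(u) \le g_+$ we get $0 \le g(u) - g_- \le g_+ - g_-$ for every $u \in A$, and hence the pointwise bound
\[
\bigl| f(u) \bigl( g(u) - g_- \bigr) \bigr| \le (g_+ - g_-) \, | f(u) | , \qquad u \in A .
\]
Applying the triangle inequality for integrals to the displayed identity and then integrating the pointwise bound yields
\[
\left| \int_A f(u) g(u) \ud u - g_- \int_A f(u) \ud u \right|
\le \int_A \bigl| f(u) \bigl( g(u) - g_- \bigr) \bigr| \ud u
\le (g_+ - g_-) \int_A | f(u) | \ud u ,
\]
which is exactly the claimed inequality.

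There is no real obstacle here: the statement is entirely elementary and the only point requiring a word of care is the measurability/integrability bookkeeping, namely that $f g$ and $f(g - g_-)$ are measurable (as products of measurable functions) and integrable whenever $f$ is, which follows from the uniform bounds on $g$. I would keep the written proof to essentially the three displays above.
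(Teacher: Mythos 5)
Your proof is correct and is essentially identical to the paper's own argument: both reduce to the identity $\int_A f(u) g(u)\,\ud u - g_-\int_A f(u)\,\ud u = \int_A f(u)(g(u)-g_-)\,\ud u$, apply the triangle inequality, and use the pointwise bound $0 \leq g(u)-g_- \leq g_+-g_-$, after noting that one may assume $\int_A |f(u)|\,\ud u < \infty$. Nothing to add.
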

\begin{proof}
It suffices to suppose that $\int_A | f(u) | \ud u < \infty$. Then $\int_A | f(u) g(u) | \ud u
\leq g_+ \int_A | f(u) | \ud u < \infty$,
and 
\begin{align*}
\left| \int_A f(u) g(u) \ud u - g_- \int_A f(u) \ud u \right| & = \left| \int_A  f(u) ( g(u) - g_- ) \ud u \right| \\
& \leq \int_A | f(u) | ( g (u) - g_- ) \ud u . \qedhere \end{align*}
\end{proof}

\end{document}